\numberwithin{equation}{section}   
\title{\bf Linear Quadratic Optimal Control Problems of Delayed Backward Stochastic Differential Equations
 \thanks{This work is financially supported by the National Key R\&D Program of China (2018YFB1305400), and the National Natural Science Foundations of China (11971266, 11571205, 11831010).}}
\author{\normalsize Weijun Meng\thanks{\it School of Mathematics, Shandong University, Jinan 250100, P.R. China, E-mail: 201611337@mail.sdu.edu.cn}, Jingtao Shi\thanks{\it Corresponding author, School of Mathematics, Shandong University, Jinan 250100, P.R. China, E-mail: shijingtao@sdu.edu.cn}}
\newtheorem{mypro}{Proposition}[section]
\newtheorem{mythm}{Theorem}[section]
\newtheorem{mylem}{Lemma}[section]
\newtheorem{Remark}{Remark}[section]
\begin{document}
\maketitle

\noindent{\bf Abstract:}\quad
This paper is concerned with a linear quadratic optimal control problem of delayed backward stochastic differential equations. An explicit representation is derived for the optimal control, which is a linear feedback of the entire past history and the future state trajectory in a short period of time. This is one of the major distinctive features of the delayed backward stochastic linear quadratic optimal control problem. To obtain the optimal feedback, a new class of delayed Riccati equations is introduced and the unique solvability of their solutions are discussed in detail.
\vspace{2mm}

\noindent{\bf Keywords:}\quad Linear quadratic optimal control; delayed backward stochastic differential equation; Riccati equation; time-advanced stochastic differential delayed equation

\vspace{2mm}

\noindent{\bf Mathematics Subject Classification:}\quad 93E20, 60H10, 34K50

\section{Introduction}

The stochastic control problems with delay have attracted more and more scholars' attention in recent years, due to their wide applications in various fields such as economics, engineering, information science, and networked communication. \emph{Stochastic differential delayed equations} (SDDEs, for short) are nice tools to described the dynamics of some natural and social phenomena (see Mohammed \cite{Mohammed84,Mohammed96}). Since then, massive research on related topics has become a desirable and serious endeavor among researchers in stochastic optimal control, differential games, mathematical finance and so on (see \cite{OS00,PY09,CW10,OSZ11,Yu12,CWY12,HLS12,HS12,DHQ13,WS17,ZX17,XSZ18,LWW18}). The forward SDDEs characterize the dynamic changes of state processes with given initial state trajectories. However, in the financial investment problems we usually prefer to study the dynamic changes of state processes with specified terminal states. Pardoux and Peng \cite{PP90} established the general theory of \emph{backward stochastic differential equations} (BSDEs, for short), whose solution is a pair of adapted processes when the terminal state is given. Since the BSDE itself is a nice dynamic structure, there have been abundant research results about the optimal control problems and differential games of BSDEs, for example \cite{Peng93,DZ99,LZ01,YJ08,HWX09,WY10,WY12,WXX18,LSX19}.

Recently, Delong and Imkeller \cite{DI10} introduced BSDEs with time delayed generators, which are generalization of BSDEs by adding the influence of time delay. Further, Delong \cite{Delong12} studied their applications in finance and insurance. Due to these, it is necessary and urgent to study the optimal control problems of BSDEs with time delayed generators. This is an interesting but challenging topic owing to the influence of the time delay and the backward structure on the controlled systems. A few research can be found in this kind of optimal control problems and their applications. Shi \cite{Shi11} considered the optimal control problem described by a kind of BSDE with time delayed generator and proved a sufficient maximum principle. Shi \cite{Shi13} generalized the above problem to the case driven by Brownian motion and Poisson random measure, by introducing a new class of \emph{time-advanced stochastic differential equations} (ASDEs for short) with jumps as the adjoint equation, and gave the sufficient maximum principle. Chen and Huang \cite{CH15} investigated a stochastic recursive delayed control problem and derived the necessary and sufficient conditions of the maximum principle, by introducing a kind of more general ASDEs as the adjoint equation. Wu and Wang \cite{WW15} focused on the optimal control problem of delayed BSDEs under partial information and the necessary and sufficient conditions of optimality are obtained. Shi and Wang \cite{SW16} studied a nonzero sum differential game of BSDEs with time-delayed generator and gave an Arrow's sufficient condition for the open-loop equilibrium point.

As one of the important special cases of optimal control problems, the linear-quadratic (LQ for short) optimal control problems have been a hot topic for a long period. However, to our knowledge, the literatures about LQ optimal control problems of delayed BSDEs are very scarce. Although the above literatures have discussed the LQ cases, either the state feedback expressions of the optimal controls are not given, or the controlled systems are very special. Hence this paper aims to study the general LQ optimal control problem of delayed BSDE, which we called {\it delayed backward stochastic linear-quadratic} (D-BSLQ for short) optimal control problem. The main contributions of this paper can be summarized in three aspects.
\begin{itemize}
\item Firstly, a general D-BSLQ optimal control problem is proposed and solved completely by the completion-of-squares technique. The optimal control is expressed as a linear feedback of the entire past history and the future state trajectory in a short period of time, which is different from the BSLQ optimal control problem without delay (Lim and Zhou \cite{LZ01}). Furthermore, the optimal cost is expressed by a delayed Riccati equation and a {\it delayed and time-advanced BSDE} (DABSDE for short). See Theorems 3.1, 3.2 in Section 3.
\item Secondly, it is interesting that a new class of {\it time-advanced SDDEs} (ASDDEs for short) is introduced to seek the state feedback expression of the optimal control, which has not been studied yet although it has considerable study value. See (\ref{linear ASDDE}) in Section 3.
\item Thirdly, the delayed Riccati equations mentioned above play a very important role in our analysis. Moreover, to the best of our knowledge, this class of delayed Riccati equations have not appeared in the previous literature. Thus, the existence and uniqueness of their solutions are discussed in detail in Section 4.
\end{itemize}

This paper is organized as follows. In Section 2, the D-BSLQ optimal control problem is formulated and some preliminary results on ASDDEs are given. In Section 3, the main results about the state feedback expression of the optimal control in this paper, are presented. Section 4 is devoted to the existence and uniqueness of solutions of certain Riccati equations. Finally the proofs of the main results are carried out in Section 5.

\section{Preliminaries}

Throughout the paper, $\mathbf{R}^{n\times m}$ is the Euclidean space of all $n\times m$ real matrices, $\mathbf{S}^n$ is the space of all $n\times n$ symmetric matrices, $\mathbf{S}^n_+$ is the subset of $\mathbf{S}^n$ consisting of positive semi-definite matrices, $\bar{\mathbf{S}}^n_+$ is the subset of $\mathbf{S}^n$ consisting of positive definite matrices. We simply write $\mathbf{R}^{n\times m}$ as $\mathbf{R}^n$ when $m=1$. The norm in $\mathbf{R}^n$ is denoted by $|\cdot|$ and the inner product is denoted by $\langle\cdot,\cdot\rangle$. The transpose of vectors or matrices is denoted by the superscript $^\top$. $\mathbf{R}^+=[0,\infty)$ and $\mathbf{N}^+$ is the set of all positive integers. $I$ is the identity matrix with appropriate dimension.

Suppose that $(\Omega,\mathcal{F},\{\mathcal{F}_t\},\mathbb{P})$ is a complete filtered probability space, $\{\mathcal{F}_t\}_{t\geq 0}$ is generated by the one-dimensional standard Brownian motion $\{W(t)\}_{t\geq0}$, and $\mathbb{E}$ denotes the mathematical expectation with respect to the probability $\mathbb{P}$. Let $T>0$ be the finite time duration and $\delta>0$ be a sufficiently small time delay parameter.

First we define the following spaces which will be used in this paper:
\begin{eqnarray*}\begin{aligned}
   &L^p([0,T];\mathbf{R}^{n\times n}):=\bigg\{\mathbf{R}^{n\times n}\mbox{-valued funciton }\phi(t);\ \int_0^T|\phi(t)|^pdt<\infty\bigg\},\\
   &L^{\infty}([0,T];\mathbf{R}^{n\times n}):=\bigg\{\mathbf{R}^{n\times n}\mbox{-valued funciton }\phi(t);\ \sup\limits_{0\leq t\leq T}|\phi(t)|dt<\infty\bigg\},\\
   &L^2_{\mathcal{F}_T}(\mathbf{R}^n):=\bigg\{\mathbf{R}^n\mbox{-valued }\mathcal{F}_T\mbox{-measurable random variable } \xi;\ \mathbb{E}|\xi|^2<\infty\bigg\},\\
   &C([0,T];\mathbf{R}^n):=\bigg\{\mathbf{R}^n\mbox{-valued continuous funciton }\phi(t);\ \sup\limits_{0\leq t\leq T}|\phi(t)|<\infty\bigg\},\\
   &L^2_\mathcal{F}([0,T];\mathbf{R}^n):=\bigg\{\mathbf{R}^n\mbox{-valued }\mathcal{F}_t\mbox{-adapted process }\phi(t);\ \mathbb{E}\int_0^T|\phi(t)|^2dt<\infty\bigg\},\\
   &L^2_\mathcal{F}(\Omega;C([0,T];\mathbf{R}^n)):=\bigg\{\mathbf{R}^n\mbox{-valued }\mathcal{F}_t\mbox{-adapted process }\phi(t);\
    \mathbb{E}\Big[\sup\limits_{0\leq t\leq T}|\phi(t)|^2\Big]<\infty\bigg\}.
\end{aligned}\end{eqnarray*}

\vspace{1mm}

Next we formulate the optimal control problem which will be studied in this paper. For given $s\in[0,T]$, let us consider the following controlled linear delayed BSDE:
\begin{eqnarray}\left\{\begin{aligned}\label{controlled delayed BSDE}
  -dY(t)=&\ \big[A(t)Y(t)+\bar{A}(t)Y(t-\delta)+B(t)Z(t)+\bar{B}(t)Z(t-\delta)+C(t)u(t)\\
         &+\bar{C}(t)u(t-\delta)\big]dt-Z(t)dW(t),\quad t\in[s,T],\\
    Y(T)=&\ \xi,\ Y(t)=\varphi(t),\ Z(t)=\psi(t),\ u(t)=\eta(t),\quad t\in[s-\delta,s),
\end{aligned}\right.\end{eqnarray}
along with the cost functional
\begin{eqnarray}\begin{aligned}\label{cost functional}
    J(s,\xi;u(\cdot))=&\ \mathbb{E}\bigg\{\big\langle\bar{G}Y(s-\delta),Y(s-\delta)\big\rangle+\big\langle GY(s),Y(s)\big\rangle+\int_s^T\Big[\big\langle Q(t)Y(t),Y(t)\big\rangle\\
                      &\quad+\big\langle\bar{Q}(t)Y(t-\delta),Y(t-\delta)\big\rangle+\big\langle R(t)Z(t),Z(t)\big\rangle+\big\langle\bar{R}(t)Z(t-\delta),Z(t-\delta)\big\rangle\\
                      &\quad+\big\langle N(t)u(t),u(t)\big\rangle+\big\langle\bar{N}(t)u(t-\delta),u(t-\delta)\big\rangle\Big]dt\bigg\},
\end{aligned}\end{eqnarray}
where $A(\cdot),\bar{A}(\cdot),B(\cdot),\bar{B}(\cdot),C(\cdot),\bar{C}(\cdot)$ are deterministic matrix-valued functions, $\xi$ is an $\mathcal{F}_T$-measurable random vector, $G,\bar{G}$ are symmetric matrices, $Q(\cdot),\bar{Q}(\cdot),R(\cdot),\bar{R}(\cdot),N(\cdot),\bar{N}(\cdot)$ are deterministic matrix-valued functions, with appropriate dimensions.

The admissible control set is defined as follows:
$$\mathcal{U}[s,T]:=\Big\{u:[s,T]\times\Omega\rightarrow\mathbf{R}^d\big|u(\cdot) \mbox{ is an }\mathcal{F}_t\mbox{-predictable process};\ \mathbb{E}\int_s^T|u(t)|^2dt<\infty. \Big\}.$$

The LQ optimal control problem of delayed BSDE, which we call {\it delayed backward stochastic LQ optimal control problem}, can be stated as follows:

\textbf{Problem (D-BSLQ).} For any $s\in[0,T]$, $\xi\in L^2_{\mathcal{F}_T}(\mathbf{R}^n)$, to find a $u^*(\cdot)\in\mathcal{U}[s,T]$ such that
\begin{equation}\label{D-BSLQ}
  J(s,\xi;u^*(\cdot))=\inf\limits_{u(\cdot)\in\mathcal{U}[s,T]}J(s,\xi;u(\cdot)):=V(s,\xi).
\end{equation}
Any $u^*(\cdot)\in \mathcal{U}[s,T]$ that achieves the above infimum is called an {\it optimal control} and the corresponding solution $(Y^*(\cdot),Z^*(\cdot))$ is called the {\it optimal state trajectory}. The function $V(\cdot,\cdot)$ is called the {\it value function} of \textbf{Problem (D-BSLQ)}.

\vspace{1mm}

Now we introduce the following assumptions that will be in force throughout the paper.
\textbf{(A1)} The coefficients of the state equation (\ref{controlled delayed BSDE}) satisfy the following assumptions:
\begin{equation*}
  A(\cdot),\bar{A}(\cdot),B(\cdot),\bar{B}(\cdot)\in L^\infty([0,T];\mathbf{R}^{n\times n}),\
  C(\cdot),\bar{C}(\cdot)\in L^\infty([0,T];\mathbf{R}^{n\times d}).
\end{equation*}
\textbf{(A2)} The initial trajectory of the state equation (\ref{controlled delayed BSDE}) satisfies $\varphi(\cdot),\psi(\cdot),\eta(\cdot)\in L^2_{\mathcal{F}}([s-\delta,s];\mathbf{R}^n)$.
\textbf{(A3)} The weight coefficients of the cost functional (\ref{cost functional}) satisfy the following assumptions:
\begin{equation*}\left\{\begin{aligned}
  &G,\bar{G}\in \mathbf{S}^n,\quad Q(\cdot),\bar{Q}(\cdot)\in L^\infty([0,T];\mathbf{S}^{n}),\\
  &R(\cdot),\bar{R}(\cdot)\in L^\infty([0,T];\mathbf{S}^n),\quad N(\cdot),\bar{N}(\cdot)\in L^\infty([0,T];\mathbf{S}^d),\\
\end{aligned}\right.\end{equation*}
and there exists a constant $\alpha>0$ such that
\begin{equation*}
  G\geq 0,\ Q(t)+\bar{Q}(t+\delta)\geq 0,\ R(t)+\bar{R}(t+\delta)\geq 0,\ N(t)+\bar{N}(t+\delta)\geq \alpha I,\ a.e.\ t\in[s-\delta,T].
\end{equation*}

\vspace{1mm}

Next we give a result to guarantee the well-posedness of the state equation (\ref{controlled delayed BSDE}).

\begin{mythm}\label{thm2.1}
Let \textbf{(A1)}, \textbf{(A2)} hold. Then for any $(\xi,u(\cdot))\in L^2_{\mathcal{F}_T}(\mathbf{R}^n)\times\mathcal{U}[s,T]$, the state equation (\ref{controlled delayed BSDE}) admits a unique adapted solution $(Y(\cdot),Z(\cdot))\in L^2_{\mathcal{F}}(\Omega;C([s,T];\mathbf{R}^n))\times L^2_{\mathcal{F}}([s,T];\mathbf{R}^n).$ Moreover, there exists a constant $K>0$, independent of $\xi$ and $u(\cdot)$ such that
\begin{equation}\label{estimate}\begin{aligned}
  &\mathbb{E}\bigg[\sup\limits_{s\leq t\leq T}|Y(t)|^2+\int_s^T|Z(t)|^2dt\bigg]\\
  &\leq K\mathbb{E}\bigg[|\xi|^2+\int_s^T|u(t)|^2dt+\int_{s-\delta}^{s}\big(|\varphi(t)|^2+|\psi(t)|^2+|\eta(t)|^2\big)dt\bigg].
\end{aligned}\end{equation}
\end{mythm}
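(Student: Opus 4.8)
The plan is to realize \eqref{controlled delayed BSDE} as a fixed point of a map built from classical (non-delayed) linear BSDEs, and then to read off uniqueness and the estimate \eqref{estimate} from the associated energy identity. Observe first that the control $u(\cdot)$ is given, so by \textbf{(A1)}, \textbf{(A2)} the terms $C(t)u(t)+\bar C(t)u(t-\delta)$ form a known inhomogeneity $f_0(\cdot)\in L^2_{\mathcal F}([s,T];\mathbf R^n)$; the only genuinely delayed unknowns are $Y(t-\delta)$ and $Z(t-\delta)$.

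First I would introduce the Hilbert space $\mathcal H:=L^2_{\mathcal F}([s,T];\mathbf R^n)\times L^2_{\mathcal F}([s,T];\mathbf R^n)$ and define a map $\Gamma$ on $\mathcal H$. Given $(y,z)\in\mathcal H$, extend it to $[s-\delta,T]$ by the initial data $\varphi,\psi$ on $[s-\delta,s)$, freeze the delayed slots, and let $(Y,Z)=\Gamma(y,z)$ be the unique solution of the standard linear BSDE $-dY=[A Y+B Z+\bar A y(\cdot-\delta)+\bar B z(\cdot-\delta)+f_0]dt-Z\,dW$, $Y(T)=\xi$. Since $A,B$ are bounded and the forcing $\bar A y(\cdot-\delta)+\bar B z(\cdot-\delta)+f_0$ lies in $L^2_{\mathcal F}([s,T];\mathbf R^n)$, Pardoux--Peng \cite{PP90} give a unique $(Y,Z)\in\mathcal H$, so $\Gamma$ is well defined, and a fixed point of $\Gamma$ is precisely a solution of \eqref{controlled delayed BSDE}.

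The heart of the matter is to show $\Gamma$ is a contraction in a suitable weighted norm. For two inputs with the same initial data, I would apply It\^o's formula to $e^{\beta t}|\Delta Y(t)|^2$ for the difference $(\Delta Y,\Delta Z)=\Gamma(y_1,z_1)-\Gamma(y_2,z_2)$, which has zero terminal value, take expectations, and use the boundedness of $A,B$ together with Young's inequality (with a free parameter $\gamma>0$) to absorb the $|\Delta Y|^2$ and part of the $|\Delta Z|^2$ terms into the left-hand side by taking $\beta$ large. The delayed forcing contributes a term of order $\tfrac1\gamma\,\mathbb E\int_s^T e^{\beta r}\big(|\Delta y(r-\delta)|^2+|\Delta z(r-\delta)|^2\big)dr$; the change of variables $r-\delta\mapsto u$ turns this into $e^{\beta\delta}\,\mathbb E\int_{s-\delta}^{T-\delta}e^{\beta u}(\cdots)du$, and since the input difference vanishes on $[s-\delta,s)$ it is bounded by $e^{\beta\delta}$ times the weighted $\mathcal H$-norm of $(\Delta y,\Delta z)$. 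Optimizing $\gamma,\beta$ yields a contraction constant of order $L^2\delta$, with $L$ a common bound for the coefficients. This is the step where the standing assumption that $\delta$ is sufficiently small is essential, and it is the main obstacle: the difficulty is genuine, since the backward structure of the equation runs against the backward-pointing delay and no method-of-steps decoupling is available (even on two adjacent $\delta$-blocks the junction value must be found from an affine fixed-point equation whose solvability again requires $\delta$ small). The Banach fixed point theorem then yields existence and uniqueness in $\mathcal H$.

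Finally, I would establish \eqref{estimate}. Applying the same energy computation to the genuine solution of \eqref{controlled delayed BSDE}, now retaining the data $\xi$, $f_0$ and the delayed-state forcing, and invoking Gronwall's inequality, bounds $\mathbb E\int_s^T(|Y|^2+|Z|^2)dt$ by the right-hand side of \eqref{estimate}; here the delayed terms only feel the initial trajectory on $[s-\delta,s)$, which is exactly why the $\int_{s-\delta}^s(|\varphi|^2+|\psi|^2+|\eta|^2)dt$ contribution appears. Upgrading $\mathbb E\int_s^T|Y|^2dt$ to $\mathbb E\big[\sup_{s\le t\le T}|Y|^2\big]$ is routine via the Burkholder--Davis--Gundy inequality applied to the stochastic integral of $Z$, which simultaneously produces a continuous version of $Y(\cdot)$, placing $(Y,Z)$ in $L^2_{\mathcal F}(\Omega;C([s,T];\mathbf R^n))\times L^2_{\mathcal F}([s,T];\mathbf R^n)$ as claimed; uniqueness is already contained in the contraction above.
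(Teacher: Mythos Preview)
Your proposal is correct and follows essentially the same contraction-mapping route as the paper, exploiting the smallness of $\delta$ through a weighted norm and then upgrading to the sup norm via BDG. The only cosmetic differences are that the paper freezes both the current and the delayed values of $(y,z)$ (not just the delayed ones), uses the weight $e^{\beta h(t)}$ with $h(t)=\int_s^t(|A|+|\bar A|+|B|^2+|\bar B|^2)\,dr$ while working directly in the sup-norm space, and reads off the a priori estimate by comparing the fixed point with $\mathcal T(0,0)$ rather than redoing the energy computation.
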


\begin{proof}
We try to use the contraction mapping theorem to prove the result. For any $\beta\in\mathbf{R}$, let
\begin{equation*}
  \mathcal{M}_\beta[s,T]:=L_\mathcal{F}^2(\Omega;C([s,T];\mathbf{R}^n))\times L^2_{F}([s,T];\mathbf{R}^n),
\end{equation*}
equipped with the norm
\begin{equation*}
   \big|\big|(Y,Z)\big|\big|_{\mathcal{M}_\beta[s,T]}:=\bigg(\mathbb{E}\Big[\sup\limits_{s\leq t\leq T}|Y(t)|^2e^{\beta h(t)}\Big]
   +\mathbb{E}\int_s^T|Z(t)|^2e^{\beta h(t)}dt\bigg)^{\frac{1}{2}},
\end{equation*}
where $h(t):=\int_s^t\big[|A(r)|+|\bar{A}(r)|+|B(r)|^2+|\bar{B}(r)|^2\big]dr,\ t\in[s,T].$ We define the mapping $\mathcal{T}:\mathcal{M}_{\beta}[s,T]\rightarrow\mathcal{M}_{\beta}[s,T]$, apparently it is well-defined. In fact, for any $(y(\cdot),z(\cdot))\in\mathcal{M}_\beta[s,T]$ and $y(t)=\varphi(t),z(t)=\psi(t),\ s-\delta\leq t<s$, consider the following BSDE:
\begin{eqnarray*}\left\{\begin{aligned}
  -dY(t)=&\ \big[A(t)y(t)+\bar{A}(t)y(t-\delta)+B(t)z(t)+\bar{B}(t)z(t-\delta)+C(t)u(t)\\
         &\ +\bar{C}(t)u(t-\delta)\big]dt-Z(t)dW(t),\quad t\in[s,T],\\
    Y(T)=&\ \xi,\ Y(t)=\varphi(t),\ Z(t)=\psi(t),\ u(t)=\eta(t),\quad t\in[s-\delta,s),
\end{aligned}\right.\end{eqnarray*}
then by Pardoux and Peng \cite{PP90}, $(Y(\cdot),Z(\cdot))=\mathcal{T}(y(\cdot),z(\cdot))\in\mathcal{M}_{\beta}[s,T]$.

For any $(y_1(\cdot),z_1(\cdot)),(y_2(\cdot),z_2(\cdot))\in\mathcal{M}_{\beta}[s,T],$ and $y_1(t)=y_2(t)=\varphi(t),$ $z_1(t)=z_2(t)=\psi(t)$, $t\in[s-\delta,s)$, denote
\begin{equation*}\left\{\begin{aligned}
  &(Y_1(\cdot),Z_1(\cdot))=\mathcal{T}(y_1(\cdot),z_1(\cdot)),\quad (Y_2(\cdot),Z_2(\cdot))=\mathcal{T}(y_2(\cdot),z_2(\cdot)),\\
  &\hat{Y}(\cdot)=Y_1(\cdot)-Y_2(\cdot),\quad \hat{Z}(\cdot)=Z_1(\cdot)-Z_2(\cdot),\quad
  \hat{y}(\cdot)=y_1(\cdot)-y_2(\cdot),\quad \hat{z}(\cdot)=z_1(\cdot)-z_2(\cdot).
\end{aligned}\right.\end{equation*}
Then we have
\begin{equation*}\left\{\begin{aligned}
  -d\hat{Y}(t)&=\big[A(t)\hat{y}(t)+\bar{A}(t)\hat{y}(t-\delta)+B(t)\hat{z}(t)+\bar{B}(t)\hat{z}(t-\delta)\big]dt\\
              &\quad-\hat{Z}(t)dW(t),\quad t\in[s,T],\\
    \hat{Y}(T)&=0,\ \hat{Y}(t)=0,\ \hat{Z}(t)=0,\ t\in[s-\delta,s).
\end{aligned}\right.\end{equation*}
Applying It\^o's formula to $e^{\beta h(t)}|\hat{Y}(t)|^2$, we obtain
\begin{equation*}\begin{aligned}
  -e^{\beta h(t)}|\hat{Y}(t)|^2=&\int_t^Te^{\beta h(r)}\Big[\beta|\hat{Y}(r)|^2\big(|A(r)|+|\bar{A}(r)|+|B(r)|^2+|\bar{B}(r)|^2\big)
   -2\big\langle\hat{Y}(r),A(r)\hat{y}(r)\big\rangle\\
  &\ -2\big\langle\hat{Y}(r),\bar{A}(r)\hat{y}(r-\delta)\big\rangle-2\big\langle\hat{Y}(r),B(r)\hat{z}(r)\big\rangle-2\big\langle\hat{Y}(r),\bar{B}(r)\hat{z}(r-\delta)\big\rangle\\
  &\ +|\hat{Z}(r)|^2\Big]dr+2\int_t^Te^{\beta h(r)}\big\langle\hat{Y}(r),\hat{Z}(r)\big\rangle dW(r).
\end{aligned}\end{equation*}
Then we deduce
\begin{eqnarray}\begin{aligned}\label{eq2.5}
     &e^{\beta h(t)}|\hat{Y}(t)|^2+\int_t^T\beta e^{\beta h(r)}|\hat{Y}(r)|^2\big(|A(r)|+|\bar{A}(r)|+|B(r)|^2+|\bar{B}(r)|^2\big)dr\\
     &+\int_t^Te^{\beta h(r)}|\hat{Z}(r)|^2dr+2\int_t^Te^{\beta h(r)}\big\langle\hat{Y}(r),\hat{Z}(r)\big\rangle dW(r)\\
\leq &\int_t^T\beta e^{\beta h(r)}\big(|A(r)|+|\bar{A}(r)|+|B(r)|^2+|\bar{B}(r)|^2\big)|\hat{Y}(r)|^2dr\\
     &+\beta^{-1}\int_t^Te^{\beta h(r)}\Big[|A(r)||\hat{y}(r)|^2+|\bar{A}(r)||\hat{y}(r-\delta)|^2+|\hat{z}(r)|^2+|\hat{z}(r-\delta)|^2\Big]dr\\
\leq &\int_t^T\beta e^{\beta h(r)}\big(|A(r)|+|\bar{A}(r)|+|B(r)|^2+|\bar{B}(r)|^2\big)|\hat{Y}(r)|^2dr\\
     &+\beta^{-1}\int_t^Te^{\beta h(r)}\big(|A(r)|+e^{\beta(h(r+\delta)-h(r))}|\bar{A}(r+\delta)|\big)|\hat{y}(r)|^2dr
      +\beta^{-1}\int_{t-\delta}^te^{\beta h(r+\delta)}|\hat{z}(r)|^2dr\\
     &+\beta^{-1}\int_{t-\delta}^te^{\beta h(r+\delta)}|\bar{A}(r+\delta)||\hat{y}(r)|^2dr
      +\beta^{-1}\int_t^Te^{\beta h(r)}\big(1+e^{\beta(h(r+\delta)-h(r))}\big)|\hat{z}(r)|^2dr.
\end{aligned}\end{eqnarray}
And thus we derive
\begin{equation}\begin{aligned}\label{eq2.6}
     &\mathbb{E}\int_s^Te^{\beta h(r)}|\hat{Z}(r)|^2dr
\leq \beta^{-1}\mathbb{E}\int_s^Te^{\beta h(r)}\big(|A(r)|+e^{\beta[h(r+\delta)-h(r)]}|\bar{A}(r+\delta)|\big)|\hat{y}(r)|^2dr\\
     &+\beta^{-1}\mathbb{E}\int_s^Te^{\beta h(r)}\big(1+e^{\beta[h(r+\delta)-h(r)]}\big)|\hat{z}(r)|^2dr\\
\leq &\ \beta^{-1}\mathbb{E}\bigg[\sup\limits_{s\leq r\leq T}e^{\beta h(r)}|\hat{y}(r)|^2\int_s^T\big(|A(r)|+e^{\beta[h(r+\delta)-h(r)]}|\bar{A}(r+\delta)|\big)dr\bigg]\\
     &+\beta^{-1}\Big(1+\sup\limits_{s\leq r\leq T}e^{\beta[h(r+\delta)-h(r)]}\Big)\mathbb{E}\int_s^Te^{\beta h(r)}|\hat{z}(r)|^2dr
\leq \tilde{\beta}\big|\big|(\hat{y},\hat{z})\big|\big|^2_{\mathcal{M}_\beta[s,T]},
\end{aligned}\end{equation}
where $\tilde{\beta}:=\beta^{-1}\big(1+\sup\limits_{s\leq r\leq T}e^{\beta[h(r+\delta)-h(r)]}\big)\big[1+\int_s^T\big(|A(r)|+|\bar{A}(r+\delta)|\big)dr\big]$.

On the other hand, by Burkholder-Davis-Gundy's inequality, we have
\begin{equation}\begin{aligned}\label{eq2.7}
     &\mathbb{E}\bigg[\sup\limits_{s\leq t\leq T}\bigg|\int_t^Te^{\beta h(r)}\big\langle\hat{Y}(r),\hat{Z}(r)\big\rangle dW(r)\bigg|\bigg]
\leq C\mathbb{E}\bigg(\int_s^Te^{2\beta h(r)}|\hat{Y}(r)|^2|\hat{Z}(r)|^2dr\bigg)^\frac{1}{2}\\
\leq &\ C\mathbb{E}\bigg[\sup\limits_{s\leq t\leq T}e^{\frac{1}{2}\beta h(r)}|\hat{Y}(r)|\bigg(\int_s^Te^{\beta h(r)}|\hat{Z}(r)|^2dr\bigg)^\frac{1}{2}\bigg]\\
\leq &\ \frac{1}{4}\mathbb{E}\bigg[\sup\limits_{s\leq t\leq T}e^{\beta h(r)}|\hat{Y}(r)|^2\bigg]+C^2\mathbb{E}\int_s^Te^{\beta h(r)}|\hat{Z}(r)|^2dr,
\end{aligned}\end{equation}
where $C$ is a constant. Combining (\ref{eq2.5}), (\ref{eq2.6}) and (\ref{eq2.7}), we obtain
\begin{equation}\begin{aligned}\label{eq2.8}
     &\mathbb{E}\bigg[\sup\limits_{s\leq t\leq T}e^{\beta h(t)}|\hat{Y}(t)|^2\bigg]
\leq 2\tilde{\beta}\big|\big|(\hat{y},\hat{z})\big|\big|^2_{\mathcal{M}_\beta[s,T]}+4C^2\mathbb{E}\int_s^Te^{\beta h(r)}|\hat{Z}(r)|^2dr\\
     &+2\beta^{-1}\mathbb{E}\bigg[\sup\limits_{s\leq t\leq T}\int_{t-\delta}^te^{\beta h(r+\delta)}|\bar{A}(r+\delta)||\hat{y}(r)|^2dr\bigg]
      +2\beta^{-1}\mathbb{E}\bigg[\sup\limits_{s\leq t\leq T}\int_{t-\delta}^te^{\beta h(r+\delta)}|\hat{z}(r)|^2dr\bigg]\\
\leq &\ 6\tilde{\beta}\big|\big|(\hat{y},\hat{z})\big|\big|^2_{\mathcal{M}_\beta[s,T]}+4C^2\mathbb{E}\int_s^Te^{\beta h(r)}|\hat{Z}(r)|^2dr.
\end{aligned}\end{equation}
Finally, by (\ref{eq2.6}) and (\ref{eq2.8}), we deduce
\begin{equation}\begin{aligned}\label{eq2.9}
\big|\big|(\hat{Y},\hat{Z})\big|\big|^2_{\mathcal{M}_\beta[s,T]}\leq\tilde{\beta}(7+4C^2)\big|\big|(\hat{y},\hat{z})\big|\big|^2_{\mathcal{M}_\beta[s,T]}.
\end{aligned}\end{equation}
Thus as long as $\tilde{\beta}(7+4C^2)<1$, then $\mathcal{T}$ is a contraction mapping. Since \textbf{(A1)} holds and $\delta$ is sufficiently small, we can indeed choose some sufficiently large $\beta$ such that (\ref{eq2.9}) holds.

As for the estimate (\ref{estimate}), let $(Y_0(\cdot),Z_0(\cdot))=\mathcal{T}(0,0),$ then
\begin{equation*}\begin{aligned}
  Y_0(t)=&\ \xi+\int_{t-\delta}^s\bar{A}(r+\delta)\varphi(r)dr+\int_{t-\delta}^s\bar{B}(r+\delta)\psi(r)dr\\
         &+\int_{t}^T[C(r)u(r)+\bar{C}(r)u(r-\delta)]dr-\int_t^TZ_0(r)dW(r).
\end{aligned}\end{equation*}
Similarly, for some $\tilde{\beta}^\prime\in(0,1)$, we have
\begin{equation*}
  \big|\big|(\hat{Y},\hat{Z})-(\hat{Y_0},\hat{Z_0})\big|\big|^2_{\mathcal{M}_\beta[s,T]}\leq\tilde{\beta}^\prime\big|\big|(\hat{Y},\hat{Z})\big|\big|^2_{\mathcal{M}_\beta[s,T]}.
\end{equation*}
Hence we get
\begin{equation*}
  \big|\big|(\hat{Y},\hat{Z})\big|\big|^2_{\mathcal{M}_\beta[s,T]}\leq K\big|\big|(\hat{Y_0},\hat{Z_0})\big|\big|^2_{\mathcal{M}_\beta[s,T]}.
\end{equation*}
It follows that
\begin{equation*}\begin{aligned}
     &\big|\big|(\hat{Y},\hat{Z})\big|\big|^2_{\mathcal{M}_0[s,T]}
      \leq K\big|\big|(\hat{Y_0},\hat{Z_0})\big|\big|^2_{\mathcal{M}_\beta[s,T]}\leq K\big|\big|(\hat{Y_0},\hat{Z_0})\big|\big|^2_{\mathcal{M}_0[s,T]}\\
\leq &\ K\mathbb{E}\bigg[|\xi|^2+\bigg(\int_{s-\delta}^s|\bar{A}(r+\delta)\varphi(r)|dr+\int_{s-\delta}^s|\bar{B}(r+\delta)\psi(r)|dr\\
     &\qquad+\int_s^T|C(r)u(r)+\bar{C}(r)u(r-\delta)|dr\bigg)^2\bigg]\\
\leq &\ K\mathbb{E}\bigg[|\xi|^2+\int_s^T|u(t)|^2dt+\int_{s-\delta}^{s}\big(|\varphi(t)|^2+|\psi(t)|^2+|\eta(t)|^2\big)dt\bigg],
\end{aligned}\end{equation*}
where $K>0$ is a generic constant. The proof is complete.
\end{proof}

\begin{Remark}\label{re2.1}
From Theorem \ref{thm2.1}, we can easily know that under \textbf{(A1)-(A2)}, for any $(s,\xi)\in[0,T]\times L^2_{\mathcal{F}_T}(\mathbf{R}^n)$ and $u(\cdot)\in\mathcal{U}[s,T]$, the cost functional (\ref{cost functional}) is well-posed and hence \textbf{Problem (D-BSLQ)} makes sense.
\end{Remark}

\begin{Remark}\label{re2.2}
From the proof of Theorem \ref{thm2.1}, the conditions imposed on the coefficients of (\ref{controlled delayed BSDE}) can be relaxed. For example, when $ A(\cdot),\bar{A}(\cdot)\in L^1([0,T];\mathbf{R}^{n\times n})$, $B(\cdot),\bar{B}(\cdot)\in L^2([0,T];\mathbf{R}^{n\times n})$, $C(\cdot),\bar{C}(\cdot)\in L^2([0,T];\mathbf{R}^{n\times d})$, (\ref{controlled delayed BSDE}) is still well-posed.
\end{Remark}

In the last part of this section we introduce a new class of ASDDEs.

For any given $s\in[0,T]$, consider the following ASDDE:
\begin{eqnarray}\left\{\begin{aligned}\label{ASDDE}
   dX(t)=&\ b(t,X(t),X(t-\delta_1(t)),X(t+\delta_2(t)))dt\\
         &+\sigma(t,X(t),X(t-\delta_1(t)),X(t+\delta_2(t)))dW(t),\quad t\in [s,T],\\
    X(s)=&\ \zeta,\ X(t)=\alpha_1(t),\quad t\in[s-K_1,s),\ X(t)=\alpha_2(t),\quad t\in(T,T+K_2],
\end{aligned}\right.\end{eqnarray}
where $\delta_1(\cdot),\delta_2(\cdot)$ are $\mathbf{R}^+$-valued continuous functions defined on $[s,T]$ and $s-K_1\leq t-\delta_1(t)\leq T$, $s\leq t+\delta_2(t)\leq K_2+T$ for all $t\in[s,T]$. Moreover, we suppose the following assumptions hold.

(\textbf{H1}) There exists a constant $L>0$ such that
\begin{equation*}\begin{aligned}
&\int_s^Tg(t-\delta_1(t))dt\leq L\int_{s-K_1}^{T}g(t)dt,\\
&\int_s^Tg(t+\delta_2(t))dt\leq L\int_s^{T+K_2}g(t)dt,
\end{aligned}\end{equation*}
for any nonnegative integrable function $g(\cdot)$.

(\textbf{H2}) Let $b(t,\omega,x,\theta,\vartheta):[0,T]\times\Omega\times\mathbf{R}^n\times L^2_{\mathcal{F}_{r_1}}(\mathbf{R}^n)\times L^2_{\mathcal{F}_{r_2}}(\mathbf{R}^n)\rightarrow L^2_{\mathcal{F}_t}(\mathbf{R}^n),\sigma(t,\omega,x,\theta,\vartheta):[0,T]\times\Omega\times\mathbf{R}^n\times L^2_{\mathcal{F}_{r_1}}(\mathbf{R}^n)\times L^2_{\mathcal{F}_{r_2}}(\mathbf{R}^n)\rightarrow L^2_{\mathcal{F}_t}(\mathbf{R}^n)$, where $r_1\in[t-K_1,T]$, $r_2\in[t,T+K_2]$,  satisfy:
\begin{equation*}\begin{aligned}
&\big|b(t,x,\theta_{r_1},\vartheta_{r_2})-b(t,\bar{x},\bar{\theta}_{r_1},\bar{\vartheta}_{r_2})\big|
+\big|\sigma(t,x,\theta_{r_1},\vartheta_{r_2})-\sigma(t,\bar{x},\bar{\theta}_{r_1},\bar{\vartheta}_{r_2})\big|\\
&\leq L\big(|x-\bar{x}|+|\theta_{r_1}-\bar{\theta}_{r_1}|+\mathbb{E}^{\mathcal{F}_t}\big[|\vartheta_{r_2}-\bar{\vartheta_{r_2}}|\big]\big),
\end{aligned}\end{equation*}
for any $t\in[s,T],x,\bar{x}\in\mathbf{R}^n,\theta,\bar{\theta}\in L_{\mathcal{F}}^2([t-K_1,T];\mathbf{R}^n),\vartheta,\bar{\vartheta}\in L_{\mathcal{F}}^2([t,T+K_2];\mathbf{R}^n)$, and for the above constant $L>0$;

$(\textbf{H3})\ \mathbb{E}\int_s^T\big(|b(t,0,0,0)|^2+|\sigma(t,0,0,0)|^2\big)dt<+\infty$.

\begin{Remark}\label{re2.3}
The fact that $b(t,\cdot,\cdot,\cdot)$, $\sigma(t,\cdot,\cdot,\cdot)$ are $\mathcal{F}_t$-measurable guarantees the adaptability of the solution to the ASDDE (\ref{ASDDE}).
\end{Remark}

Then we have the following result.
\begin{mythm}\label{thm2.2}
Let (\textbf{H1})-(\textbf{H3}) hold, suppose $K_2>0$ is sufficiently small, then for any given initial and terminal conditions $\alpha_1(\cdot)\in L^2_{\mathcal{F}}([s-K_1,s];\mathbf{R}^n)$, $\alpha_2(\cdot)\in L^2_{\mathcal{F}}([T,T+K_2];\mathbf{R}^n)$, and for any $\zeta\in L^2_{\mathcal{F}_s}(\mathbf{R}^n)$, the ASDDE (\ref{ASDDE}) has a unique solution $X(\cdot)\in L^2_{\mathcal{F}}(\Omega;C([s,T];\mathbf{R}^n))$.
\end{mythm}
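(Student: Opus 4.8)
The plan is to prove existence and uniqueness by the Banach contraction mapping theorem, in the same spirit as the proof of Theorem \ref{thm2.1}. I would work in the space $\mathcal{N}[s,T]:=L^2_{\mathcal{F}}(\Omega;C([s,T];\mathbf{R}^n))$, extending each element to $[s-K_1,T+K_2]$ by the prescribed boundary data, i.e. $x(t)=\alpha_1(t)$ on $[s-K_1,s)$ and $x(t)=\alpha_2(t)$ on $(T,T+K_2]$. Define a map $\mathcal{T}:\mathcal{N}[s,T]\to\mathcal{N}[s,T]$ as follows: given $x(\cdot)$, freeze the delayed argument $x(t-\delta_1(t))$ and the advanced argument $x(t+\delta_2(t))$ inside $b$ and $\sigma$, and let $X(\cdot)=\mathcal{T}(x(\cdot))$ be the solution of the resulting It\^o SDE
\begin{equation*}
dX(t)=b\big(t,X(t),x(t-\delta_1(t)),x(t+\delta_2(t))\big)dt+\sigma\big(t,X(t),x(t-\delta_1(t)),x(t+\delta_2(t))\big)dW(t)
\end{equation*}
on $[s,T]$ with $X(s)=\zeta$. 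A fixed point of $\mathcal{T}$ is exactly a solution of (\ref{ASDDE}).

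First I would check that $\mathcal{T}$ is well defined. The key point is that, although the advanced argument $x(t+\delta_2(t))$ is only $\mathcal{F}_{t+\delta_2(t)}$-measurable, assumption (\textbf{H2}) requires $b(t,\cdot,\cdot,\vartheta)$ and $\sigma(t,\cdot,\cdot,\vartheta)$ to take values in $L^2_{\mathcal{F}_t}(\mathbf{R}^n)$; hence the frozen coefficients are $\mathcal{F}_t$-adapted (this is the content of Remark \ref{re2.3}), Lipschitz in the current state $X(t)$ by (\textbf{H2}), and have square-integrable free terms by (\textbf{H1}) and (\textbf{H3}). Classical SDE theory then yields a unique adapted $X(\cdot)\in\mathcal{N}[s,T]$, so $\mathcal{T}$ maps $\mathcal{N}[s,T]$ into itself.

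Next I would establish the contraction estimate. For $x_1,x_2$ sharing the same boundary data, put $X_i=\mathcal{T}(x_i)$, $\hat{X}=X_1-X_2$, $\hat{x}=x_1-x_2$, so that $\hat{X}(s)=0$ and $\hat{x}\equiv0$ on $[s-K_1,s)\cup(T,T+K_2]$. Applying It\^o's formula to an exponentially weighted version of $|\hat{X}(t)|^2$ (in the spirit of the weight used in the proof of Theorem \ref{thm2.1}), together with the Burkholder--Davis--Gundy and Gronwall inequalities, the contribution of the current-state difference $\hat{X}$ is absorbed. The delayed term is controlled by the first inequality in (\textbf{H1}), which transfers $\int_s^T|\hat{x}(t-\delta_1(t))|^2dt$ to an integral of $|\hat{x}|^2$ over $[s-K_1,T]$, i.e. over $[s,T]$ since $\hat{x}$ vanishes on $[s-K_1,s)$. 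For the advanced term I would first use Jensen's inequality and the tower property to pass from $\big(\mathbb{E}^{\mathcal{F}_t}[|\hat{x}(t+\delta_2(t))|]\big)^2$ to $\mathbb{E}^{\mathcal{F}_t}[|\hat{x}(t+\delta_2(t))|^2]$ and then to drop the conditional expectation under the time integral, after which the second inequality in (\textbf{H1}) transfers $\int_s^T|\hat{x}(t+\delta_2(t))|^2dt$ to an integral over $[s,T+K_2]$, again reducing to $[s,T]$.

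The main obstacle is the anticipating term $x(t+\delta_2(t))$: its coupling propagates forward in time, so---unlike the retarded delay term, which is harmless for a forward equation---it cannot be tamed by the weight or by Gronwall's inequality alone. This is precisely where the hypothesis that $K_2$ is sufficiently small enters: a short anticipation horizon keeps the anticipation factors under control while the weight parameter supplies the decay needed to force the resulting contraction constant below $1$, exactly as the smallness of the delay $\delta$ was exploited in the proof of Theorem \ref{thm2.1}. Once $\mathcal{T}$ is shown to be a contraction on $\mathcal{N}[s,T]$, the Banach fixed point theorem provides a unique fixed point, which is the unique solution $X(\cdot)\in L^2_{\mathcal{F}}(\Omega;C([s,T];\mathbf{R}^n))$ of the ASDDE (\ref{ASDDE}).
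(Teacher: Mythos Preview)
Your proposal is correct and follows essentially the same route as the paper: define the same freezing map $\mathcal{T}$, apply It\^o's formula to an exponentially weighted $|\hat X|^2$, use (\textbf{H1}) to convert the shifted integrals back to $[s,T]$, and exploit the smallness of $K_2$ so that the anticipation factor (of order $e^{\beta K_2}$) stays bounded while $\beta\to\infty$ drives the contraction constant below $1$. The only technical difference is cosmetic: the paper runs the contraction in the weighted $L^2$-space $\mathcal{H}_\beta[s,T]=L^2_\mathcal{F}([s,T];\mathbf{R}^n)$ with norm $\big(\mathbb{E}\int_s^T|X(t)|^2e^{-\beta t}\,dt\big)^{1/2}$ and afterwards upgrades the fixed point to $L^2_\mathcal{F}(\Omega;C([s,T];\mathbf{R}^n))$ by a direct sup-estimate, whereas you propose to work in the sup-norm space from the outset (which is why you need BDG, while the paper does not).
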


\begin{proof}
For any $\beta>0$, define
\begin{equation*}
  \mathcal{H}_{\beta}[s,T]:=L_\mathcal{F}^2([s,T];\mathbf{R}^n),
\end{equation*}
equipped with the norm
\begin{equation*}
   \big|\big|X\big|\big|_{\mathcal{H}_\beta[s,T]}:=\bigg(\mathbb{E}\int_s^T|X(t)|^2e^{-\beta t}dt\bigg)^\frac{1}{2}.
\end{equation*}
Let the mapping $\mathcal{T}:\mathcal{H}_\beta[s,T]\rightarrow\mathcal{H}_{\beta}[s,T]$, apparently it is well-defined. In fact, for any $x(\cdot)\in\mathcal{H}_{\beta}[s,T]$ and $x(t)=\alpha_1(t),\ s-K_1\leq t< s$, $x(t)=\alpha_2(t),\ T<t\leq T+K_2$, consider the following SDE:
\begin{equation*}\left\{\begin{aligned}
   dX(t)=&\ b(t,X(t),x(t-\delta_1(t)),x(t+\delta_2(t))dt\\
         &+\sigma(t,X(t),x(t-\delta_1(t)),x(t+\delta_2(t))dW(t),\quad t\in [s,T],\\
    X(s)=&\ \zeta,\ X(t)=\alpha_1(t),\quad t\in[s-K_1,s),\ X(t)=\alpha_2(t),\quad t\in(T,T+K_2].
\end{aligned}\right.\end{equation*}
Then $X(\cdot)=\mathcal{T}(x(\cdot))\in\mathcal{H}_\beta[s,T]$ by the existence and uniqueness result of the solution to SDE in Yong and Zhou \cite{YZ99}.
Next for any $x_1(\cdot),x_2(\cdot)\in\mathcal{H}_\beta[s,T]$, and $x_1(t)=x_2(t)=\alpha_1(t),\ t\in[s-K_1,s)$, $x_1(t)=x_2(t)=\alpha_2(t),\ t\in(T,T+K_2]$, denote
\begin{equation*}
   (X_1(\cdot),X_2(\cdot))=(\mathcal{T}(x_1(\cdot)),\mathcal{T}(x_2(\cdot))),\quad \hat{X}(\cdot)=X_1(\cdot)-X_2(\cdot),\quad \hat{x}(\cdot)=x_1(\cdot)-x_2(\cdot).
\end{equation*}
 Then we obtain
\begin{equation*}\left\{\begin{aligned}
   d\hat{X}(t)=&\ \big[b(t,X_1(t),x_1(t-\delta_1(t)),x_1(t+\delta_2(t)))-b(t,X_2(t),x_2(t-\delta_1(t)),x_2(t+\delta_2(t)))\big]dt\\
               &+\big[\sigma(t,X_1(t),x_1(t-\delta_1(t)),x_1(t+\delta_2(t)))\\
               &\quad-\sigma(t,X_2(t),x_2(t-\delta_1(t)),x_2(t+\delta_2(t)))\big]dW(t),\quad t\in [s,T],\\
    \hat{X}(s)=&\ 0,\ \hat{X}(t)=0,\ t\in[s-K_1,s)\cup(T,T+K_2].
\end{aligned}\right.\end{equation*}
 Applying It\^o's formula to $e^{-\beta r}|\hat{X}(r)|^2$, we get
\begin{equation*}\begin{aligned}
   &e^{-\beta r}|\hat{X}(r)|^2=\int_s^re^{-\beta t}\Big[-\beta|\hat{X}(t)|^2+2\big\langle\hat{X}(t),b(t,X_1(t),x_1(t-\delta_1(t)),x_1(t+\delta_2(t)))\\
   &\qquad-b(t,X_2(t),x_2(t-\delta_1(t)),x_2(t+\delta_2(t)))\big\rangle+\big\langle\sigma(t,X_1(t),x_1(t-\delta_1(t)),x_1(t+\delta_2(t)))\\
   &\qquad-\sigma(t,X_2(t),x_2(t-\delta_1(t)),x_2(t+\delta_2(t))),\sigma(t,X_1(t),x_1(t-\delta_1(t)),x_1(t+\delta_2(t)))\\
   &\qquad-\sigma(t,X_2(t),x_2(t-\delta_1(t)),x_2(t+\delta_2(t)))\big\rangle\Big]dt\\
   &\quad+2\int_s^re^{-\beta t}\big\langle\hat{X}(t),\sigma(t,X_1(t),x_1(t-\delta_1(t)),x_1(t+\delta_2(t)))\\
   &\qquad\qquad-\sigma(t,X_2(t),x_2(t-\delta_1(t)),x_2(t+\delta_2(t)))\big\rangle dW(t).
\end{aligned}\end{equation*}
Hence by (\textbf{H2}), we obtain
\begin{eqnarray*}\begin{aligned}
   &\mathbb{E}\int_s^T\beta e^{-\beta t}|\hat{X}(t)|^2dt\\
   &\leq\mathbb{E}\int_s^Te^{-\beta t}\Big[2L|\hat{X}(t)|^2+2L|\hat{X}(t)||\hat{x}(t-\delta_1(t))|+2L|\hat{X}(t)||\hat{x}(t+\delta_2(t))|\\
   &\qquad+3L^2|\hat{X}(t)|^2+3L^2|\hat{x}(t-\delta_1(t))|^2+3L^2|\hat{x}(t+\delta_2(t))|^2\Big]dt\\
   &\leq\mathbb{E}\int_s^Te^{-\beta t}\Big[(2L+5L^2)|\hat{X}(t)|^2+(3L^2+1)|\hat{x}(t-\delta_1(t))|^2+(3L^2+1)|\hat{x}(t+\delta_2(t))|^2\Big]dt.
\end{aligned}\end{eqnarray*}
Noting (\textbf{H1}), we deduce
\begin{equation*}\begin{aligned}
  &\mathbb{E}\int_s^T e^{-\beta t}|\hat{X}(t)|^2dt\\
  &\leq(3L^2+1)\big(\beta-2L-5L^2\big)^{-1}\mathbb{E}\int_s^T e^{-\beta t}\big[|\hat{x}(t-\delta_1(t))|^2+|\hat{x}(t+\delta_2(t))|^2\big]dt\\
  &\leq L(3L^2+1)(1+e^{\beta K_2})\big(\beta-2L-5L^2\big)^{-1}\mathbb{E}\int_s^T e^{-\beta t}|\hat{x}(t)|^2dt.
\end{aligned}\end{equation*}
Since $K_2$ is sufficiently small, we can choose some sufficiently large $\beta$ such that $L(3L^2+1)(1+e^{\beta K_2})(\beta-2L-5L^2)^{-1}<1$, then $\mathcal{T}$ is a contraction mapping. Thus ASDDE (\ref{ASDDE}) has a unique solution $X(\cdot)\in L_{\mathcal{F}}^2([s,T];\mathbf{R}^n)$. Next we try to prove that $X(\cdot)\in L_{\mathcal{F}}^2(\Omega;C([s,T];\mathbf{R}^n))$. In fact,
\begin{equation*}\begin{aligned}
     &\mathbb{E}\Big[\sup\limits_{s\leq t\leq T}|X(t)|^2\Big]\\
\leq &\ 3\mathbb{E}|\zeta|^2+3T\mathbb{E}\int_s^T|b(t,X(t),X(t-\delta_1(t)),X(t+\delta_2(t)))|^2dt\\
     &+3C\mathbb{E}\int_s^T|\sigma(t,X(t),X(t-\delta_1(t)),X(t+\delta_2(t)))|^2dt\\
\leq &\ 3\mathbb{E}|\zeta|^2+12T\mathbb{E}\int_s^T|b(t,0,0,0)|^2dt+12C\mathbb{E}\int_s^T|\sigma(t,0,0,0)|^2dt+12L^2(T+C)\mathbb{E}\int_s^T|X(t)|^2dt\\
     &+12L^2(T+C)\mathbb{E}\int_s^T|X(t-\delta_1(t))|^2dt+12L^2(T+C)\mathbb{E}\int_s^T|X(t+\delta_2(t))|^2dt\\
\leq &\ 3\mathbb{E}|\zeta|^2+12T\mathbb{E}\int_s^T|b(t,0,0,0)|^2dt+12C\mathbb{E}\int_s^T|\sigma(t,0,0,0)|^2dt\\
   &+12L^2(T+C)(1+2L)\mathbb{E}\int_s^T|X(t)|^2dt+12L^2(T+C)\mathbb{E}\int_{s-K_1}^s|\alpha_1(t)|^2dt\\
   &+12L^2(T+C)\mathbb{E}\int_{T}^{T+K_2}|\alpha_2(t)|^2dt<\infty,
\end{aligned}\end{equation*}
where $C>0$ is a constant. Hence we complete the proof.
\end{proof}

\begin{Remark}\label{re2.4}
In fact, ASDDEs are a generalization of ASDEs and SDDEs. Let $\delta_1(\cdot)\equiv 0$, then ASDDE (\ref{ASDDE}) becomes the ASDE. While let $\delta_2(\cdot)\equiv 0$, the ASDDE (\ref{ASDDE}) becomes the SDDE.
\end{Remark}


\section{Representations of Optimal Control and Value Function}

In this section, we present the main results of this paper, which solves the above \textbf{Problem (D-BSLQ)}. We will give two alternative expressions of the solution to the optimal control problem, which in fact, can be proved to be equivalent.

First we introduce the following delayed Riccati equation：
\begin{equation}\left\{\begin{aligned}\label{delayed Riccati}
\dot{\Sigma}(t)=&-\Sigma(t)A(t)^\top-A(t)\Sigma(t)+2\Sigma(t)\big[Q(t)+\bar{Q}(t+\delta)\big]\Sigma(t)\\
                &-B(t)\Sigma(t)\mathcal{M}^{-1}(t)B(t)^\top-C(t)\mathcal{N}^{-1}(t)C(t)^\top\\
                &-\bar{B}(t)\Sigma(t-\delta)\mathcal{M}^{-1}(t-\delta)\bar{B}(t)^\top-\bar{C}(t)\mathcal{N}^{-1}(t-\delta)\bar{C}(t)^\top,\quad t\in[s,T],\\
      \Sigma(T)=&\ 0,\ \Sigma(t)=I,\quad t\in[s-\delta,s),
\end{aligned}\right.\end{equation}
where $\mathcal{M}(t):=2R(t)\Sigma(t)+2\bar{R}(t+\delta)\Sigma(t)+I$, $\mathcal{N}(t):=2N(t)+2\bar{N}(t+\delta)$.
The existence and uniqueness of the solution to (\ref{delayed Riccati}) will be discussed in the next section. Let $\Sigma(\cdot)$ be the solution to (\ref{delayed Riccati}), consider the following equations:
\begin{equation}\left\{\begin{aligned}\label{Riccati without delay}
\dot{L}(t)=&\ L(t)A(t)+A(t)^\top L(t)+2\big[Q(t)+\bar{Q}(t+\delta)\big]\\
           &-L(t)B(t)\Sigma(t)\mathcal{M}^{-1}(t)B(t)^\top L(t)-L(t)\bar{B}(t)\Sigma(t-\delta)\mathcal{M}^{-1}(t-\delta)\bar{B}(t)^\top L(t)\\
           &-L(t)C(t)\mathcal{N}^{-1}(t)C(t)^\top L(t)-L(t)\bar{C}(t)\mathcal{N}^{-1}(t-\delta)\bar{C}(t)^\top L(t),\quad t\in[s,T],\\
      L(s)=&\ 2G,
\end{aligned}\right.\end{equation}
\begin{eqnarray}\left\{\begin{aligned}\label{linear delayed and time-advanced BSDE}
  d\bar{X}(t)=&\Big\{\big[A(t)^\top-2(Q(t)+\bar{Q}(t+\delta))\Sigma(t)\big]\bar{X}(t)+2(Q(t)+\bar{Q}(t+\delta))\Lambda(t)\\
              &+\mathbb{E}^{\mathcal{F}_t}\big[(\bar{A}^\top\bar{X})|_{t+\delta}\big]\Big\}dt+\Big\{\big[B(t)^\top-2[R(t)+\bar{R}(t+\delta)]
               \Sigma(t)\mathcal{M}^{-1}(t)B(t)^\top\big]\bar{X}(t)\\
              &+\mathbb{E}^{\mathcal{F}_t}\big\{\big[\bar{B}(t+\delta)^\top-2[R(t)+\bar{R}(t+\delta)]\Sigma(t)\mathcal{M}^{-1}(t)\bar{B}(t+\delta)^\top\big]
               \bar{X}(t+\delta)\big\}\\
              &\ +2[R(t)+\bar{R}(t+\delta)]\big[2\Sigma(t)(R(t)+\bar{R}(t+\delta))+I\big]^{-1}\Gamma(t)\Big\}dW(t),\\
  d\Lambda(t)=&\Big\{\big\{2\Sigma(t)[Q(t)+\bar{Q}(t+\delta)]-A(t)\big\}\Lambda(t)-B(t)\big\{2\Sigma(t)[R(t)+\bar{R}(t+\delta)]+I\big\}^{-1}\\
              &\times\Gamma(t)-\bar{A}(t)\Lambda(t-\delta)-\bar{B}(t)\big\{2\Sigma(t-\delta)[R(t-\delta)+\bar{R}(t)]+I\}^{-1}\Gamma(t-\delta)\\
              &+\big[\bar{B}(t)\Sigma(t-\delta)\mathcal{M}^{-1}(t-\delta)\bar{B}(t)^\top+\bar{C}(t)\mathcal{N}^{-1}(t-\delta)\bar{C}(t)^\top\big]\\
              &\times\big[\mathbb{E}^{\mathcal{F}_{t-\delta}}[\bar{X}(t)]-\bar{X}(t)\big]\Big\}dt+\Gamma(t)dW(t),\quad t\in[s,T],\\
   \bar{X}(s)=&\ 2G(I+2\Sigma(s)G)^{-1}\Lambda(s),\ \bar{X}(t)=0,\quad t\in(T,T+\delta],\\
   \Lambda(T)=&-\xi,\ \Lambda(t)=0,\ \Gamma(t)=0,\quad t\in[s-\delta,s).
\end{aligned}\right.\end{eqnarray}
Apparently (\ref{Riccati without delay}) is a Riccati equation without delay, and its solvability will be addressed in the next section. (\ref{linear delayed and time-advanced BSDE}) is a linear DABSDE, and it is a hard work to study its solvability, so let's put this question aside for now.

Based on (\ref{delayed Riccati}), (\ref{Riccati without delay}) and (\ref{linear delayed and time-advanced BSDE}), we finally introduce the following ASDDE:
\begin{eqnarray}\left\{\begin{aligned}\label{linear ASDDE}
      dS(t)=&\bigg\{\big[A(t)^\top-L(t)B(t)\Sigma(t)\mathcal{M}^{-1}(t)B(t)^\top-L(t)C(t)\mathcal{N}^{-1}(t)C(t)^\top\big]S(t)\\
      &\quad+\mathbb{E}^{\mathcal{F}_t}\big[(\bar{A}^\top S)|_{t+\delta}\big]-L(t)B(t)\Sigma(t)\mathcal{M}^{-1}(t)\mathbb{E}^{\mathcal{F}_t}\big[(\bar{B}^\top S)|_{t+\delta}\big]\\
      &\quad+L(t)B(t)\big\{2\Sigma(t)[R(t)+\bar{R}(t+\delta)]+I\big\}^{-1}\Gamma(t)\\
      &\quad-L(t)\bar{B}(t)\Sigma(t-\delta)\mathcal{M}^{-1}(t-\delta)(B^\top S)|_{t-\delta}-L(t)\bar{B}(t)\Sigma(t-\delta)\mathcal{M}^{-1}(t-\delta)\\
      &\quad\times\bar{B}(t)^\top S(t)+L(t)\bar{B}(t)\big\{2\Sigma(t-\delta)[R(t-\delta)+\bar{R}(t)]+I\big\}^{-1}\Gamma(t-\delta)\\
      &\quad-L(t)C(t)\mathcal{N}^{-1}(t)\mathbb{E}^{\mathcal{F}_{t}}\big[(\bar{C}^\top S)|_{t+\delta}\big]-L(t)\bar{C}(t)(\mathcal{N}^{-1}C^\top S)|_{t-\delta}\\
      &\quad-L(t)\bar{C}(t)\mathcal{N}^{-1}(t-\delta)\bar{C}(t)^\top S(t)+\big[L(t)B(t)\Sigma(t)\mathcal{M}^{-1}(t)\bar{B}(t+\delta)^\top\\
      &\quad+L(t)C(t)\mathcal{N}^{-1}(t)\bar{C}(t+\delta)^\top-\bar{A}(t+\delta)^\top\big]L(t+\delta)\big[I+(\Sigma L)|_{t+\delta}\big]^{-1}\\
      &\quad\times\mathbb{E}^{\mathcal{F}_t}\big[(\Sigma S+\Lambda)|_{t+\delta}\big]+L(t)
       \big[\bar{B}(t)\Sigma(t-\delta)\mathcal{M}^{-1}(t-\delta)(B^\top L)|_{t-\delta}-\bar{A}(t)\\
      &\quad+\bar{C}(t)(\mathcal{N}^{-1}C^\top L)|_{t-\delta}\big]\big[I+(\Sigma L)|_{t-\delta}\big]^{-1}(\Sigma S+\Lambda)|_{t-\delta}-L(t)\big\{\bar{B}(t)\Sigma(t-\delta)\\
      &\quad\times\mathcal{M}^{-1}(t-\delta)\bar{B}(t)^\top +\bar{C}(t)\mathcal{N}^{-1}(t-\delta)\bar{C}(t)^\top\big\}
       \big[\mathbb{E}^{\mathcal{F}_{t-\delta}}(\bar{X}(t))-\bar{X}(t)\big]\bigg\}dt\\
      &+\bigg\{\big[I+L(t)\Sigma(t)\big]\mathcal{M}^{-1}(t)\big\{B(t)^\top S(t)+\mathbb{E}^{\mathcal{F}_t}\big[(\bar{B}^\top S)|_{t+\delta}\big]\big\}
       -\big[L(t)-2R(t)\\
      &\quad-2\bar{R}(t+\delta)\big]\big\{2\Sigma(t)[R(t)+\bar{R}(t+\delta)]+I\big\}^{-1}\Gamma(t)-\big[I+L(t)\Sigma(t)\big]\mathcal{M}^{-1}(t)\\
      &\quad\times(\bar{B}^\top L)|_{t+\delta}\big[I+(\Sigma L)|_{t+\delta}\big]^{-1}
       \mathbb{E}^{\mathcal{F}_t}\big[(\Sigma S+\Lambda)|_{t+\delta}\big]-\big[I+L(t)\Sigma(t)\big]\mathcal{M}^{-1}(t)\\
      &\quad\times B(t)^\top L(t)\big[I+\Sigma(t)L(t)\big]^{-1}\big[\Sigma(t)S(t)+\Lambda(t)\big]\bigg\}dW(t)\quad t\in[s,T],\\
 S(s)=&\ 0,\ S(t)=0,\quad t\in[s-\delta,s)\cup(T,T+\delta],
\end{aligned}\right.\end{eqnarray}
where $\mathbb{E}^{\mathcal{F}_t}\big[(\bar{A}^\top S)|_{t+\delta}\big]=\mathbb{E}^{\mathcal{F}_t}\big[\bar{A}(t+\delta)^\top S(t+\delta)\big]$ and $(B^\top S)|_{t-\delta}=B(t-\delta)^\top S(t-\delta)$, etc., for simplicity. By Theorem \ref{thm2.2}, the ASDDE (\ref{linear ASDDE}) admits a unique solution $S(\cdot)\in L^2_{\mathcal{F}}(\Omega;C([s,T];\mathbf{R}^n))$.

Next we give the main results of this paper.

\begin{mythm}\label{thm3.1}
Let \textbf{(A1)-(A3)} hold. Suppose that $\bar{A}(t)=\bar{B}(t)=\bar{C}(t)=0$ for $t\in[s,s+\delta]$, $\bar{Q}(t)=\bar{R}(t)=\bar{N}(t)=\bar{A}(t)=\bar{B}(t)=\bar{C}(t)=0$ for $t\in[T,T+\delta]$ and
\begin{equation}\label{important equality}
  \bar{A}(t+\delta)+\bar{B}(t+\delta)\mathcal{R}^{-1}_i(t)B(t)^\top P_i(t)+\bar{C}(t+\delta)\mathcal{N}^{-1}(t)C(t)^\top P_i(t)=0,\ t\in[s-\delta,T],
\end{equation}
where $\mathcal{R}_i(t):=2R(t)+2\bar{R}(t+\delta)+P_i(t)$ and $P_i(\cdot)$ is the solution to the delayed Riccati equation
\begin{equation}\left\{\begin{aligned}\label{P-i}
\dot{P}_i(t)=&\ P_i(t)A(t)+A(t)^\top P_i(t)-2[Q(t)+\bar{Q}(t+\delta)]+P_i(t)B(t)\mathcal{R}_i^{-1}(t)B(t)^\top P_i(t)\\
             &\ +P_i(t)C(t)\mathcal{N}^{-1}(t)C(t)^\top P_i(t)+P_i(t)\bar{B}(t)\mathcal{R}_i^{-1}(t-\delta)\bar{B}(t)^\top P_i(t)\\
             &\ +P_i(t)\bar{C}(t)\mathcal{N}^{-1}(t-\delta)\bar{C}(t)^\top P_i(t),\quad t\in[s,T],\\
      P_i(T)=&\ 2iI,\ P_i(t)=I,\quad t\in[s-\delta,s),
\end{aligned}\right.\end{equation}
for all $i\in\mathbf{N}^+$. Suppose $\Sigma(\cdot)\in C([s,T];\textbf{S}_+^n)$, $L(\cdot)\in C([s,T];\textbf{S}_+^n)$, $(\bar{X}(\cdot),\Lambda(\cdot),\Gamma(\cdot))\in L^2_{\mathcal{F}}(\Omega;\\C([s,T];\textbf{R}^n))\times L^2_{\mathcal{F}}(\Omega;C([s,T];\textbf{R}^n))\times L_{\mathcal{F}}^2([s,T];\textbf{R}^n)$ are the solutions to (\ref{delayed Riccati}), (\ref{Riccati without delay}), (\ref{linear delayed and time-advanced BSDE}), respectively.
Then \textbf{Problem (D-BSLQ)} is uniquely solvable and the optimal control can be expressed as
\begin{equation}\label{optimal control-1}
  u^*(t)=\mathcal{N}^{-1}(t)\Big\{-C(t)^\top L(t)Y^*(t)+C(t)^\top S(t)+\mathbb{E}^{\mathcal{F}_t}\big[(-\bar{C}^\top LY^*+\bar{C}^\top S)|_{t+\delta}\big]\Big\},
\end{equation}
for $t\in[s,T]$. Moreover, the optimal cost is
\begin{equation}\label{optimal cost}\begin{aligned}
  &J(s,\xi;u^*(\cdot))=\mathbb{E}\bigg\{\big\langle\bar{G}\varphi(s-\delta),\varphi(s-\delta)\big\rangle+\big\langle\Lambda(s),(I+2\Sigma(s)G)^{-1}G\Lambda(s)\big\rangle\\
  &\quad+\int_{s-\delta}^s\Big[\big\langle\bar{Q}(t+\delta)\varphi(t),\varphi(t)\big\rangle+\big\langle\bar{R}(t+\delta)\psi(t),\psi(t)\big\rangle
   +\big\langle\bar{N}(t+\delta)\eta(t),\eta(t)\big\rangle\Big]dt\\
  &\quad+\int_s^T\Big[\big\langle[R(t)+\bar{R}(t+\delta)]\big\{2\Sigma(t)[R(t)+\bar{R}(t+\delta)]+I\big\}^{-1}\Gamma(t),\Gamma(t)\big\rangle\\
  &\qquad\qquad+\big\langle[Q(t)+\bar{Q}(t+\delta)]\Lambda(t),\Lambda(t)\big\rangle\Big]dt\bigg\}.
\end{aligned}\end{equation}
\end{mythm}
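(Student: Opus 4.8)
The plan is to prove unique solvability by convex analysis and then to extract the closed-loop optimal control and the value by completion of squares, the Riccati equations and the auxiliary DABSDE/ASDDE serving to decouple the resulting optimality system. By the linearity of (\ref{controlled delayed BSDE}) and Theorem \ref{thm2.1}, the map $u(\cdot)\mapsto(Y(\cdot),Z(\cdot))$ is affine and bounded, so $u(\cdot)\mapsto J(s,\xi;u(\cdot))$ is a continuous quadratic functional on $\mathcal{U}[s,T]$. First I would perform the shift $t\mapsto t-\delta$ on each delayed term of (\ref{cost functional}), turning $\langle\bar{N}(t)u(t-\delta),u(t-\delta)\rangle$ into $\langle\bar{N}(t+\delta)u(t),u(t)\rangle$ (and similarly for $\bar{Q},\bar{R}$); the pieces falling on $[s-\delta,s)$ involve only prescribed history and are constant, while those on $(T-\delta,T]$ are controlled by $\bar{Q}=\bar{R}=\bar{N}=0$ on $[T,T+\delta]$. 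This recombines the weights into $N+\bar{N}(\cdot+\delta)$, $Q+\bar{Q}(\cdot+\delta)$, $R+\bar{R}(\cdot+\delta)$, and \textbf{(A3)} then makes the purely quadratic part of $J$ dominate $\alpha\,\mathbb{E}\int_s^T|u(t)|^2dt$. Hence $J(s,\xi;\cdot)$ is uniformly convex and coercive and has a unique minimizer; the work is to identify it.

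For the explicit form I would take an arbitrary admissible $u(\cdot)$ with state $(Y,Z)$ and apply It\^o's formula on $[s,T]$ to an auxiliary functional of the schematic form $\tfrac{1}{2}\langle L(t)Y(t),Y(t)\rangle+\langle\bar{X}(t),Y(t)\rangle$ plus a deterministic-in-$t$ remainder, add the running cost of (\ref{cost functional}), and reorganize. The normalization $L(s)=2G$ reproduces the endpoint weight $\langle GY(s),Y(s)\rangle$, and the Riccati equation (\ref{Riccati without delay}) for $L$ is designed so that, after the delay shift, all $Y$-quadratic contributions (including those from $Q+\bar{Q}(\cdot+\delta)$) cancel. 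The delicate point is that $Z$ is not free but fixed by the martingale representation, so the $\langle RZ,Z\rangle$- and $\langle\bar{R}Z(\cdot-\delta),Z(\cdot-\delta)\rangle$-terms cannot be minimized directly; this is precisely what the delayed Riccati (\ref{delayed Riccati}) for $\Sigma$ is for, since it supplies the decoupling relation expressing $Z^*$ through $Y^*$ and its short-horizon future values (whence the gain $\mathcal{M}^{-1}$), collapsing the $Z$-square. The DABSDE (\ref{linear delayed and time-advanced BSDE}) for $(\bar{X},\Lambda,\Gamma)$ is then the system cancelling the affine terms that carry the terminal datum through $\Lambda(T)=-\xi$. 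I expect the result to take the form
\[
J(s,\xi;u(\cdot))=J^{*}+\mathbb{E}\int_{s}^{T}\big\langle\mathcal{N}(t)\big(u(t)-u^{*}(t)\big),u(t)-u^{*}(t)\big\rangle\,dt,
\]
with $\mathcal{N}(t)=2N(t)+2\bar{N}(t+\delta)\geq 2\alpha I>0$ and $J^{*}$ equal to the right-hand side of (\ref{optimal cost}); reading off the square gives optimality (and re-proves uniqueness), and reading off $J^{*}$ gives (\ref{optimal cost}).

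The control produced by the square is written through $Y^*$, its conditional future values, and the adjoint data; to bring it to the stated feedback (\ref{optimal control-1}) I would introduce the decoupling combination $\Sigma(\cdot)S(\cdot)+\Lambda(\cdot)$ linking $(\bar{X},\Lambda,\Gamma)$ to the ASDDE (\ref{linear ASDDE}) and verify that $S(\cdot)$---well posed by Theorem \ref{thm2.2}---is exactly what turns the adjoint gains into the feedback coefficients multiplying $Y^*(t)$ and the advanced terms $\mathbb{E}^{\mathcal{F}_t}[\,\cdot\,|_{t+\delta}]$. The hard part throughout is the bookkeeping of the time-shifts: each delayed cross-term in the It\^o expansion becomes, under $t\mapsto t+\delta$, a time-advanced term $\mathbb{E}^{\mathcal{F}_t}[\,\cdot\,|_{t+\delta}]$ together with boundary integrals, and forcing these boundary contributions either to vanish or to reduce to the prescribed-history constants in (\ref{optimal cost}) is what necessitates $\bar{A}=\bar{B}=\bar{C}=0$ near $s$ and near $T$. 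Closing the advanced terms into the feedback then relies on the compatibility identity (\ref{important equality}), through which $\bar{A}(\cdot+\delta),\bar{B}(\cdot+\delta),\bar{C}(\cdot+\delta)$ are re-expressed via $P_i$ so that the $t+\delta$-contributions collapse. Verifying that (\ref{delayed Riccati}), (\ref{Riccati without delay}), (\ref{linear delayed and time-advanced BSDE}) and (\ref{linear ASDDE}) are simultaneously the correct equations for every one of these cancellations is the crux of the computation.
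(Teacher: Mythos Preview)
Your convexity/uniqueness argument and your understanding of the role of the vanishing of $\bar A,\bar B,\bar C$ near $s$ and $T$ are sound, but the completion-of-squares scheme you propose does not work as stated. You plan to apply It\^o's formula to $\tfrac12\langle L(t)Y(t),Y(t)\rangle+\langle\bar X(t),Y(t)\rangle$ and arrive at an exact identity $J(s,\xi;u)=J^*+\mathbb{E}\int_s^T\langle\mathcal N(t)(u-u^*),u-u^*\rangle\,dt$. This fails for two reasons. First, $L$ is fixed at the \emph{left} endpoint by $L(s)=2G$ and carries no terminal condition; integrating from $s$ to $T$ leaves you with an uncontrolled term $\langle L(T)\xi,\xi\rangle$ which has no counterpart in the cost. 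Second, and more fundamentally, in a backward LQ problem $Z$ is part of the solution, not a control, so you cannot hope to get a perfect square in $u$ alone: a nonnegative $Z$-square necessarily remains, and it cannot be ``collapsed'' by $\Sigma$ as you suggest, because $\Sigma(T)=0$ and hence its inverse (the natural weight for a $Z$-square at $T$) blows up.

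The paper's route is exactly designed to handle this. One applies It\^o's formula to $\langle P_i(t)(Y(t)+\Lambda_i(t)),Y(t)+\Lambda_i(t)\rangle$ with the \emph{family} $P_i$ of (\ref{P-i}) satisfying $P_i(T)=2iI$, and uses (\ref{important equality}) to absorb the advanced cross-terms; the outcome (equation (\ref{look for lower bound-1})) is an identity with \emph{two} nonnegative squares, one in $u$ weighted by $\mathcal N$ and one in $Z$ weighted by $\mathcal R_i=2R+2\bar R(\cdot+\delta)+P_i$, plus a nonnegative $\langle(P_i(s)+2G)[\cdots],[\cdots]\rangle$ term. Dropping these gives a lower bound depending on $i$, and Proposition \ref{pro4.7} (the convergence $\Sigma_i\to\Sigma$) lets one pass to the limit $i\to\infty$ to obtain (\ref{look for lower bound-3}). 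Attainment of this lower bound by $u^*$ is then checked separately (Proposition \ref{pro5.2}) via the Hamiltonian system. The equation for $L$ enters only afterwards, in Proposition \ref{pro5.3}, to convert the adjoint representation $u^*=\mathcal N^{-1}\{C^\top X^*+\mathbb{E}^{\mathcal F_t}[(\bar C^\top X^*)|_{t+\delta}]\}$ into the feedback (\ref{optimal control-1}) through $X^*=-LY^*+S$; it is not the vehicle for the completion of squares.
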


\begin{Remark}\label{re3.1}
Noting when $\bar{A}(\cdot),\bar{B}(\cdot),\bar{C}(\cdot),\bar{G}(\cdot),\bar{Q}(\cdot),\bar{R}(\cdot),\bar{N}(\cdot)\equiv 0$, Theorem \ref{thm3.1} is reduced to Theorem 3.2 in Lim and Zhou \cite{LZ01}, for the problem without delay. In this case, the optimal control is a linear state feedback of the entire past history of the state process $(Y(\cdot),Z(\cdot))$.
\end{Remark}

\begin{Remark}\label{re3.2}
From (\ref{optimal control-1}), the optimal control $u^*(t)$ explicitly depends on not only the current sate $Y^*(t)$ but also the future state $Y^*(t+\delta)$. Hence it is a linear feedback of the entire past history and the future state trajectory in a short future period of time $\delta$, which is one of the major distinctive features of \textbf{Problem (D-BSLQ)}.
\end{Remark}

\begin{Remark}
The equality (\ref{important equality}) plays an important role in looking for the lower bound of the cost (see (\ref{look for lower bound-1}) in Section 5) and deriving the existence and uniqueness of the solution to the following stochastic Hamiltonian system (see Proposition \ref{pro5.1} in Section 5). Although it seems a little complex, it is not harsh. For example, considering the one-dimensional case, let $R(\cdot),\bar{R}(\cdot),C(\cdot)\equiv 0$ and $\bar{A}(\cdot+\delta)+\bar{B}(\cdot+\delta)B(\cdot)=0$, then (\ref{important equality}) holds.
\end{Remark}

\vspace{1mm}

In fact, the following result can explain the above remark more clearly. For this target, we introduce the following stochastic Hamiltonian system:
\begin{eqnarray}\left\{\begin{aligned}\label{stochastic Hamiltonian}
  dX^*(t)=&\Big\{A(t)^\top X^*(t)-2[Q(t)+\bar{Q}(t+\delta)]Y^*(t)+\mathbb{E}^{\mathcal{F}_t}\big[(\bar{A}^\top X^*)|_{t+\delta}\big]\Big\}dt\\
          &+\Big\{B(t)^\top X^*(t)-2[R(t)+\bar{R}(t+\delta)]Z^*(t)+\mathbb{E}^{\mathcal{F}_t}\big[(\bar{B}^\top X^*)|_{t+\delta}\big]\Big\}dW(t),\\
 -dY^*(t)=&\big[A(t)Y^*(t)+\bar{A}(t)Y^*(t-\delta)+B(t)Z^*(t)+\bar{B}(t)Z^*(t-\delta)+C(t)u^*(t)\\
          &+\bar{C}(t)u^*(t-\delta)\big]dt-Z^*(t)dW(t),\quad t\in[s,T],\\
  X^*(s)=&-2GY^*(s),\ X^*(t)=0,\quad t\in(T,T+\delta],\\
  Y^*(T)=&\ \xi,\ Y^*(t)=\varphi(t),\ Z^*(t)=\psi(t),u^*(t)=\eta(t),\quad t\in[s-\delta,s).
\end{aligned}\right.\end{eqnarray}
Then we give the following result.

\begin{mythm}\label{thm3.2}
Under the same conditions as Theorem \ref{thm3.1}, the stochastic Hamiltonian system (\ref{stochastic Hamiltonian}) has a unique solution $(X^*(\cdot),Y^*(\cdot),Z^*(\cdot))\in L^2_\mathcal{F}(\Omega;C([s,T];\mathbf{R}^n))\times L^2_\mathcal{F}(\Omega;C([s,T];\mathbf{R}^n))\times L^2_\mathcal{F}([s,T];\mathbf{R}^n))$. Moreover, \textbf{Problem (D-BSLQ)} is uniquely solvable and the optimal control can be expressed as
\begin{equation}\label{optimal control-2}
  u^*(t)=\mathcal{N}^{-1}(t)\Big\{C(t)^\top X^*(t)+\mathbb{E}^{\mathcal{F}_t}\big[(\bar{C}^\top X^*)|_{t+\delta})\big]\Big\},\quad t\in[s,T],
\end{equation}
and the optimal cost is (\ref{optimal cost}).
\end{mythm}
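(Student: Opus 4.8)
The plan is to establish Theorem \ref{thm3.2} by proving it equivalent to Theorem \ref{thm3.1}, the bridge being the decoupling relation
\begin{equation*}
  X^*(t)=-L(t)Y^*(t)+S(t),\quad t\in[s,T],
\end{equation*}
where $L(\cdot)$ solves the Riccati equation (\ref{Riccati without delay}) and $S(\cdot)$ solves the ASDDE (\ref{linear ASDDE}), the data $(\bar X,\Lambda,\Gamma)$ of (\ref{linear delayed and time-advanced BSDE}) being absorbed into $S$ through (\ref{linear ASDDE}). A first consistency check supports this guess: since $L(s)=2G$ and $S(s)=0$, we get $X^*(s)=-2GY^*(s)$, matching the initial condition of (\ref{stochastic Hamiltonian}); and on $(T,T+\delta]$ both $S$ and the barred coefficients vanish, so $X^*\equiv0$ there as required.

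For existence, I would take the optimal quadruple $(Y^*(\cdot),Z^*(\cdot),u^*(\cdot))$ furnished by Theorem \ref{thm3.1} together with $S(\cdot)$ from (\ref{linear ASDDE}), and \emph{define} $X^*:=-LY^*+S$. The backward equation in (\ref{stochastic Hamiltonian}) then holds by construction, being nothing but the state equation (\ref{controlled delayed BSDE}) driven by $u^*$. The substantive step is to verify the forward equation for $X^*$: applying It\^o's formula to $-L(t)Y^*(t)+S(t)$, and using $\dot L$ from (\ref{Riccati without delay}) for the $Y^*$-linear terms, the drift and diffusion of $S$ from (\ref{linear ASDDE}), and the dynamics of $Y^*$, I would match the $dt$- and $dW$-coefficients against those prescribed in (\ref{stochastic Hamiltonian}) term by term. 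In particular, matching the diffusion coefficients yields the feedback relation for $Z^*$ in terms of $Y^*$, $S$, $\Gamma$ and the advanced quantities; here $\Sigma(\cdot)$ from (\ref{delayed Riccati}) enters, the coefficients of (\ref{linear ASDDE}) having been engineered precisely so that this cancellation occurs. Once $X^*=-LY^*+S$ is confirmed, substituting into (\ref{optimal control-2}) gives
\begin{equation*}
  u^*(t)=\mathcal{N}^{-1}(t)\big\{-C(t)^\top L(t)Y^*(t)+C(t)^\top S(t)+\mathbb{E}^{\mathcal{F}_t}\big[(-\bar C^\top LY^*+\bar C^\top S)|_{t+\delta}\big]\big\},
\end{equation*}
which is exactly (\ref{optimal control-1}); hence the two controls coincide, and the optimal cost is again (\ref{optimal cost}) with no new computation.

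For uniqueness of the solution to (\ref{stochastic Hamiltonian}), I would argue that any solution $(X^*,Y^*,Z^*)$ produces, via (\ref{optimal control-2}), an admissible control whose associated state is $Y^*$; a completion-of-squares/sufficiency argument (as used for Theorem \ref{thm3.1}) shows this control is optimal, so by the unique solvability already established it must equal $u^*$, forcing $(Y^*,Z^*)$ and then $X^*=-LY^*+S$ to be unique. Alternatively, since (\ref{stochastic Hamiltonian}) is a \emph{linear} coupled forward-backward system with delay and anticipation, uniqueness follows directly by subtracting two solutions and running the a priori estimates of Theorems \ref{thm2.1} and \ref{thm2.2}, where the smallness of $\delta$ renders the associated map a contraction. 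The main obstacle is the It\^o coefficient-matching in the existence step: it is a lengthy reorganization in which the delayed Riccati equation (\ref{delayed Riccati}), the Riccati equation (\ref{Riccati without delay}) and the auxiliary systems (\ref{linear delayed and time-advanced BSDE})--(\ref{linear ASDDE}) must combine so that every term cancels correctly, with particular care for the anticipating conditional-expectation terms $\mathbb{E}^{\mathcal{F}_t}[(\cdot)|_{t+\delta}]$, whose measurability and time-shift interplay (governed by (\textbf{H1}) and the vanishing of the barred coefficients near $s$ and $T$) is what keeps the computation consistent.
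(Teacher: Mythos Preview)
Your plan inverts the paper's logical order and, within that order, would be circular. In the paper Theorem \ref{thm3.2} is proved \emph{first} and independently: existence of the Hamiltonian system comes from the $\Sigma$-decoupling $Y^*(t)=\Sigma(t)\bar X(t)-\Lambda(t)$ (Proposition \ref{pro5.1}), with $(\bar X,\Lambda,\Gamma)$ taken from (\ref{linear delayed and time-advanced BSDE}); one checks by It\^o's formula that $(\bar X,\bar Y,\bar Z)$ solves (\ref{stochastic Hamilonian-*}). Optimality then follows by computing the cost of $u^*$ via It\^o on $\langle X^*,Y^*\rangle$ and $\langle X^*,\Lambda\rangle$ (Proposition \ref{pro5.2}) and matching it against the lower bound (\ref{look for lower bound-3}) obtained by completion of squares. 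Only \emph{after} Theorem \ref{thm3.2} is established does the paper prove the relation $X^*=-LY^*+S$ (Proposition \ref{pro5.3}), and Theorem \ref{thm3.1} is deduced from it. So the decoupling you propose is indeed the correct bridge, but the paper runs it in the opposite direction; taking Theorem \ref{thm3.1} as input to prove Theorem \ref{thm3.2} would beg the question.

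Your uniqueness suggestions also diverge from the paper. The paper argues directly: for the difference $(\hat X,\hat Y,\hat Z)$ of two solutions, It\^o's formula on $\langle\hat X,\hat Y\rangle$ together with $G\ge0$, $Q+\bar Q|_{t+\delta}\ge0$, $R+\bar R|_{t+\delta}\ge0$ and $\mathcal N>0$ forces $C^\top\hat X+\bar C(\cdot+\delta)^\top\hat X(\cdot+\delta)=0$ a.e.; this decouples the backward equation into a homogeneous delayed BSDE, so $\hat Y=\hat Z=0$, and then the forward ASDE gives $\hat X=0$. Your contraction alternative is not available as stated: Theorems \ref{thm2.1} and \ref{thm2.2} estimate the backward and forward parts separately, and smallness of $\delta$ alone does not close the forward--backward coupling. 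Your first alternative (uniqueness via strict convexity of the cost) is what the paper uses for uniqueness of the \emph{control}, but to promote that to uniqueness of $(X^*,Y^*,Z^*)$ you still need the forward equation for $X^*$ to be uniquely solvable given $(Y^*,Z^*)$, which is exactly the ASDE argument the paper carries out.
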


The proofs of Theorem \ref{thm3.1} and Theorem \ref{thm3.2} are deferred to Section 5.

\begin{Remark}\label{re3.3}
In the stochastic Hamiltonian system (\ref{stochastic Hamiltonian}), $(Y^*(\cdot),Z^*(\cdot))$ is indeed the state process pair, while $X^*(\cdot)$ is the adjoint process.
\end{Remark}

\section{Unique Solvability of Riccati Equations}

In this section, we will study the existence and uniqueness of solutions to (\ref{delayed Riccati}) and (\ref{Riccati without delay}). It is clear that when $\bar{B}(\cdot)\equiv 0$, (\ref{delayed Riccati}) becomes the same type of Riccati equation as (3.4) in \cite{LZ01} and is uniquely solvable. Hence in this section, we only discuss the case where $\bar{B}(\cdot)\neq 0$.
First we consider the following two equations:
\begin{equation}\left\{\begin{aligned}\label{Sigma}
\dot{\Sigma}(t)=&-\Sigma(t)A(t)^\top-A(t)\Sigma(t)+2\Sigma(t)\big[Q(t)+\bar{Q}(t+\delta)\big]\Sigma(t)\\
                &-B(t)\Sigma(t)\mathcal{M}^{-1}(t)B(t)^\top-C(t)\mathcal{N}^{-1}(t)C(t)^\top\\
                &-\bar{B}(t)\Sigma(t-\delta)\mathcal{M}^{-1}(t-\delta)\bar{B}(t)^\top-\bar{C}(t)\mathcal{N}^{-1}(t-\delta)\bar{C}(t)^\top,\quad t\in[s,T],\\
      \Sigma(T)=&\ M,\ \Sigma(t)=I,\quad t\in[s-\delta,s),
\end{aligned}\right.\end{equation}
\begin{equation}\left\{\begin{aligned}\label{P}
\dot{P}(t)=&\ P(t)A(t)+A(t)^\top P(t)-2\big[Q(t)+\bar{Q}(t+\delta)\big]\\
           &+P(t)B(t)\big[2R(t)+2\bar{R}(t+\delta)+P(t)\big]^{-1}B(t)^\top P(t)\\
           &+P(t)C(t)\mathcal{N}^{-1}(t)C(t)^\top P(t)\\
           &+P(t)\bar{B}(t)\big[2R(t-\delta)+2\bar{R}(t)+P(t-\delta)\big]^{-1}\bar{B}(t)^\top P(t)\\
           &+P(t)\bar{C}(t)\mathcal{N}^{-1}(t-\delta)\bar{C}(t)^\top P(t),\quad t\in[s,T],\\
      P(T)=&\ M^{-1},\ P(t)=I,\quad t\in[s-\delta,s),
\end{aligned}\right.\end{equation}
where $\mathcal{M}(t):=2R(t)\Sigma(t)+2\bar{R}(t+\delta)\Sigma(t)+I$, $\mathcal{N}(t):=2N(t)+2\bar{N}(t+\delta)$ and $M$ is a given $n\times n$ symmetric matrix. In the following context, we will suppress some time variables $t$ for simplicity of writing without ambiguity.
\begin{mypro}\label{pro4.1}
If the solution $\Sigma(\cdot)$ to the Riccati equation (\ref{Sigma}) satisfies $\Sigma(\cdot)\in C([s,T];\mathbf{S}^n_{+})$, then the solution is unique.
\end{mypro}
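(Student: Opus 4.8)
The plan is to argue directly on the difference of two candidate solutions. Suppose $\Sigma_1(\cdot),\Sigma_2(\cdot)\in C([s,T];\mathbf{S}^n_+)$ both solve (\ref{Sigma}) and set $\Delta(\cdot):=\Sigma_1(\cdot)-\Sigma_2(\cdot)$; since $\Sigma_1\equiv\Sigma_2\equiv I$ on $[s-\delta,s)$ by the prescribed history, showing $\Delta\equiv0$ on $[s,T]$ yields uniqueness. Before subtracting the two copies of (\ref{Sigma}) I would record that each $\mathcal{M}_i(t):=2[R(t)+\bar{R}(t+\delta)]\Sigma_i(t)+I$ is invertible with a uniformly bounded inverse: because $R(t)+\bar{R}(t+\delta)\ge0$ by \textbf{(A3)} and $\Sigma_i(t)\ge0$, the product $[R(t)+\bar{R}(t+\delta)]\Sigma_i(t)$ has the same spectrum as the symmetric positive semidefinite matrix $\Sigma_i^{1/2}[R+\bar{R}]\Sigma_i^{1/2}$, hence nonnegative real eigenvalues, so $\mathcal{M}_i(t)$ has spectrum bounded below by $1$; boundedness of $\Sigma_i$ on the compact interval then gives a uniform bound $\|\mathcal{M}_i^{-1}\|\le 1+2\|R+\bar{R}\|_\infty\sup_t\|\Sigma_i(t)\|$. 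Note that the terms $C\mathcal{N}^{-1}C^\top$ and $\bar{C}\mathcal{N}^{-1}(t-\delta)\bar{C}^\top$ do not contain $\Sigma$ and therefore cancel identically in the difference, so $\mathcal{N}^{-1}$ plays no role here.

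Subtracting the two equations, the first-order terms contribute $-\Delta A^\top-A\Delta$, the term $2\Sigma[Q+\bar{Q}]\Sigma$ contributes the $\Delta$-linear expression $2\big(\Delta[Q+\bar{Q}]\Sigma_1+\Sigma_2[Q+\bar{Q}]\Delta\big)$, and the two terms $B\Sigma\mathcal{M}^{-1}B^\top$, $\bar{B}\Sigma(t-\delta)\mathcal{M}^{-1}(t-\delta)\bar{B}^\top$ contribute differences of the map $\Sigma\mapsto\Sigma\mathcal{M}^{-1}=\Sigma[2(R+\bar{R})\Sigma+I]^{-1}$ at times $t$ and $t-\delta$. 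Writing $U_i:=\Sigma_i\mathcal{M}_i^{-1}$ and $F:=R+\bar{R}$, the resolvent identity $U_1-U_2=(I-2U_1F)\Delta\mathcal{M}_2^{-1}$ together with the uniform bounds above shows each such difference is Lipschitz in its argument with a bounded constant. Hence $\Delta$ obeys a linear delay identity with terminal datum $\Delta(T)=M-M=0$ and zero history, and, setting $\phi:=|\Delta|$, one obtains
\begin{equation*}
  \phi(t)\le\int_t^T\big[K_1\phi(r)+K_2\phi(r-\delta)\big]dr,\qquad t\in[s,T],
\end{equation*}
with $\phi\equiv0$ on $[s-\delta,s)$ and constants $K_1,K_2$ depending only on the coefficient bounds and on $\max_i\sup_t\|\Sigma_i(t)\|$.

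The main obstacle is that (\ref{Sigma}) is a \emph{backward} equation whose delay points into the \emph{past}: integrating backward from $T$ one would need the still-unknown values $\phi(\cdot-\delta)$, so neither a naive backward Gronwall nor a forward method of steps closes. I would resolve this by passing to the backward primitive $\Psi(t):=\int_t^T\phi(r)dr$, which is nonnegative, nonincreasing, satisfies $\Psi\le\Psi(s)=:\mathcal{I}$ on $[s-\delta,T]$, and equals $\mathcal{I}$ on $[s-\delta,s]$. The estimate above gives $\phi(t)\le K_1\Psi(t)+K_2\Psi(t-\delta)$, and splitting $\int_t^T\Psi(r-\delta)dr\le\int_{t-\delta}^t\Psi+\int_t^T\Psi\le\delta\mathcal{I}+\int_t^T\Psi(r)dr$ isolates the past contribution as a single small forcing term $K_2\delta\mathcal{I}$. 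Setting $G(t):=\int_t^T\Psi(r)dr$ then yields $-G'(t)=\Psi(t)\le(K_1+K_2)G(t)+K_2\delta\mathcal{I}$ with $G(T)=0$, and the backward Gronwall inequality gives $\Psi(t)\le K_2\delta\mathcal{I}\,e^{(K_1+K_2)(T-t)}$; evaluating at $t=s$ produces $\mathcal{I}\le K_2\delta\,e^{(K_1+K_2)(T-s)}\mathcal{I}$. Since $\delta$ is sufficiently small (so that $K_2\delta\,e^{(K_1+K_2)(T-s)}<1$) and $\mathcal{I}\ge0$, this forces $\mathcal{I}=\int_s^T\phi=0$, whence continuity of $\phi$ gives $\phi\equiv0$ and $\Sigma_1\equiv\Sigma_2$. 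The delicate point is precisely this conversion of the past-delay term into a forcing term absorbed by the smallness of $\delta$, which is what makes the otherwise ill-directed Gronwall argument go through.
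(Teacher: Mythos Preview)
Your argument is correct and reaches the same conclusion as the paper, but by a genuinely different route. The paper differentiates $e^{\beta t}|\Delta(t)|^2$, obtains an inequality of the form
\[
\beta\int_s^Te^{\beta r}|\Delta(r)|^2\,dr\le \big(C_1+C_2(1+e^{\beta\delta})\big)\int_s^Te^{\beta r}|\Delta(r)|^2\,dr
\]
after shifting the delayed integral, and then chooses $\beta$ large (with $\delta$ small so that $C_2e^{\beta\delta}$ does not dominate $\beta$) to force the weighted $L^2$-norm of $\Delta$ to vanish. You avoid the exponential weight entirely: you work directly with $\phi=|\Delta|$, integrate twice to pass through $\Psi$ and $G$, and isolate the contribution of the delay as a small additive forcing $K_2\delta\,\mathcal I$, which is then absorbed via the closing inequality $\mathcal I\le K_2\delta\,e^{(K_1+K_2)(T-s)}\mathcal I$. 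Both proofs hinge on the smallness of $\delta$ in exactly the same way, but yours is more elementary (no weight parameter to tune) and makes the role of $\delta$ more transparent; the paper's weighted-norm technique, on the other hand, is closer to the a priori estimate machinery used throughout the article and generalises more readily to stochastic settings.

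One small slip worth noting: the displayed bound $\|\mathcal M_i^{-1}\|\le 1+2\|R+\bar R\|_\infty\sup_t\|\Sigma_i(t)\|$ is in fact a bound for $\|\mathcal M_i\|$, not for its inverse, and a spectral lower bound alone does not control the operator norm of the inverse of a non-normal matrix. This is harmless, however: invertibility of $\mathcal M_i(t)$ at each $t$ together with continuity on the compact interval $[s,T]$ (and $\mathcal M_i\equiv 2(R+\bar R)+I$ on $[s-\delta,s)$) still yields \emph{some} finite uniform bound on $\|\mathcal M_i^{-1}\|$, so the Lipschitz constants $K_1,K_2$ exist and the remainder of your argument goes through unchanged.
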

\begin{proof}
Suppose $\Sigma_1(\cdot),\Sigma_2(\cdot)\in C([s,T];\mathbf{S}_{+}^n)$ are two solutions to (\ref{Sigma}). Denote $\Delta(\cdot):=\Sigma_1(\cdot)-\Sigma_2(\cdot)$, then we have
\begin{equation*}\left\{\begin{aligned}
\dot{\Delta}=&\ \big[-A+2\Sigma_1(Q+\bar{Q}|_{t+\delta})\big]\Delta+\Delta\big[-A+2\Sigma_1(Q+\bar{Q}|_{t+\delta})\big]^\top\\
             &-2\Delta(Q+\bar{Q}|_{t+\delta})\Delta-B\Delta\big[2(R+\bar{R}|_{t+\delta})\Sigma_1+I\big]^{-1}B^\top\\
             &+B\Sigma_2\big[2(R+\bar{R}|_{t+\delta})\Sigma_2+I\big]^{-1}(2R+2\bar{R}|_{t+\delta})\Delta\big[2(R+\bar{R}|_{t+\delta})\Sigma_1+I\big]^{-1}B^\top\\
             &-\bar{B}\Delta|_{t-\delta}\big[2(R|_{t-\delta}+\bar{R})\Sigma_1|_{t-\delta}+I\big]^{-1}\bar{B}^\top
              +\bar{B}\Sigma_2|_{t-\delta}\big[2(R|_{t-\delta}+\bar{R})\Sigma_2|_{t-\delta}+I\big]^{-1}\\
             &\times(2R|_{t-\delta}+2\bar{R})\Delta|_{t-\delta}\big[2(R|_{t-\delta}+\bar{R})\Sigma_1|_{t-\delta}+I\big]^{-1}\bar{B}^\top,\quad t\in[s,T],\\
   \Delta(T)=&\ 0,\ \Delta(t)=0,\quad t\in[s-\delta,s).
\end{aligned}\right.\end{equation*}
For any $\beta\in \mathbf{R}$, applying It\^o's formula to $e^{\beta t}|\Delta(t)|^2$, we have
\begin{equation}\begin{aligned}\label{eq4.3}
  0=&\ e^{\beta t}|\Delta(t)|^2+\int_t^Te^{\beta r}\Big\{\beta|\Delta|^2+2\big\langle\Delta,\big[-A+2\Sigma_1(Q+\bar{Q}|_{r+\delta})\big]\Delta\\
    &\ +\Delta\big[-A+2\Sigma_1(Q+\bar{Q}|_{r+\delta})\big]^\top-2\Delta(Q+\bar{Q}|_{r+\delta})\Delta-B\Delta\big[2(R+\bar{R}|_{r+\delta})\Sigma_1+I\big]^{-1}B^\top\\
    &\ +B\Sigma_2\big[2(R+\bar{R}|_{r+\delta})\Sigma_2+I\big]^{-1}(2R+2\bar{R}|_{r+\delta})\Delta\big[2(R+\bar{R}|_{r+\delta})\Sigma_1+I\big]^{-1}B^\top\\
    &\ -\bar{B}\Delta|_{r-\delta}\big[2(R|_{r-\delta}+\bar{R})\Sigma_1|_{r-\delta}+I\big]^{-1}\bar{B}^\top
     +\bar{B}\Sigma_2|_{r-\delta}[2(R|_{r-\delta}+\bar{R})\Sigma_2|_{r-\delta}+I]^{-1}\\
    &\ \times(2R|_{r-\delta}+2\bar{R})\Delta|_{r-\delta}\big[2(R|_{r-\delta}+\bar{R})\Sigma_1|_{r-\delta}+I\big]^{-1}\bar{B}^\top\big\rangle\Big\}dr.
\end{aligned}\end{equation}
Hence we obtain
\begin{equation*}\begin{aligned}
       &\beta\int_t^Te^{\beta r}|\Delta(r)|^2dr\leq 2\int_t^Te^{\beta r}|\Delta(r)||\dot{\Delta}(r)|dr\\
  \leq &\sup\limits_{s\leq r\leq T}\Big\{4|A|+8|\Sigma_1|(Q+\bar{Q}|_{r+\delta})+4|\Delta|(Q+\bar{Q}|_{r+\delta})+2|B|^2\\
       &\qquad+4|B|^2|\Sigma_2|(R+\bar{R}|_{r+\delta})\Big\}\int_t^Te^{\beta r}|\Delta(r)|^2dr\\
       &+\sup\limits_{s\leq r\leq T}\Big\{|\bar{B}|^2+2|\bar{B}|^2\Sigma_2|_{r-\delta}(R|_{r-\delta}+\bar{R})\Big\}
        \Big\{\int_t^Te^{\beta r}|\Delta(r)|^2dr+\int_t^Te^{\beta r}|\Delta(r-\delta)|^2dr\Big\}.
\end{aligned}\end{equation*}
Noting
\begin{equation*}
  \int_s^Te^{\beta r}|\Delta(r-\delta)|^2dr=e^{\beta\delta}\int_{s-\delta}^{T-\delta}e^{\beta r}|\Delta(r)|^2dr\leq e^{\beta\delta}\int_{s}^{T}e^{\beta r}|\Delta(r)|^2dr.
\end{equation*}
Thus we derive
\begin{equation*}\begin{aligned}
     &\beta\int_s^Te^{\beta r}|\Delta(r)|^2dr\\
\leq &\ \bigg[\sup\limits_{s\leq r\leq T}\Big\{4|A|+8|\Sigma_1|(Q+\bar{Q}|_{r+\delta})+4|\Delta|(Q+\bar{Q}|_{r+\delta})+2|B|^2+4|B|^2|\Sigma_2|(R+\bar{R}|_{r+\delta})\Big\}\\
     &+\sup\limits_{s\leq r\leq T}\Big\{|\bar{B}|^2+2|\bar{B}|^2\Sigma_2|_{r-\delta}(R|_{r-\delta}+\bar{R})\Big\}(1+e^{\beta\delta})\bigg]\int_s^Te^{\beta r}|\Delta(r)|^2dr.
\end{aligned}\end{equation*}
Since $\Sigma_1(\cdot),\Sigma_2(\cdot)$ are continuous on $[s,T]$, $\Sigma_1(\cdot),\Sigma_2(\cdot),\Delta(\cdot)$ are uniformly bounded. Recall \textbf{(A1), (A3)} and $\delta$ is sufficiently small, we can choose $\beta$ sufficiently large such that\begin{equation*}\begin{aligned}
  &\beta>\sup\limits_{s\leq r\leq T}\Big\{4|A|+8|\Sigma_1|(Q+\bar{Q}|_{r+\delta})+4|\Delta|(Q+\bar{Q}|_{r+\delta})+2|B|^2+4|B|^2|\Sigma_2|(R+\bar{R}|_{r+\delta})\Big\}\\ &\qquad+\sup\limits_{s\leq r\leq T}\Big\{|\bar{B}|^2+2|\bar{B}|^2\Sigma_2|_{r-\delta}(R|_{r-\delta}+\bar{R})\Big\}(1+e^{\beta\delta}),
\end{aligned}\end{equation*}
then we deduce
\begin{equation*}
  \int_s^Te^{\beta r}|\Delta(r)|^2dr=0.
\end{equation*}
And it follows that
\begin{equation}\begin{aligned}\label{eq4.4}
  &\Delta(t)=0,\quad a.e.\ t\in[s,T].
\end{aligned}\end{equation}
Finally, by (\ref{eq4.3}) and (\ref{eq4.4}), we get
\begin{equation*}
  e^{\beta t}|\Delta(t)|^2= 0,
\end{equation*}
thus $\Delta(t)=0$ for all $s\leq t\leq T$. The proof is complete.
\end{proof}

If we can prove the existence of solutions to the Riccati equation (\ref{Sigma}), then we can obtain the unique solvability of it. However, this is an arduous work and we can only deal with the special case so far.  Let $R(\cdot)+\bar{R}(\cdot+\delta)\equiv 0$, now (\ref{Sigma}) and (\ref{P}) become:
\begin{equation}\left\{\begin{aligned}\label{Sigma-special}
  \dot{\Sigma}=&-\Sigma A^\top-A\Sigma+2\Sigma(Q+\bar{Q}|_{t+\delta})\Sigma-B\Sigma B^\top-C\mathcal{N}^{-1}C^\top\\
               &-\bar{B}\Sigma|_{t-\delta}\bar{B}^\top-\bar{C}\mathcal{N}^{-1}|_{t-\delta}\bar{C}^\top,\quad t\in[s,T],\\
     \Sigma(T)=&\ M,\ \Sigma(t)=I,\quad t\in[s-\delta,s),
\end{aligned}\right.\end{equation}
\begin{equation}\left\{\begin{aligned}\label{P-special}
  \dot{P}=&\ PA+A^\top P-2(Q+\bar{Q}|_{t+\delta})+PBP^{-1}B^\top P+PC\mathcal{N}^{-1}C^\top P\\
          &+P\bar{B}P^{-1}|_{t-\delta}\bar{B}^\top P+P\bar{C}\mathcal{N}^{-1}|_{t-\delta}\bar{C}^\top P,\quad t\in[s,T],\\
     P(T)=&\ M^{-1},\ P(t)=I,\quad t\in[s-\delta,s).
\end{aligned}\right.\end{equation}

\vspace{1mm}

To prove the unique solvability of (\ref{Sigma-special}), we need the following lemma.

Consider the linear delayed matrix-valued differential equation
\begin{equation}\left\{\begin{aligned}\label{Sigma-special-special}
   \dot{\hat{\Sigma}}(t)=&-\hat{\Sigma}(t)\hat{A}(t)^\top-\hat{A}(t)\hat{\Sigma}(t)-\hat{\Sigma}(t)-\hat{B}(t)\hat{\Sigma}(t-\delta)\hat{B}(t)^\top-\hat{H}(t),\quad t\in[s,T],\\
         \hat{\Sigma}(T)=&\ \hat{M},\ \hat{\Sigma}(t)=\hat{F}(t),\quad t\in[s-\delta,s),
\end{aligned}\right.\end{equation}
where $\hat{A}(\cdot),\hat{B}(\cdot)\in L^\infty([0,T];\mathbf{R}^{n\times n}),\hat{H}(\cdot)\in L^\infty([0,T];\mathbf{S}^n),\hat{F}(\cdot)\in L^\infty([s-\delta,s];\mathbf{S}^n)$ and $\hat{M}$ is a given $n\times n$ symmetric matrix. We introduce the following ASDE:
\begin{equation}\left\{\begin{aligned}\label{linear ASDE}
  d\Phi(t)=&\big\{\hat{A}(t)^\top\Phi(t)+\mathbb{E}^{\mathcal{F}_t}\big[(\hat{B}^\top\Phi)|_{t+\delta}\big]\big\}dt\\
           &+\big\{\Phi(t)-\mathbb{E}^{\mathcal{F}_t}\big[(\hat{B}^\top\Phi)|_{t+\delta}\big]\big\}dW(t),\quad t\in[s,T],\\
   \Phi(s)=&\ I,\ \Phi(t)=0,\quad t\in(T,T+\delta].
\end{aligned}\right.\end{equation}
By Theorem \ref{thm2.2}, the ASDE (\ref{linear ASDE}) has a unique solution $\Phi(\cdot)\in L^2_{\mathcal{F}}(\Omega;C([s,T];\mathbf{R}^{n\times n}))$.
In the following, we aim to obtain an explicit solution to (\ref{linear ASDE}). To this end, let $\Pi(\cdot)$ be an exponential martingale, which satisfies
\begin{equation*}\left\{\begin{aligned}
  d\Pi(t)&=\gamma(t)\Pi(t)dW(t),\quad t\in[s,T+\delta],\\
   \Pi(s)&=I,
\end{aligned}\right.\end{equation*}
and $\Phi(t)=\Upsilon(t)\Pi(t)$, where $\gamma(\cdot)$ and $\Upsilon(\cdot)$ are deterministic functions to be determined. Applying It\^o's formula to $\Phi(\cdot)$, we obtain
\begin{equation}\left\{\begin{aligned}\label{Upsilon}
  &\dot{\Upsilon}(t)=\hat{A}(t)^\top\Upsilon(t)+\hat{B}(t+\delta)^\top\Upsilon(t+\delta),\\
  &\Upsilon(t)\gamma(t)=\Upsilon(t)-\hat{B}(t+\delta)^\top\Upsilon(t+\delta),\quad t\in[s,T].
\end{aligned}\right.\end{equation}
Hence we derive the following {\it time-advanced ordinary differential equation} (AODE, for short):
\begin{equation}\left\{\begin{aligned}\label{linear AODE}
  &\dot{\Upsilon}(t)=\hat{A}(t)^\top\Upsilon(t)+\hat{B}(t+\delta)^\top\Upsilon(t+\delta),\quad t\in[s,T],\\
  &\Upsilon(s)=I,\ \Upsilon(t)=0,\quad t\in(T,T+\delta].
\end{aligned}\right.\end{equation}
Since the coefficients of (\ref{linear AODE}) are bounded, it admits a unique solution $\Upsilon(\cdot)\in C([s,T];\mathbf{R}^{n\times n})$. Then by (\ref{Upsilon}), we can get $\gamma(\cdot)$. Furthermore, if $\Upsilon(\cdot)>0$, then $\Phi(\cdot)>0$.

\begin{mylem}\label{lem4.1}
The linear delayed matrix-valued differential equation (\ref{Sigma-special-special}) admits a unique solution $\hat{\Sigma}(\cdot)\in C([s,T];\mathbf{S}^n)$. Furthermore, if $\hat{H}(\cdot)\geq 0$, $\hat{F}(\cdot)\geq 0$, $\hat{M}\geq 0$ and $\hat{A}(\cdot),\hat{B}(\cdot)$ satisfy that the solution to the AODE (\ref{linear AODE}) $\Upsilon(\cdot)>0$, then $\hat{\Sigma}(\cdot)\in C([s,T];\mathbf{S}^n_+)$.
\end{mylem}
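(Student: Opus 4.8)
The plan is to treat \eqref{Sigma-special-special} in two stages: first the well-posedness $\hat\Sigma(\cdot)\in C([s,T];\mathbf{S}^n)$, and then the sign condition $\hat\Sigma(\cdot)\in C([s,T];\mathbf{S}^n_+)$ under the additional hypotheses.

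For existence and uniqueness I would recast the terminal-value problem as the backward integral equation
$$
\hat\Sigma(t)=\hat M+\int_t^T\big[\hat\Sigma\hat A^\top+\hat A\hat\Sigma+\hat\Sigma+\hat B(r)\hat\Sigma(r-\delta)\hat B(r)^\top+\hat H\big]dr,\quad t\in[s,T],
$$
with $\hat\Sigma(r)=\hat F(r)$ prescribed on $[s-\delta,s)$, and run a contraction argument on $C([s,T];\mathbf{S}^n)$ in the spirit of Theorems \ref{thm2.1}--\ref{thm2.2}, using the weighted norm $\|\Xi\|:=\sup_{s\leq t\leq T}e^{\beta(T-t)}|\Xi(t)|$. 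The only term coupling distinct times is the delay term $\hat B\Xi(\cdot-\delta)\hat B^\top$; since the coefficients are bounded and $\delta$ is small (equivalently $\beta$ large), its contribution to the Lipschitz estimate can be made a strict contraction. Symmetry of the iterates is preserved because every term on the right-hand side is symmetric whenever $\Xi$ and $\Xi(\cdot-\delta)$ are, and continuity is automatic from the integral form, so the fixed point lies in $C([s,T];\mathbf{S}^n)$.

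For the positivity I would exploit the auxiliary process $\Phi$ solving \eqref{linear ASDE}, which by the construction $\Phi=\Upsilon\Pi$ is strictly positive once $\Upsilon>0$. Fixing $\tau\in[s,T]$ and running \eqref{linear ASDE} from $\Phi(\tau)=I$, I would apply It\^o's formula to $\Phi(r)^\top\hat\Sigma(r)\Phi(r)$ on $[\tau,T]$. The ASDE has been engineered so that, in the drift, all terms involving $\hat A$ and the $-\hat\Sigma$ term cancel against the quadratic variation of the diffusion, leaving only $-\Phi^\top\hat H\Phi-\Phi^\top\hat B\hat\Sigma(\cdot-\delta)\hat B^\top\Phi+c^\top\hat\Sigma c$ with $c(r):=\mathbb{E}^{\mathcal{F}_r}[\hat B(r+\delta)^\top\Phi(r+\delta)]$. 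Taking expectations and using $\hat\Sigma(T)=\hat M$ then yields the representation
$$
\hat\Sigma(\tau)=\mathbb{E}\big[\Phi(T)^\top\hat M\Phi(T)\big]+\mathbb{E}\int_\tau^T\Phi^\top\hat H\Phi\,dr+\mathbb{E}\int_\tau^T\big[\Phi^\top\hat B\hat\Sigma(\cdot-\delta)\hat B^\top\Phi-c^\top\hat\Sigma c\big]dr.
$$
The first two terms lie in $\mathbf{S}^n_+$ given $\hat M\geq0$, $\hat H\geq0$, and the strict positivity of $\Phi$ secures the sign-definiteness of this congruence kernel.

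The remaining integral is the delicate point and the main obstacle. Shifting the index by $\delta$ and using $\Phi\equiv0$ on $(T,T+\delta]$, I would rewrite it as a conditional-variance term $\mathbb{E}^{\mathcal{F}_r}\big[(X-\mathbb{E}^{\mathcal{F}_r}X)^\top\hat\Sigma(r)(X-\mathbb{E}^{\mathcal{F}_r}X)\big]$ with $X:=\hat B(r+\delta)^\top\Phi(r+\delta)$, plus a boundary contribution over $[s-\delta,s]$ carrying $\hat F$. Both are in $\mathbf{S}^n_+$ precisely when $\hat\Sigma(\cdot)\geq0$ on the relevant window and $\hat F\geq0$, so the nonnegativity of the representation presupposes the very nonnegativity one wants to prove. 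I would break this circularity by a monotone Picard scheme: take $\hat\Sigma^{(0)}$ to solve the delay-free version of \eqref{Sigma-special-special} (manifestly in $\mathbf{S}^n_+$ by the congruence representation), and let $\hat\Sigma^{(k+1)}$ solve \eqref{Sigma-special-special} with the frozen source $\hat H+\hat B\hat\Sigma^{(k)}(\cdot-\delta)\hat B^\top$. Since this source lies in $\mathbf{S}^n_+$ by the induction hypothesis together with $\hat F\geq0$, every iterate stays in $\mathbf{S}^n_+$, while the contraction from the first stage forces $\hat\Sigma^{(k)}\to\hat\Sigma$ uniformly; closedness of the cone $\mathbf{S}^n_+$ then delivers $\hat\Sigma\in C([s,T];\mathbf{S}^n_+)$. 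I expect the bookkeeping of the $\delta$-shifted delay/history boundary terms and the verification of the drift cancellation to be the most calculation-heavy parts, with the circularity above the genuine conceptual hurdle.
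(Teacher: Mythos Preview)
Your contraction argument for existence and uniqueness is essentially the paper's; the paper simply remarks that the method of Theorem~\ref{thm2.1} applies, since all coefficients are bounded.

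For positivity the two proofs share the opening move---It\^o's formula applied to $\Phi(\cdot)^\top\hat\Sigma(\cdot)\Phi(\cdot)$ with $\Phi$ from~\eqref{linear ASDE}---but then diverge precisely at your ``delicate point.'' The paper does \emph{not} keep a conditional-variance remainder over $[\tau,T]$. After the shift $r\mapsto r+\delta$ and using $\Phi\equiv0$ on $(T,T+\delta]$, it asserts that the shifted term and the $c^\top\hat\Sigma c$ term cancel over $[t,T]$, so the only surviving $\hat\Sigma$-dependence is the boundary piece
\[
\int_{t-\delta}^{t}(\Phi^{-1}(t))^\top(\Phi^\top\hat B)|_{r+\delta}\,\hat\Sigma(r)\,(\hat B^\top\Phi)|_{r+\delta}\,\Phi^{-1}(t)\,dr,
\]
which involves $\hat\Sigma(r)$ only for $r\in[t-\delta,t)$. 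This localization lets the paper run a direct forward bootstrap: at $t=s$ the window sits inside $[s-\delta,s)$ where $\hat\Sigma=\hat F\geq0$, so $\hat\Sigma(s)\geq0$, and then one propagates in $t$ by continuity. No Picard scheme is needed, and the hypothesis $\Upsilon(\cdot)>0$ enters exactly to make $\Phi$ invertible so that the representation can be solved for $\hat\Sigma(t)$.

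Your Picard route is a genuinely different argument, and it has a structural bonus: each iterate solves an ordinary (non-delayed) Lyapunov equation, whose nonnegativity follows from the elementary congruence formula with the fundamental matrix of $\hat A+\tfrac12 I$, so you never actually need $\Upsilon(\cdot)>0$ or the ASDE~\eqref{linear ASDE}. One point does need care, though: the map $\hat\Sigma^{(k)}\mapsto\hat\Sigma^{(k+1)}$ that freezes \emph{only} the delay term is not the fixed-point map from your first stage (there everything was frozen), so ``the contraction from the first stage'' does not literally apply. You must supply a separate contraction estimate for this partial-freeze iteration---straightforward with the same weighted norm, since the linear part sends $\sigma$ to $\int_\tau^T\Psi(\tau,r)^\top\hat B(r)\sigma(r-\delta)\hat B(r)^\top\Psi(\tau,r)\,dr$ and the factor $e^{\beta\delta}/\beta$ can be made small---but it is an extra step. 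Also, the boundary window in your representation should be $[\tau-\delta,\tau]$, not $[s-\delta,s]$, since you restart $\Phi$ at $\tau$.
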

\begin{proof}
Since the coefficients of (\ref{Sigma-special-special}) are bounded, the unique solvability of (\ref{Sigma-special-special}) can be proved in the same method as Theorem \ref{thm2.1}. Next we prove the second result. Applying It\^o's formula to $\Phi(\cdot)^\top\hat{\Sigma}(\cdot)\Phi(\cdot)$, we have
\begin{equation*}\begin{aligned}
\Phi(t)^\top\hat{\Sigma}(t)\Phi(t)=&\ \Phi(T)^\top\hat{M}\Phi(T)+\int_t^T\Big\{\Phi(r)^\top\big[\hat{B}(r)\hat{\Sigma}(r-\delta)\hat{B}(r)^\top+\hat{H}(r)\big]\Phi(r)\\
 &-\mathbb{E}^{\mathcal{F}_t}\big[(\Phi^\top\hat{B})|_{r+\delta}\big]\hat{\Sigma}(r)\mathbb{E}^{\mathcal{F}_t}\big[(\hat{B}^\top\Phi)|_{r+\delta}\big]\Big\}dr
 -\int_t^T\Big\{\cdots\Big\}dW(r).
\end{aligned}\end{equation*}
Noting $\Phi(t)=0$ for $t\in(T,T+\delta]$, we get
\begin{equation*}\begin{aligned}
  &\mathbb{E}\int_t^T(\Phi^{-1}(t))^\top\Phi(r)^\top\hat{B}(r)\hat{\Sigma}(r-\delta)\hat{B}(r)\Phi(r)\Phi^{-1}(t)dr\\
 =&\ \mathbb{E}\int_{t-\delta}^t(\Phi^{-1}(t))^\top(\Phi^\top\hat{B})|_{r+\delta}\hat{\Sigma}(r)(\hat{B}^\top\Phi)|_{r+\delta}\Phi^{-1}(t)dr\\
  &+\mathbb{E}\int_t^T(\Phi^{-1}(t))^\top(\Phi^\top\hat{B})|_{r+\delta}\hat{\Sigma}(r)(\hat{B}^\top\Phi)|_{r+\delta}\Phi^{-1}(t)dr.
\end{aligned}\end{equation*}
Since $\Upsilon(\cdot)>0$, $\Phi(\cdot)>0$, we derive
\begin{equation*}\begin{aligned}
 \hat{\Sigma}(t)=&\ \mathbb{E}\bigg[(\Phi^{-1}(t))^\top\Phi(T)^\top\hat{M}\Phi(T)\Phi^{-1}(t)+\int_t^T(\Phi^{-1}(t))^\top\Phi(r)^\top\hat{H}(r)\Phi(r)\Phi^{-1}(t)dr\\
                 &\quad+\int_{t-\delta}^t(\Phi^{-1}(t))^\top(\Phi^\top\hat{B})|_{r+\delta}\hat{\Sigma}(r)(\hat{B}^T\Phi)|_{r+\delta}\Phi^{-1}(t)dr\bigg].
\end{aligned}\end{equation*}
Consequently $\hat{\Sigma}(s)\geq 0$ if $\hat{H}(\cdot)\geq 0$, $\hat{M}\geq 0$ and $\hat{F}(\cdot)\geq 0$.
Similarly, for fixed $t\in(s,T]$, $\hat{\Sigma}(t)\geq 0$ because $\hat{\Sigma}(r)\geq 0$ for $r\in[t-\delta,t)$. The proof is complete.
\end{proof}

\begin{Remark}\label{re4.1}
In fact, from the proof of Lemma \ref{lem4.1}, it follows that if $\hat{M}>0$, then $\hat{\Sigma}(\cdot)>0$.
\end{Remark}

\begin{Remark}\label{re4.1-1}
If we consider the solvability of the following equation:
\begin{equation*}\left\{\begin{aligned}
   \dot{\hat{\Sigma}}(t)=&-\hat{\Sigma}(t)\hat{A}(t)^\top-\hat{A}(t)\hat{\Sigma}(t)-B(t)\hat{\Sigma}(t)B(t)^T-\hat{B}(t)\hat{\Sigma}(t-\delta)\hat{B}(t)^\top-\hat{H}(t),\quad t\in[s,T],\\
         \hat{\Sigma}(T)=&\ \hat{M},\ \hat{\Sigma}(t)=\hat{F}(t),\quad t\in[s-\delta,s),
\end{aligned}\right.\end{equation*}
then we can't find $\Phi(\cdot)$ satisfying some equation such that $\hat{\Sigma}(\cdot)\geq 0$. Hence we only give the solvability in the special case: $B(\cdot)=I$ (as Lemma \ref{lem4.1}).
\end{Remark}

Next we continue to study the existence of solutions to (\ref{Sigma-special}). Let {\color{red}$B(\cdot)=I$} and $M>0$. Denote
\begin{equation}\left\{\begin{aligned}\label{A}
  &\hat{A}(t):=A(t)-2\Sigma(t)\big[Q(t)+\bar{Q}(t+\delta)\big],\\
  &\Psi(t):=2\big[Q(t)+\bar{Q}(t+\delta)\big]^{\frac{1}{2}}\Sigma(t),\\
  &\hat{H}(t):=C(t)\mathcal{N}^{-1}(t)C(t)^\top+\bar{C}(t)\mathcal{N}^{-1}(t-\delta)\bar{C}(t)^\top+2\Sigma(t)\big[Q(t)+\bar{Q}(t+\delta)\big]\Sigma(t).
\end{aligned}\right.\end{equation}
Apparently (\ref{Sigma-special}) is equivalent to the following equation:
\begin{equation}\left\{\begin{aligned}\label{Sigma-special-special-special}
   \dot{\Sigma}=&-\Sigma\hat{A}^\top-\hat{A}\Sigma-B\Sigma B^\top-\bar{B}\Sigma|_{t-\delta}\bar{B}^\top-\hat{H},\quad t\in[s,T],\\
      \Sigma(T)=&\ M,\ \Sigma(t)=I,\quad t\in[s-\delta,s).
\end{aligned}\right.\end{equation}
Next we construct the iterative scheme as follows. For $i=0,1,2,\cdots$, set
\begin{equation}\left\{\begin{aligned}\label{A-i}
  &\Sigma^0(t)=M\ \mbox{for }t\in[s,T],\ \Sigma^0(t)=I\ \mbox{for }t\in[s-\delta,s),\\
  &\hat{A}_i(t)=A(t)-2\Sigma^i(t)\big[Q(t)+\bar{Q}(t+\delta)\big],\\
  &\Psi_i(t)=2\big[Q(t)+\bar{Q}(t+\delta)\big]^{\frac{1}{2}}\Sigma^i(t),\\
  &\hat{H}_i(t)=C(t)\mathcal{N}^{-1}(t)C(t)^\top+\bar{C}(t)\mathcal{N}^{-1}(t-\delta)\bar{C}(t)^\top+2\Sigma^i(t)\big[Q(t)+\bar{Q}(t+\delta)\big]\Sigma^i(t),
\end{aligned}\right.\end{equation}
and $\Sigma^{i+1}(\cdot)$ be the solution to
\begin{equation}\left\{\begin{aligned}\label{Sigma-i}
  \dot{\Sigma}^{i+1}=&-\Sigma^{i+1}\hat{A}_i^\top-\hat{A}_i\Sigma^{i+1}-B\Sigma^{i+1}B^\top-\bar{B}\Sigma^{i+1}|_{t-\delta}\bar{B}^\top-\hat{H}_i,\quad t\in[s,T],\\
     \Sigma^{i+1}(T)=&\ M,\ \Sigma^{i+1}(t)=I,\quad t\in[s-\delta,s).
\end{aligned}\right.\end{equation}
Let $\Phi_i(\cdot)$ and $\Upsilon_i(\cdot)$ be the solutions to ASDE (\ref{linear ASDE}) and AODE (\ref{linear AODE}) associated with $\hat{A}_i(\cdot)$ and $\bar{B}(\cdot)$, respectively. Suppose $\Upsilon_i(\cdot)>0$, then by Lemma \ref{lem4.1} (noting $B(\cdot)\equiv I$), we get $\Sigma^{i}(\cdot)\in C([s,T];\bar{\mathbf{S}}^n_+)$. Denote $\Delta^i(\cdot):=\Sigma^i(\cdot)-\Sigma^{i+1}(\cdot)$, then we have
\begin{equation*}\left\{\begin{aligned}
  -\dot{\Delta}^i=&\ \Delta^i\hat{A}_i^\top+\hat{A}_i\Delta^i+B\Delta^iB^\top+\bar{B}\Delta^i|_{t-\delta}\bar{B}^\top+\Sigma^i(\hat{A}_{i-1}-\hat{A}_i)^\top\\
                  &+(\hat{A}_{i-1}-\hat{A}_i)\Sigma^i+\hat{H}_{i-1}-\hat{H}_i,\quad t\in[s,T],\\
      \Delta^i(T)=&\ 0,\ \Delta^i(t)=0,\quad t\in[s-\delta,s).
\end{aligned}\right.\end{equation*}
Denote $\Theta_i(t):=2\big[Q(t)+\bar{Q}(t+\delta)\big]^{\frac{1}{2}}(\Sigma^i(t)-\Sigma^{i-1}(t))$, noting
\begin{equation*}\left\{\begin{aligned}
  &\hat{H}_{i-1}(t)-\hat{H}_i(t)=\frac{1}{2}\Theta_i(t)^\top\Theta_i(t)-\frac{1}{2}\Psi_i(t)^\top\Theta_i(t)-\frac{1}{2}\Theta_i(t)^\top\Psi_i(t),\\
  &\hat{A}_{i-1}(t)-\hat{A}_i(t)=\Theta_i(t)^\top\big[Q(t)+\bar{Q}(t+\delta)\big]^{\frac{1}{2}}.
\end{aligned}\right.\end{equation*}
Hence we obtain
\begin{equation*}
  -\big[\dot{\Delta}_i+\Delta^i\hat{A}_i^\top+\hat{A}_i\Delta^i+B\Delta^iB^\top+\bar{B}\Delta^i|_{t-\delta}\bar{B}^\top\big]=\frac{1}{2}\Theta_i^\top\Theta_i\geq 0.
\end{equation*}
Since $\Delta^i(T)=0$ and $\Delta^i(t)=0$ for $t\in[s-\delta,s)$, it follows that $\Delta^i(\cdot)\geq 0$. Thus $\{\Sigma^i(\cdot)\}$ is a decreasing sequence in $C([s,T];\bar{\mathbf{S}}^n_+)$. Therefore it has a limit, denoting by $\Sigma(\cdot)$. Clearly $\Sigma(\cdot)\in C([s,T];\mathbf{S}^n_+)$ is the solution to (\ref{Sigma-special-special-special}), hence (\ref{Sigma-special}). Furthermore let $\Phi(\cdot)$ and $\Upsilon(\cdot)$ be the solutions to ASDE (\ref{linear ASDE}) and AODE (\ref{linear AODE}) associated with $\hat{A}(\cdot)$ and $\bar{B}(\cdot)$, respectively. Suppose $\Upsilon(\cdot)>0$, then by Remark \ref{re4.1}, $\Sigma(\cdot)>0$ owing to $M>0$.

If $M\geq 0$, we repeat the above step using $M\geq 0$ instead of $M>0$. Then we get $\Sigma(\cdot)\geq 0$. Finally we summarize the above results as follows:\\
\textbf{(A4)}: For $i=0,1,2,\cdots$, the solution to AODE (\ref{linear AODE}) $\Upsilon_i(\cdot)>0$, where $\hat{A}_i(\cdot)$ is given by (\ref{A-i}) and (\ref{Sigma-i}).\\
\textbf{(A5)}: The solution to AODE (\ref{linear AODE}) $\Upsilon(\cdot)>0$, where $\hat{A}(\cdot)$ is given by (\ref{A}) and (\ref{Sigma-special-special-special}).

\begin{mypro}\label{pro4.2}
Let $M\geq 0$ be $n\times n$ symmetric matrix, suppose \textbf{(A4)} holds and $B(\cdot)=I$, then (\ref{Sigma-special}) has the unique solution $\Sigma(\cdot)\in C([s,T];\mathbf{S}^n_+)$. Furthermore, suppose \textbf{(A5)} holds, then
\begin{description}
  \item[(i)] when $M>0$, $\Sigma(\cdot)\in C([s,T];\bar{\mathbf{S}}^n_+)$;
  \item[(ii)] when $M\geq0$, $\Sigma(\cdot)\in C([s,T];\mathbf{S}^n_+)$.
\end{description}
\end{mypro}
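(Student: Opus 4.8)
The plan is to obtain existence from the monotone iteration already set up in \eqref{A-i}--\eqref{Sigma-i}, and then to read off uniqueness and the (semi)definiteness from Proposition \ref{pro4.1} and Lemma \ref{lem4.1}. First I would dispose of uniqueness: once any solution $\Sigma(\cdot)\in C([s,T];\mathbf{S}^n_+)$ has been produced, Proposition \ref{pro4.1}---whose hypotheses require only two positive semi-definite continuous solutions---forces it to be the only one. The whole task therefore reduces to constructing one positive semi-definite solution and then sharpening its positivity under \textbf{(A5)}.

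For existence I would start from $\Sigma^0\equiv M$ on $[s,T]$ and $\Sigma^0\equiv I$ on $[s-\delta,s)$, and define $\hat{A}_i,\Psi_i,\hat{H}_i$ through \eqref{A-i}. Because $B(\cdot)=I$, each linear equation \eqref{Sigma-i} is exactly of the type \eqref{Sigma-special-special} with $\hat{A}=\hat{A}_i$, $\hat{B}=\bar{B}$, $\hat{H}=\hat{H}_i\ge 0$, $\hat{M}=M\ge 0$ and $\hat{F}=I\ge 0$; under \textbf{(A4)} the associated $\Upsilon_i(\cdot)>0$, so Lemma \ref{lem4.1} produces $\Sigma^{i}(\cdot)\in C([s,T];\mathbf{S}^n_+)$ (and in $\bar{\mathbf{S}}^n_+$ when $M>0$). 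The decisive monotonicity step is the computation indicated before the statement: setting $\Delta^i=\Sigma^i-\Sigma^{i+1}$ and $\Theta_i=2[Q+\bar{Q}|_{t+\delta}]^{1/2}(\Sigma^i-\Sigma^{i-1})$, the difference $\Delta^i$ again solves an equation of the form \eqref{Sigma-special-special} with zero terminal and initial data and nonnegative inhomogeneous term $\tfrac12\Theta_i^\top\Theta_i\ge 0$. Applying the positivity-preserving representation of Lemma \ref{lem4.1} (using \textbf{(A4)} once more) gives $\Delta^i(\cdot)\ge 0$, so $\{\Sigma^i(\cdot)\}$ is a decreasing sequence in $C([s,T];\mathbf{S}^n_+)$ bounded below by $0$.

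I expect the genuine work to lie in the passage to the limit, which the preceding discussion treats only briefly. The monotone decrease yields the uniform bound $0\le\Sigma^{i}(t)\le M$, and together with \textbf{(A1)}, \textbf{(A3)} this bounds $\hat{A}_i,\hat{H}_i$, hence $\dot{\Sigma}^{i}$, uniformly; the resulting equicontinuity, via Arzel\`a--Ascoli, upgrades the pointwise monotone limit to a uniform limit $\Sigma(\cdot)\in C([s,T];\mathbf{S}^n_+)$. Dominated convergence in the integral form of \eqref{Sigma-i} then lets me send $i\to\infty$ in every term, in particular in the quadratic term $2\Sigma^{i+1}[Q+\bar{Q}|_{t+\delta}]\Sigma^{i+1}$, so that $\Sigma(\cdot)$ solves \eqref{Sigma-special-special-special}, equivalently \eqref{Sigma-special}. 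Reconciling the linear iterates with the nonlinear target equation in this step is the main obstacle.

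Finally, for the refined positivity under \textbf{(A5)}, I would feed the limit $\Sigma(\cdot)$ into \eqref{A} to build $\hat{A}(\cdot)$ and view \eqref{Sigma-special} in its linear-in-$\Sigma$ guise \eqref{Sigma-special-special-special}, namely of type \eqref{Sigma-special-special} with $\hat{M}=M$ and $\hat{F}=I$. Assumption \textbf{(A5)} gives $\Upsilon(\cdot)>0$, hence $\Phi(\cdot)>0$, so Lemma \ref{lem4.1} applies directly: when $M\ge 0$ it yields $\Sigma(\cdot)\in C([s,T];\mathbf{S}^n_+)$, establishing (ii), and when $M>0$ Remark \ref{re4.1} strengthens this to $\Sigma(\cdot)\in C([s,T];\bar{\mathbf{S}}^n_+)$, establishing (i).
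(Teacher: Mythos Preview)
Your proposal follows essentially the same route as the paper: the iteration \eqref{A-i}--\eqref{Sigma-i}, Lemma \ref{lem4.1} under \textbf{(A4)} to get $\Sigma^{i}\in\mathbf{S}^n_+$ and $\Delta^i\ge 0$, passage to the limit, and then Lemma \ref{lem4.1}/Remark \ref{re4.1} under \textbf{(A5)} for the refined (semi)definiteness, with uniqueness supplied by Proposition \ref{pro4.1}. One small inaccuracy: the monotonicity computation for $\Delta^i$ uses $\hat{A}_{i-1},\hat{H}_{i-1}$ and hence only shows $\Sigma^i\ge\Sigma^{i+1}$ for $i\ge 1$, so the uniform upper bound should be $\Sigma^i\le\Sigma^1$ rather than $\Sigma^i\le M$; this does not affect your equicontinuity/Arzel\`a--Ascoli argument, which in fact supplies detail the paper itself omits.
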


\begin{Remark}\label{re4.2}
The condition \textbf{(A4)}, imposed to the coefficients of (\ref{Sigma-special}), is mainly to guarantee $\Sigma(\cdot)\geq 0$.
\end{Remark}
\begin{Remark}
Although \textbf{(A4)} and \textbf{(A5)} seem a little complex, (\ref{linear AODE}) is a linear ODE, and we have rich results to study its solution. Therefore it is not difficult to study the specific conditions imposed to the coefficients of (\ref{Sigma-special}) to satisfy \textbf{(A4)} and \textbf{(A5)}. However it is not our major job in this paper, so we leave this question to the interested readers.
\end{Remark}

\begin{mypro}\label{pro4.3}
Let $M>0$ be an $n\times n$ symmetric matrix, suppose \textbf{(A4)} and \textbf{(A5)} hold, $B(\cdot)=I$, then (\ref{P-special}) is uniquely solvable with the solution $P(\cdot)\in C([s,T];\bar{\mathbf{S}}^n_+)$.
\end{mypro}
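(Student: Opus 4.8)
The plan is to exploit the evident reciprocal structure of the terminal data in (\ref{Sigma-special}) and (\ref{P-special}): the former ends at $\Sigma(T)=M$ and holds $\Sigma\equiv I$ on the history interval, while the latter ends at $P(T)=M^{-1}$ and holds $P\equiv I$ there. This suggests the ansatz $P(\cdot)=\Sigma(\cdot)^{-1}$, where $\Sigma(\cdot)$ is the solution to (\ref{Sigma-special}). First I would invoke Proposition \ref{pro4.2}(i): since $M>0$, $B(\cdot)=I$, and \textbf{(A4)}, \textbf{(A5)} hold, (\ref{Sigma-special}) admits a unique solution $\Sigma(\cdot)\in C([s,T];\bar{\mathbf{S}}^n_+)$. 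In particular $\Sigma(t)$ is positive definite, hence invertible, for every $t\in[s-\delta,T]$, with $\Sigma\equiv I$ (so $\Sigma^{-1}\equiv I$) on $[s-\delta,s)$.

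Next I would set $P(\cdot):=\Sigma(\cdot)^{-1}$ and verify it solves (\ref{P-special}). Continuity and positive definiteness of $P(\cdot)$ follow from those of $\Sigma(\cdot)$ together with the smoothness of matrix inversion on $\bar{\mathbf{S}}^n_+$, giving $P(\cdot)\in C([s,T];\bar{\mathbf{S}}^n_+)$. Differentiating yields $\dot{P}=-P\dot{\Sigma}P$; substituting the right-hand side of (\ref{Sigma-special}) (recall that with $B(\cdot)=I$ and $R(\cdot)+\bar{R}(\cdot+\delta)\equiv 0$ one has $\mathcal{M}\equiv I$) and simplifying term by term using $P\Sigma=\Sigma P=I$ should reproduce exactly the drift of (\ref{P-special}). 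The boundary data match automatically, since $P(T)=\Sigma(T)^{-1}=M^{-1}$ and $P(t)=I^{-1}=I$ for $t\in[s-\delta,s)$.

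For uniqueness, I would run the computation in reverse: any solution $P(\cdot)\in C([s,T];\bar{\mathbf{S}}^n_+)$ of (\ref{P-special}) is invertible, and $\Sigma:=P^{-1}$ then solves (\ref{Sigma-special}) and lies in $C([s,T];\mathbf{S}^n_+)$. Proposition \ref{pro4.1} forces such a $\Sigma(\cdot)$ to be unique, whence $P(\cdot)=\Sigma(\cdot)^{-1}$ is unique as well.

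The genuinely delicate point is the delayed term. In (\ref{P-special}) it appears as $P\bar{B}P^{-1}|_{t-\delta}\bar{B}^\top P$, whereas the computation of $-P\dot{\Sigma}P$ produces $P\bar{B}\Sigma(t-\delta)\bar{B}^\top P$; matching the two requires the pointwise identity $P(t-\delta)^{-1}=\Sigma(t-\delta)$ to hold on the entire interval, including the history segment $[s-\delta,s)$. This is precisely why it matters that $\Sigma\equiv I$ (hence $P\equiv I$) there and, more importantly, that $\Sigma(\cdot)$ stays \emph{positive definite} throughout $[s,T]$ — a property that hinges on $M>0$ and on \textbf{(A5)} via Proposition \ref{pro4.2}(i). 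I expect this invertibility bookkeeping, rather than any single algebraic identity, to be the main source of care in the argument.
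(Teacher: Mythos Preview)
Your proposal is correct and mirrors the paper's own proof almost exactly: the paper invokes Proposition~\ref{pro4.2} to obtain $\Sigma(\cdot)\in C([s,T];\bar{\mathbf{S}}^n_+)$, defines $P(\cdot):=\Sigma(\cdot)^{-1}$, asserts by direct calculation that it solves (\ref{P-special}), and then argues uniqueness by inverting any two solutions $P_1,P_2$ back to solutions of (\ref{Sigma-special}) and appealing to the uniqueness there. Your treatment is in fact more careful than the paper's terse ``by calculating,'' since you explicitly flag the delayed-term bookkeeping $P(t-\delta)^{-1}=\Sigma(t-\delta)$ and the role of the history-interval convention $\Sigma\equiv P\equiv I$ on $[s-\delta,s)$.
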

\begin{proof}
By Proposition \ref{pro4.2}, (\ref{Sigma-special}) is uniquely solvable with the solution $\Sigma(\cdot)\in C([s,T];\bar{\mathbf{S}}^n_+)$, hence $P(\cdot)=\Sigma^{-1}(\cdot)$ is well-defined. By calculating, it is indeed a solution to (\ref{P-special}). As for the uniqueness, suppose $P_1(\cdot),P_2(\cdot)$ are two solutions to (\ref{P-special}), then $P_1^{-1}(\cdot),P_2^{-1}(\cdot)$ are two solutions to (\ref{Sigma-special}). Thus by the uniqueness of solutions to (\ref{Sigma-special}), we have $P_1(\cdot)=P_2(\cdot)$. The proof is complete.
\end{proof}

Proposition \ref{pro4.2} implies that (\ref{delayed Riccati}) admits the unique solution $\Sigma(\cdot)\in C([s,T];\mathbf{S}^n_{+})$ when $R(\cdot)+\bar{R}(\cdot+\delta)\equiv 0$. Next we focus on the Riccati equation (\ref{Riccati without delay}).

\begin{mypro}\label{pro4.6}
Let $\Sigma(\cdot)$ be the solution to (\ref{delayed Riccati}), then Riccati equation (\ref{Riccati without delay}) is uniquely solvable, and
\begin{description}
  \item[(i)] if $G>0$, then $L(\cdot)\in C([s,T];\bar{\mathbf{S}}^n_+)$;
  \item[(ii)] if $G\geq 0$, then $L(\cdot)\in C([s,T];\mathbf{S}^n_+)$.
\end{description}
\end{mypro}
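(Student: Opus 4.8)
The plan is to observe that, with the solution $\Sigma(\cdot)$ of the delayed Riccati equation (\ref{delayed Riccati}) regarded as a fixed coefficient, equation (\ref{Riccati without delay}) is an ordinary symmetric matrix Riccati equation for $L$, running \emph{forward} from $L(s)=2G$ and carrying \emph{no} delay in $L$ itself. Abbreviating $\mathcal{Q}(t):=Q(t)+\bar Q(t+\delta)$ and
$$\mathcal{G}(t):=B\Sigma\mathcal{M}^{-1}B^\top+\bar B\Sigma|_{t-\delta}\mathcal{M}^{-1}|_{t-\delta}\bar B^\top+C\mathcal{N}^{-1}C^\top+\bar C\mathcal{N}^{-1}|_{t-\delta}\bar C^\top,$$
equation (\ref{Riccati without delay}) becomes $\dot L=A^\top L+LA+2\mathcal{Q}-L\mathcal{G}L$, $L(s)=2G$. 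First I would record that the coefficients behave well: by Proposition \ref{pro4.2} the relevant $\Sigma(\cdot)$ lies in $C([s,T];\mathbf{S}^n_+)$ and is therefore bounded, while \textbf{(A3)} gives $\mathcal{N}\geq 2\alpha I>0$ and $R+\bar R|_{t+\delta}\geq 0$; using the identity $\Sigma(2S\Sigma+I)^{-1}=(2\Sigma S+I)^{-1}\Sigma$ for symmetric $S:=R+\bar R|_{t+\delta}\geq 0$ one verifies that $\Sigma\mathcal{M}^{-1}$ is symmetric and positive semi-definite. Consequently $\mathcal{G}(\cdot)\geq 0$ and $\mathcal{Q}(\cdot)\geq 0$ are bounded and $2G\in\mathbf{S}^n$.

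Next I would settle local well-posedness. The right-hand side of the reduced equation is a quadratic matrix polynomial in $L$ with bounded, measurable coefficients, hence locally Lipschitz in $L$; the Carath\'eodory existence--uniqueness theorem then produces a unique maximal solution on some $[s,\tau^*)$, and since the right-hand side maps $\mathbf{S}^n$ into $\mathbf{S}^n$ and $L(s)=2G\in\mathbf{S}^n$, this solution stays symmetric.

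The core of the argument is a representation formula yielding both the sign and global existence. On $[s,\tau^*)$ put $\tilde K(t):=A(t)-\frac{1}{2}\mathcal{G}(t)L(t)$, which is locally integrable there, so that the equation reads as the linear Lyapunov equation $\dot L=\tilde K^\top L+L\tilde K+2\mathcal{Q}$. Letting $\Phi(\cdot)$ be the invertible fundamental solution of $\dot\Phi=\tilde K^\top\Phi$, $\Phi(s)=I$, variation of constants gives
$$L(t)=\Phi(t)\,2G\,\Phi(t)^\top+\int_s^t\Phi(t)\Phi(r)^{-1}\,2\mathcal{Q}(r)\,\big[\Phi(t)\Phi(r)^{-1}\big]^\top dr,\qquad t\in[s,\tau^*).$$
Since $G\geq 0$ and $\mathcal{Q}(\cdot)\geq 0$, every term is positive semi-definite, so $L(t)\geq 0$; when $G>0$ the first term is positive definite (as $\Phi(t)$ is invertible), so $L(t)>0$. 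This establishes (i) and (ii) on $[s,\tau^*)$. To reach all of $[s,T]$ I would compare $L$ with the solution $\bar L$ of the \emph{linear} Lyapunov equation $\dot{\bar L}=A^\top\bar L+\bar L A+2\mathcal{Q}$, $\bar L(s)=2G$, which exists globally on $[s,T]$: the difference $D:=\bar L-L$ solves $\dot D=A^\top D+DA+L\mathcal{G}L$, $D(s)=0$, and its Lyapunov representation with the source $L\mathcal{G}L\geq 0$ forces $D\geq 0$, i.e.\ $0\leq L(t)\leq \bar L(t)$. Thus $L$ stays bounded on $[s,\tau^*)$, so by the continuation theorem it cannot blow up in finite time, whence $\tau^*>T$ and $L(\cdot)\in C([s,T];\mathbf{S}^n)$ with the asserted definiteness; uniqueness is already contained in the local step.

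The hard part will be handling the circularity of the representation: the fundamental matrix $\Phi$ is constructed from $\tilde K$, which itself contains the unknown $L$, so the displayed formula is an a priori identity valid only on the maximal existence interval, not a closed-form solution. The care lies in coupling it correctly with the comparison bound $0\le L\le\bar L$ and the continuation theorem to exclude finite-time blow-up. A secondary technical point is the verification that $\Sigma\mathcal{M}^{-1}$, and its delayed counterpart $\Sigma|_{t-\delta}\mathcal{M}^{-1}|_{t-\delta}$, are symmetric and positive semi-definite, since this positivity of $\mathcal{G}$ is exactly what makes the sign conclusion and the comparison estimate work.
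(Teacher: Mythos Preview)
Your argument is correct, and it takes a genuinely different route from the paper's.

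The paper first performs the time-reversal $\tau=T-t+s$ to rewrite (\ref{Riccati without delay}) in the standard terminal-value form
\[
\dot L=-A^\top L-LA-2(Q+\bar Q|_{t+\delta})+L\mathcal{G}L,\qquad L(T)=2G,
\]
and then simply invokes classical Riccati theory once the quadratic coefficient $\mathcal{G}$ is known to be symmetric positive semi-definite. To obtain the latter, the paper approximates $\Sigma$ by the invertible $\Sigma_i=P_i^{-1}$ from Propositions \ref{pro4.2}--\ref{pro4.3} and writes $\Sigma_i\mathcal{M}_i^{-1}=\big[2(R+\bar R|_{t+\delta})+P_i\big]^{-1}$, passing to the limit $i\to\infty$. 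You instead keep the equation forward, verify $\mathcal{G}\geq 0$ directly from the algebraic identity $\Sigma(2S\Sigma+I)^{-1}=(2\Sigma S+I)^{-1}\Sigma$ (with $S=R+\bar R|_{t+\delta}\geq 0$), and then build the whole existence/positivity/global-bound argument from scratch via the Lyapunov representation and the comparison $0\le L\le\bar L$.

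What this buys: your route is self-contained and elementary, avoiding both the $P_i$-approximation apparatus and the appeal to an external Riccati theorem; in particular, your verification that $\Sigma\mathcal{M}^{-1}\in\mathbf{S}^n_+$ is cleaner. The paper's route is shorter on the page and has the structural advantage that the approximating $P_i$ are exactly the objects reused in Proposition \ref{pro4.7} and in the lower-bound derivation of Section 5, so its argument dovetails with the surrounding material. Either way the essential hinge is the same: once $\mathcal{G}(\cdot)\geq 0$ and $\mathcal{Q}(\cdot)\geq 0$, equation (\ref{Riccati without delay}) is an ordinary (non-delayed) symmetric Riccati equation and everything follows.
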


\begin{proof}
By making the time reversing transformation $\tau=T-t+s$, we have
\begin{eqnarray}\left\{\begin{aligned}\label{L-special}
  \dot{L}=&-A^\top L-LA-2(Q+\bar{Q}|_{t+\delta})+L\big\{B\Sigma\mathcal{M}^{-1}B^\top+C\mathcal{N}^{-1}C^\top\\
          &+\bar{C}\mathcal{N}^{-1}|_{t-\delta}\bar{C}^\top+\bar{B}(\Sigma\mathcal{M}^{-1})|_{t-\delta}
           \bar{B}^\top\big\}L,\quad t\in[s,T],\\
     L(T)=&\ 2G.
\end{aligned}\right.\end{eqnarray}
When $M_i=\frac{1}{i}I,i\in\mathbf{N}^+$, let $\Sigma_i(\cdot)$ and $P_i(\cdot)$ denote the solutions to (\ref{delayed Riccati}) and (\ref{Riccati without delay}), respectively. Noting
\begin{equation*}\begin{aligned}
  &\Sigma(t)\mathcal{M}^{-1}(t)=\Sigma(t)\big[2(R(t)+\bar{R}(t+\delta))\Sigma(t)+I\big]^{-1}\\
  &=\lim\limits_{i\rightarrow\infty}\Sigma_i(t)\big[2(R(t)+\bar{R}(t+\delta))\Sigma_i(t)+I\big]^{-1}=\lim\limits_{i\rightarrow\infty}\big[2(R(t)+\bar{R}(t+\delta))+P_i(t)\big]^{-1},
\end{aligned}\end{equation*}
thus $B(t)\Sigma(t)\mathcal{M}^{-1}(t)B(t)^\top+C(t)\mathcal{N}^{-1}(t)C(t)^\top+\bar{C}(t)\mathcal{N}^{-1}(t-\delta)\bar{C}(t)^\top
+\bar{B}(t)\Sigma(t-\delta)\mathcal{M}^{-1}(t-\delta)\bar{B}(t)^\top$ is symmetric, for all $t\in[s,T]$. Therefore (\ref{L-special}) is a standard Riccati equation and the proof is completed.
\end{proof}

Next we try to analyze the asymptotic behaviour of the above equations with respect to their terminal conditions, which plays an important role in the proof of Theorem \ref{thm3.1} and \ref{thm3.2}.

For $i\in\mathbf{N}^+$, consider the following delayed Riccati equations
\begin{equation}\left\{\begin{aligned}\label{Sigma-ii}
  \dot{\Sigma}_i=&-\Sigma_i A^\top-A\Sigma_i+2\Sigma_i(Q+\bar{Q}|_{t+\delta})\Sigma_i-B\mathcal{R}_i^{-1}B^\top-C\mathcal{N}^{-1}C^\top\\
                 &-\bar{B}\mathcal{R}_i^{-1}|_{t-\delta}\bar{B}^\top-\bar{C}\mathcal{N}^{-1}|_{t-\delta}\bar{C}^\top,\quad t\in[s,T],\\
     \Sigma_i(T)=&\ \frac{1}{2i}I,\ \Sigma(t)=I,\quad t\in[s-\delta,s),
\end{aligned}\right.\end{equation}
\begin{equation}\left\{\begin{aligned}\label{P-ii}
  \dot{P}_i=&\ P_iA+A^\top P_i-2(Q+\bar{Q}|_{t+\delta})+P_iB\mathcal{R}_i^{-1}B^\top P_i+P_iC\mathcal{N}^{-1}C^\top P_i\\
            &+P_i\bar{B}\mathcal{R}_i^{-1}|_{t-\delta}\bar{B}^\top P_i+P_i\bar{C}\mathcal{N}^{-1}|_{t-\delta}\bar{C}^\top P_i,\quad t\in[s,T],\\
     P_i(T)=&\ 2iI,\ P_i(t)=I,\quad t\in[s-\delta,s),
\end{aligned}\right.\end{equation}
where $\mathcal{R}_i(\cdot):=2R(\cdot)+2\bar{R}(\cdot+\delta)+P_i(\cdot)$. Apparently $\Sigma_i(\cdot)=P^{-1}_i(\cdot)$. By Proposition \ref{pro4.2} and \ref{pro4.3}, suppose \textbf{(A4)} and \textbf{(A5)} hold, $B(\cdot)=I$, $R(\cdot)+\bar{R}(\cdot+\delta)\equiv 0$, then the delayed Riccati equations (\ref{Sigma-ii}) and (\ref{P-ii}) admit unique solutions, respectively. Since our aim is to analyze the asymptotic behaviour of the above equations, we discard the conditions guaranteeing the existence and uniqueness of their solutions in the following text.

\begin{mypro}\label{pro4.7}
For $i\in\mathbf{N}^+$, let $\Sigma_i(\cdot)$ and $\Sigma(\cdot)$ be the solutions to (\ref{Sigma-ii}) and (\ref{delayed Riccati}), respectively, then $\Sigma_i(\cdot)\rightarrow\Sigma(\cdot)$, uniformly on $[s,T]$ as $i\rightarrow \infty$.
\end{mypro}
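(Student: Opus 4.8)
The plan is to read Proposition \ref{pro4.7} as a \emph{continuous dependence on the terminal condition} statement for the delayed Riccati equation, since (\ref{Sigma-ii}) and (\ref{delayed Riccati}) are in fact the same equation with different terminal data. First I would put them into a common form. Because $\Sigma_i(\cdot)=P_i^{-1}(\cdot)$ is invertible, exactly as in the identity used in the proof of Proposition \ref{pro4.6} one has $\mathcal{R}_i^{-1}=[2(R+\bar{R}|_{t+\delta})+P_i]^{-1}=\Sigma_i\mathcal{M}_i^{-1}$ with $\mathcal{M}_i:=2(R+\bar{R}|_{t+\delta})\Sigma_i+I$, and likewise at $t-\delta$. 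Substituting this into (\ref{Sigma-ii}) shows that $\Sigma_i(\cdot)$ solves precisely (\ref{delayed Riccati}) (with $\Sigma\mathcal{M}^{-1}$ written out), the sole difference being the terminal value $\frac{1}{2i}I$ for $\Sigma_i$ against $0$ for $\Sigma$, while both carry the history $\Sigma\equiv I$ on $[s-\delta,s)$. It therefore suffices to estimate $\Delta_i:=\Sigma_i-\Sigma$, which satisfies a delayed matrix ODE with $\Delta_i(T)=\frac{1}{2i}I$ and $\Delta_i\equiv 0$ on $[s-\delta,s)$.

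Next I would establish a Lipschitz bound for the drift $F(\Sigma,\Sigma|_{t-\delta})$ of (\ref{delayed Riccati}) on a fixed ball $\{|\Sigma|\le\rho\}$. The quadratic term linearizes as $\Sigma_i(Q+\bar{Q}|_{t+\delta})\Sigma_i-\Sigma(Q+\bar{Q}|_{t+\delta})\Sigma=\Delta_i(Q+\bar{Q}|_{t+\delta})\Sigma_i+\Sigma(Q+\bar{Q}|_{t+\delta})\Delta_i$, and the rational terms are controlled by the resolvent identity $\Sigma_i\mathcal{M}_i^{-1}-\Sigma\mathcal{M}^{-1}=\Delta_i\mathcal{M}_i^{-1}-2\Sigma\mathcal{M}_i^{-1}(R+\bar{R}|_{t+\delta})\Delta_i\mathcal{M}^{-1}$ together with its $t-\delta$ counterpart. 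Since $R+\bar{R}|_{t+\delta}\ge0$ and $\Sigma_i,\Sigma\ge0$, the matrices $\mathcal{M}_i,\mathcal{M}$ are invertible with inverses bounded uniformly on $\{|\Sigma|\le\rho\}$; combined with \textbf{(A1)} and \textbf{(A3)}, this yields a constant $L=L(\rho)$ with $|F(\Sigma_1,\theta_1)-F(\Sigma_2,\theta_2)|\le L(|\Sigma_1-\Sigma_2|+|\theta_1-\theta_2|)$ uniformly in $t$.

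With this in hand the convergence follows by the weighted-energy argument already used for Proposition \ref{pro4.1}. Applying the product rule to $e^{\beta t}|\Delta_i(t)|^2$, integrating over $[t,T]$, inserting the Lipschitz bound $|\dot\Delta_i|\le L(|\Delta_i(t)|+|\Delta_i(t-\delta)|)$, and treating the delayed term by the shift $\int_t^Te^{\beta r}|\Delta_i(r-\delta)|^2dr=e^{\beta\delta}\int_{t-\delta}^{T-\delta}e^{\beta r}|\Delta_i(r)|^2dr\le e^{\beta\delta}\int_s^Te^{\beta r}|\Delta_i(r)|^2dr$ (valid because $\Delta_i\equiv0$ on $[s-\delta,s)$), one may choose $\beta$ large enough --- possible since $\delta$ is small, just as in Proposition \ref{pro4.1} --- to absorb the $L$-dependent coefficients. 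This gives first $\int_s^Te^{\beta r}|\Delta_i(r)|^2dr\le Ci^{-2}$ and then the pointwise bound $\sup_{s\le t\le T}|\Delta_i(t)|\le C'i^{-1}$, with $C'$ independent of $i$, the driving term being $e^{\beta T}|\Delta_i(T)|^2=O(i^{-2})$.

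The main obstacle is that $L(\rho)$ is only a \emph{local} Lipschitz constant whereas the Riccati drift grows quadratically, so a priori one does not know that the whole family $\{\Sigma_i\}$ remains in a fixed ball, and the delay blocks a naive backward continuation. I would break this circularity by a truncation: replace $F$ by a globally Lipschitz field $F_\rho$ agreeing with $F$ on $\{|\Sigma|\le\rho\}$, where $\rho:=1+2\sup_{[s,T]}|\Sigma|$ (finite since $\Sigma\in C([s,T];\mathbf{S}^n_+)$). The truncated equation is uniquely solvable on $[s,T]$ by the contraction argument of Theorem \ref{thm2.1}, and $\Sigma$ itself solves it; the estimate above, now globally valid, gives $\sup_{[s,T]}|\tilde{\Sigma}_i-\Sigma|\le C'i^{-1}$ for the truncated solutions $\tilde{\Sigma}_i$. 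For $i$ large this keeps $\tilde{\Sigma}_i$ strictly inside the ball where $F_\rho=F$, so $\tilde{\Sigma}_i$ solves the original equation and, by the uniqueness in Proposition \ref{pro4.1}, equals $\Sigma_i$. Hence $\sup_{[s,T]}|\Sigma_i-\Sigma|\le C'i^{-1}\to0$, which is the claimed uniform convergence. Alternatively, if a comparison principle monotone in the terminal data could be proved for these delayed Riccati equations, then $\Sigma\le\Sigma_{i+1}\le\Sigma_i$ would furnish the uniform bound directly and Dini's theorem would upgrade the pointwise limit to a uniform one.
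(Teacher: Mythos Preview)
Your proposal is correct and follows essentially the same weighted-energy route as the paper: form $\Delta_i=\Sigma_i-\Sigma$, differentiate $e^{\beta t}|\Delta_i(t)|^2$, shift the delayed term by $\int_s^Te^{\beta r}|\Delta_i(r-\delta)|^2dr\le e^{\beta\delta}\int_s^Te^{\beta r}|\Delta_i(r)|^2dr$, and choose $\beta$ large (using that $\delta$ is small) to absorb the coefficients and obtain $\sup_{[s,T]}|\Delta_i|=O(i^{-1})$.

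The one substantive difference is how the uniform a-priori bound on $\{\Sigma_i\}$ is obtained. The paper does exactly your ``alternative'': it introduces the solution $\bar\Sigma$ with terminal value $I$ and asserts the comparison $\Sigma_i<\bar\Sigma$ for all $i$, so a single $\rho$ works from the outset. Your primary route --- truncate the drift on a ball around $\Sigma$, prove convergence for the truncated problem, then invoke the uniqueness of Proposition \ref{pro4.1} to identify $\tilde\Sigma_i$ with $\Sigma_i$ for large $i$ --- is a valid self-contained substitute that avoids having to justify a comparison principle for the delayed Riccati equation. The paper also specializes to $R+\bar R|_{t+\delta}\equiv0$ ``for simplicity of writing'' and declares the general case similar; your resolvent identity $\Sigma_i\mathcal M_i^{-1}-\Sigma\mathcal M^{-1}=\Delta_i\mathcal M_i^{-1}-2\Sigma\mathcal M_i^{-1}(R+\bar R|_{t+\delta})\Delta_i\mathcal M^{-1}$ handles the general case directly.
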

\begin{proof}
For simplicity of writing, we consider the simple case: $R(t)+\bar{R}(t+\delta)=0$ for $t\in[s-\delta,T]$. For the general case $R(t)+\bar{R}(t+\delta)\geq0$ for $t\in[s-\delta,T]$, the proof is similar.
For $i\in\mathbf{N}^+$, denote $\Delta_i(\cdot):=\Sigma_i(\cdot)-\Sigma(\cdot)$, then we have
\begin{equation*}\left\{\begin{aligned}
  \dot{\Delta}_i=&\ \big[-A+2\Sigma(Q+\bar{Q}|_{t+\delta})\big]\Delta_i+\Delta_i\big[-A+2\Sigma(Q+\bar{Q}|_{t+\delta})\big]^\top\\
                 &+2\Delta_i(Q+\bar{Q}|_{t+\delta})\Delta_i-B\Delta_iB^\top-\bar{B}\Delta_i|_{t-\delta}\bar{B}^\top,\quad t\in[s,T],\\
     \Delta_i(T)=&\ \frac{1}{2i}I,\ \Delta_i(t)=0,\quad t\in[s-\delta,s).
\end{aligned}\right.\end{equation*}
Apparently it is a delayed Riccati equation, and $\Delta_i(\cdot)\in C([s,T];\bar{\mathbf{S}}^n_+)$. Let $\bar{\Sigma}(\cdot)$ be the solution to (\ref{Sigma-ii}) with the terminal condition $\bar{\Sigma}(T)=I$. It is easy to verify that $\Sigma_i(\cdot)<\bar{\Sigma}(\cdot)$ for all $i\in\mathbf{N}^+$. For any $\beta>0$, applying It\^o's formula to $e^{\beta t}|\Delta_i(t)|^2$, we have
\begin{equation}\begin{aligned}\label{eq4.24}
  e^{\beta t}|\Delta_i(t)|^2=e^{\beta T}\frac{1}{4i^2}-\int_t^Te^{\beta r}\big[\beta|\Delta_i(r)|^2+2\langle\Delta_i(r),\dot{\Delta}_i(r)\rangle\big]dr.
\end{aligned}\end{equation}
Thus we derive
\begin{equation*}\begin{aligned}
       &\beta\int_t^Te^{\beta r}|\Delta_i(r)|^2dr\leq \frac{1}{4i^2}e^{\beta T}+\sup\limits_{s\leq r\leq T}\Big\{4|A(r)|+8|\Sigma(r)||Q(r)+\bar{Q}(r+\delta)|\\
       &\quad+4|\Sigma(r)+\bar{\Sigma}(r)||Q(r)+\bar{Q}(r+\delta)|+2|B(r)|^2\Big\}\int_t^Te^{\beta r}|\Delta_i(r)|^2dr\\
       &\quad+(1+e^{\beta\delta})\sup\limits_{s\leq r\leq T}|\bar{B}(r)|^2\int_t^Te^{\beta r}|\Delta_i(r)|^2dr
        +e^{\beta\delta}\sup\limits_{s\leq r\leq T}|\bar{B}(r)|^2\int_{t-\delta}^te^{\beta r}|\Delta_i(r)|^2dr.
\end{aligned}\end{equation*}
We can choose $\beta$ sufficiently large, since $\delta$ is sufficiently small. Thus we get $\int_s^Te^{\beta r}|\Delta_i(r)|^2dr\leq\frac{K}{4i^2}e^{\beta T}$ for some constant $K>0$. Therefore, $\lim\limits_{i\rightarrow\infty}\int_s^Te^{\beta r}|\Delta_i(r)|^2dr=0$, hence $\lim\limits_{i\rightarrow\infty}\Delta_i(r)=0$, a.e. $r\in[s,T]$. Substituting this into (\ref{eq4.24}), we get
$$\lim\limits_{i\rightarrow\infty}\sup\limits_{s\leq t\leq T}e^{\beta t}|\Delta_i(t)|^2=0,$$
thus the result is proved.
\end{proof}

\section{Proofs of Theorem \ref{thm3.1} and Theorem \ref{thm3.2} }

In this section, we proceed to complete the proofs of Theorem \ref{thm3.1} and Theorem \ref{thm3.2}. The basic idea is first to find the lower bound of the cost and next to prove that the control $u^*(\cdot)$ of (\ref{optimal control-1}) achieves exactly its lower bound.

For $i\in\mathbf{N}^+$, recall delayed Riccati equations (\ref{Sigma-ii}), (\ref{P-ii}) and consider the DABSDE:
\begin{equation}\left\{\begin{aligned}\label{linear DABSDE}
  d\bar{X}_i(t)=&\Big\{\big[A^\top-2(Q+\bar{Q}|_{t+\delta})\Sigma_i\big]\bar{X}_i+2(Q+\bar{Q}|_{t+\delta})\Lambda_i
                 +\mathbb{E}^{\mathcal{F}_t}\big[(\bar{A}^\top\bar{X}_i)|_{t+\delta}\big]\Big\}dt\\
                &\ +\Big\{\big[B^\top-2[R+\bar{R}|_{t+\delta}]\mathcal{R}_i^{-1}B^\top\big]\bar{X}_i
                 +\mathbb{E}^{\mathcal{F}_t}\big\{\big[(\bar{B}^\top\bar{X}_i)|_{t+\delta}-2(R+\bar{R}|_{t+\delta})\mathcal{R}_i^{-1}\\
                &\ \times(\bar{B}^\top\bar{X}_i)|_{t+\delta}\big\}+2(R+\bar{R}|_{t+\delta})\big[2\Sigma_i(R+\bar{R}|_{t+\delta})+I\big]^{-1}\Gamma_i\Big\}dW(t),\\
  d\Lambda_i(t)=&\Big\{-A\Lambda_i+2P_i^{-1}(Q+\bar{Q}|_{t+\delta})\Lambda_i-B\mathcal{R}_i^{-1}P_i\Gamma_i
                 +P_i^{-1}\mathbb{E}^{\mathcal{F}_t}\big[(\bar{A}^\top P_i\Lambda_i)|_{t+\delta}\big]\\
                &+B\mathcal{R}_i^{-1}\mathbb{E}^{\mathcal{F}_t}\big[(\bar{B}^\top P_i\Lambda_i)|_{t+\delta}\big]
                 +C\mathcal{N}^{-1}\mathbb{E}^{\mathcal{F}_t}\big[(\bar{C}^\top P_i\Lambda_i)|_{t+\delta}\big]-\bar{B}(\mathcal{R}_i^{-1}P_i\Gamma_i)|_{t-\delta}\\
                &+\bar{B}(\mathcal{R}_i^{-1}B^\top P_i\Lambda_i)|_{t-\delta}
                 +\bar{C}(\mathcal{N}^{-1}C^\top P_i\Lambda_i)|_{t-\delta}+\big[\bar{B}\mathcal{R}_i^{-1}|_{t-\delta}\bar{B}^\top\\
                &+\bar{C}\mathcal{N}^{-1}|_{t-\delta}\bar{C}^\top\big]\big[\mathbb{E}^{\mathcal{F}_{t-\delta}}(\bar{X}_i)-\bar{X}_i\big]\Big\}dt+\Gamma_i(t)dW(t),\quad t\in[s,T],\\
   \bar{X}_i(s)=&\ 2G(I+2\Sigma_i(s)G)^{-1}\Lambda_i(s),\ \bar{X}_i(t)=0,\quad t\in(T,T+\delta],\\
   \Lambda_i(T)=&-\xi,\ \Lambda_i(t)=0,\ \Gamma_i(t)=0,\ t\in[s-\delta,s),\ \Lambda_i(t)=0,\ t\in(T,T+\delta].
\end{aligned}\right.\end{equation}
For more information about DABSDE, refer to He et al. \cite{HRZ20}. Similar to (\ref{linear delayed and time-advanced BSDE}), now we put aside the study of solvability of (\ref{linear DABSDE}) and just assume (\ref{linear DABSDE}) has a unique solution $(\bar{X}_i(\cdot),\Lambda_i(\cdot),\Gamma_i(\cdot))\in L^2_{\mathcal{F}}(\Omega;C([s,T];\mathbf{R}^n))\times L^2_{\mathcal{F}}(\Omega;C([s,T];\mathbf{R}^n))\times L^2_{\mathcal{F}}([s,T];\mathbf{R}^n)$.

Applying It\^o's formula to $\big\langle P_i(\cdot)(Y(\cdot)+\Lambda_i(\cdot)),Y(\cdot)+\Lambda_i(\cdot)\big\rangle$, we have (suppressing $t$)
\begin{equation*}\begin{aligned}
&d\big\langle P_i(Y+\Lambda_i),Y+\Lambda_i\big\rangle=\Big\{\big\langle Y+\Lambda_i,\big[P_iA+A^\top P_i-2(Q+\bar{Q}|_{t+\delta})+P_iB\mathcal{R}_i^{-1}B^\top P_i\\
&\quad+P_iC\mathcal{N}^{-1}C^\top P_i+P_i\bar{B}\mathcal{R}_i^{-1}|_{t-\delta}\bar{B}^\top P_i+P_i\bar{C}\mathcal{N}^{-1}|_{t-\delta}\bar{C}^\top P_i\big](Y+\Lambda_i)\big\rangle\\
&\quad+2\big\langle P_i(Y+\Lambda_i),-AY-\bar{A}Y|_{t-\delta}-BZ-\bar{B}Z|_{t-\delta}-Cu-\bar{C}u|_{t-\delta}-A\Lambda_i\\
&\quad+2P_i^{-1}(Q+\bar{Q}|_{t+\delta})\Lambda_i-B\mathcal{R}_i^{-1}P_i\Gamma_i
 +P_i^{-1}\mathbb{E}^{\mathcal{F}_t}\big[(\bar{A}^\top P_i\Lambda_i)|_{t+\delta}\big]
 +B\mathcal{R}_i^{-1}\mathbb{E}^{\mathcal{F}_t}\big[(\bar{B}^\top P_i\Lambda_i)|_{t+\delta}\big]\\
&\quad+C\mathcal{N}^{-1}\mathbb{E}^{\mathcal{F}_t}\big[(\bar{C}^\top P_i\Lambda_i)|_{t+\delta}\big]-\bar{B}(\mathcal{R}_i^{-1}P_i\Gamma_i)|_{t-\delta}
 +\bar{B}(\mathcal{R}_i^{-1}B^\top P_i\Lambda_i)|_{t-\delta}\\
&\quad+\bar{C}(\mathcal{N}^{-1}C^\top P_i\Lambda_i)|_{t-\delta}+\{\bar{B}\mathcal{R}_i^{-1}|_{t-\delta}\bar{B}^\top+\bar{C}\mathcal{N}^{-1}|_{t-\delta}\bar{C}^\top\}[\mathbb{E}^{\mathcal{F}_{t-\delta}}(\bar{X}_i)-\bar{X}_i]\big\rangle\\
&\quad+\big\langle P_i(Z+\Gamma_i),Z+\Gamma_i\big\rangle\Big\}dt+2\big\langle P_i(Y+\Lambda_i),Z+\Gamma_i\big\rangle dW(t).
\end{aligned}\end{equation*}
Since $\bar{B}(t)=\bar{C}(t)=0$, $t\in[s,s+\delta]$ and $\bar{B}(t)=\bar{C}(t)=\bar{Q}(t)=\bar{R}(t)=\bar{N}(t)=0$, $t\in[T,T+\delta]$, we get
\begin{equation*}\begin{aligned}
 &\mathbb{E}\int_s^T\Big[2\big\langle\bar{Q}(t)Y(t-\delta),Y(t-\delta)\big\rangle+2\big\langle\bar{R}(t)Z(t-\delta),Z(t-\delta)\big\rangle
  +2\big\langle\bar{N}(t)u(t-\delta),u(t-\delta)\big\rangle\\
 &\qquad\quad-2\langle Y(t)+\Lambda_i(t),P_i(t)\bar{B}(t)Z(t-\delta)+P_i(t)\bar{C}(t)u(t-\delta)\rangle\Big]dt\\
=&\ \mathbb{E}\int_{s}^{T}\Big[2\big\langle\bar{Q}(t+\delta)Y(t),Y(t)\big\rangle+2\big\langle\bar{R}(t+\delta)Z(t),Z(t)\big\rangle
  +2\big\langle\bar{N}(t+\delta)u(t),u(t)\big\rangle\\
 &\qquad\quad-2\big\langle Y(t+\delta)+\Lambda_i(t+\delta),P_i(t+\delta)\bar{B}(t+\delta)Z(t)+P_i(t+\delta)\bar{C}(t+\delta)u(t)\big\rangle\Big]dt\\
 &+\mathbb{E}\int_{s-\delta}^s\Big[2\big\langle\bar{Q}(t+\delta)\varphi(t),\varphi(t)\big\rangle+2\langle\bar{R}(t+\delta)\psi(t),\psi(t)\big\rangle
  +2\langle\bar{N}(t+\delta)\eta(t),\eta(t)\big\rangle\Big]dt.
\end{aligned}\end{equation*}
Similarly, noting $\bar{A}(t)=\bar{B}(t)=\bar{C}(t)=0$ for $t\in[s,s+\delta]$, we have
\begin{equation*}\begin{aligned}
&\mathbb{E}\int_s^T\big\langle Y(t)+\Lambda_i(t),\big[P_i(t)\bar{B}(t)\mathcal{R}_i^{-1}(t-\delta)\bar{B}(t)^\top P_i(t)
 +P_i(t)\bar{C}(t)\mathcal{N}^{-1}(t-\delta)\bar{C}(t)^\top P_i(t)\big]\\
&\qquad\times(Y(t)+\Lambda_i(t))-2P_i(t)\bar{A}(t)Y(t-\delta)-2P_i(t)\bar{B}(t)\mathcal{R}_i^{-1}(t-\delta)P_i(t-\delta)\Gamma_i(t-\delta)\\
&\qquad+2P_i(t)\bar{B}(t)\mathcal{R}_i^{-1}(t-\delta)B(t-\delta)^\top P_i(t-\delta)\Lambda_i(t-\delta)\\
&\qquad+2P_i(t)\bar{C}(t)\mathcal{N}^{-1}(t-\delta)C(t-\delta)^\top P_i(t-\delta)\Lambda_i(t-\delta)\big\rangle dt\\
&=\mathbb{E}\int_{s}^{T}\big\langle Y(t+\delta)+\Lambda_i(t+\delta),\big[P_i(t+\delta)\bar{B}(t+\delta)\mathcal{R}_i^{-1}(t)\bar{B}(t+\delta)^\top P_i(t+\delta)\\
&\qquad+P_i(t+\delta)\bar{C}(t+\delta)\mathcal{N}^{-1}(t)\bar{C}(t+\delta)^\top P_i(t+\delta)\big](Y(t+\delta)+\Lambda_i(t+\delta))\\
&\qquad-2P_i(t+\delta)\bar{A}(t+\delta)Y(t)-2P_i(t+\delta)\bar{B}(t+\delta)\mathcal{R}_i^{-1}(t)P_i(t)\Gamma_i(t)+2P_i(t+\delta)\bar{B}(t+\delta)\\
&\qquad\times\mathcal{R}_i^{-1}(t)B(t)^\top P_i(t)\Lambda_i(t)+2P_i(t+\delta)\bar{C}(t+\delta)\mathcal{N}^{-1}(t)C(t)^\top P_i(t)\Lambda_i(t)\big\rangle dt.
\end{aligned}\end{equation*}
Since (\ref{important equality}) holds, combining the above, we obtain
\begin{equation}\begin{aligned}\label{look for lower bound-1}
&2J(s,\xi;u(\cdot))=\mathbb{E}\bigg\{2\big\langle\bar{G}Y(s-\delta),Y(s-\delta)\big\rangle+2\big\langle\Lambda_i(s),(I+2\Sigma_i(s)G)^{-1}G\Lambda_i(s)\big\rangle\\
&\ +\big\langle(P_i(s)+2G)\big[Y(s)+(I+2\Sigma_i(s)G)^{-1}\Lambda_i(s)\big],Y(s)+(I+2\Sigma_i(s)G)^{-1}\Lambda_i(s)\big\rangle\\
&\ +\int_{s-\delta}^s\Big[2\big\langle\bar{Q}(t+\delta)\varphi(t),\varphi(t)\big\rangle+2\big\langle\bar{R}(t+\delta)\psi(t),\psi(t)\big\rangle
 +2\langle\bar{N}(t+\delta)\eta(t),\eta(t)\big\rangle\Big]dt\\
&\ +\int_s^T\Big[2\big\langle[Q(t)+\bar{Q}(t+\delta)]\Lambda_i(t),\Lambda_i(t)\big\rangle
 +2\big\langle[R(t)+\bar{R}(t+\delta)]\mathcal{R}_i^{-1}(t)P_i(t)\Gamma_i(t),\Gamma_i(t)\big\rangle\\
&\quad+\Big\langle\mathcal{N}(t)\Big\{u(t)-\mathcal{N}^{-1}(t)\big[C(t)^\top P_i(t)(Y(t)+\Lambda_i(t))+\bar{C}(t+\delta)^\top P_i(t+\delta)(Y(t+\delta)\\
&\quad+\Lambda_i(t+\delta))\big]\Big\},u(t)-\mathcal{N}^{-1}(t)\big[C(t)^\top P_i(t)(Y(t)+\Lambda_i(t))+\bar{C}(t+\delta)^\top P_i(t+\delta)\\
&\quad\times(Y(t+\delta)+\Lambda_i(t+\delta))\big]\Big\rangle+\Big\langle\mathcal{R}_i(t)\Big\{Z(t)-\mathcal{R}_i^{-1}(t)\big[B(t)^\top P_i(t)(Y(t)+\Lambda_i(t))\\
&\quad+\bar{B}(t+\delta)^\top P_i(t+\delta)(Y(t+\delta)+\Lambda_i(t+\delta))-P_i(t)\Gamma_i(t)\big]\Big\},Z(t)-\mathcal{R}_i^{-1}(t)\big[B(t)^\top\\
&\quad\times P_i(t)(Y(t)+\Lambda_i(t))+\bar{B}(t+\delta)^\top P_i(t+\delta)(Y(t+\delta)+\Lambda_i(t+\delta))-P_i(t)\Gamma_i(t)\big]\Big\rangle\Big]dt\bigg\}.
\end{aligned}\end{equation}
By (\textbf{A3}), $P_i(\cdot)+2G\geq 0$, $\mathcal{R}_i(\cdot)>0$, $\mathcal{N}(\cdot)>0$, it follows that
\begin{equation}\begin{aligned}\label{look for lower bound-2}
&J(s,\xi;u(\cdot))\geq\mathbb{E}\bigg\{\big\langle\bar{G}Y(s-\delta),Y(s-\delta)\big\rangle+\big\langle\Lambda_i(s),(I+2\Sigma_i(s)G)^{-1}G\Lambda_i(s)\big\rangle\\
&\ +\int_{s-\delta}^s\Big[\big\langle\bar{Q}(t+\delta)\varphi(t),\varphi(t)\big\rangle
 +\big\langle\bar{R}(t+\delta)\psi(t),\psi(t)\big\rangle+\big\langle\bar{N}(t+\delta)\eta(t),\eta(t)\big\rangle\Big]dt\\
&\ +\int_s^T\Big[\big\langle\big[R(t)+\bar{R}(t+\delta)\big]\big[2\Sigma_i(t)(R(t)+\bar{R}(t+\delta))+I\big]^{-1}\Gamma_i(t),\Gamma_i(t)\big\rangle\\
&\qquad+\big\langle(Q(t)+\bar{Q}(t+\delta))\Lambda_i(t),\Lambda_i(t)\big\rangle\Big]dt\bigg\}.
\end{aligned}\end{equation}
Thus the right hand of (\ref{look for lower bound-2}) is independent of $P_i(\cdot)$. Therefore, letting $i\rightarrow\infty$, by Proposition \ref{pro4.7} we derive
\begin{equation}\begin{aligned}\label{look for lower bound-3}
&J(s,\xi;u(\cdot))\geq\mathbb{E}\bigg\{\big\langle\bar{G}Y(s-\delta),Y(s-\delta)\big\rangle+\big\langle\Lambda(s),(I+2\Sigma(s)G)^{-1}G\Lambda(s)\big\rangle\\
&\ +\int_{s-\delta}^s\Big[\big\langle\bar{Q}(t+\delta)\varphi(t),\varphi(t)\big\rangle
 +\big\langle\bar{R}(t+\delta)\psi(t),\psi(t)\big\rangle+\big\langle\bar{N}(t+\delta)\eta(t),\eta(t)\big\rangle\Big]dt\\
&\ +\int_s^T\Big[\big\langle\big[R(t)+\bar{R}(t+\delta)\big]\big[2\Sigma(t)(R(t)+\bar{R}(t+\delta))+I\big]^{-1}\Gamma(t),\Gamma(t)\big\rangle\\
&\qquad+\big\langle(Q(t)+\bar{Q}(t+\delta))\Lambda(t),\Lambda(t)\big\rangle\Big]dt\bigg\}.
\end{aligned}\end{equation}

Now we have found the lower bound of the cost, next we will look for a control which achieves exactly this lower bound. First we introduce the stochastic Hamilonian system as follows:
\begin{equation}\left\{\begin{aligned}\label{stochastic Hamilonian-*}
 dX^*(t)=&\Big\{A(t)^\top X^*(t)-2(Q(t)+\bar{Q}(t+\delta))Y^*(t)+\mathbb{E}^{\mathcal{F}_t}\big[(\bar{A}^\prime X^*)|_{t+\delta}\big]\Big\}dt\\
         &+\Big\{B(t)^\top X^*(t)-2[R(t)+\bar{R}(t+\delta)]Z^*(t)+\mathbb{E}^{\mathcal{F}_t}\big[(\bar{B}^\top X^*)|_{t+\delta}\big]\Big\}dW(t),\\
-dY^*(t)=&\Big\{A(t)Y^*(t)+\bar{A}(t)Y^*(t-\delta)+B(t)Z^*(t)+\bar{B}(t)Z^*(t-\delta)\\
         &\ +C(t)\mathcal{N}^{-1}(t)\big\{C(t)^\top X^*(t)+\mathbb{E}^{\mathcal{F}_t}\big[(\bar{C}^\top X^*)|_{t+\delta}\big]\big\}\\
         &\ +\bar{C}(t)\mathcal{N}^{-1}(t-\delta)\big\{C(t-\delta)^\top X^*(t-\delta)+\mathbb{E}^{\mathcal{F}_{t-\delta}}\big[\bar{C}(t)^\top X^*(t)\big]\big\}\Big\}dt\\
         &-Z^*(t)dW(t),\quad t\in[s,T],\\
  X^*(s)=&-2GY^*(s),\ X^*(t)=0,\quad t\in(T,T+\delta],\\
  Y^*(T)=&\ \xi,\ Y^*(t)=\varphi(t),\ Z^*(t)=\psi(t),\quad t\in[s-\delta,s).
\end{aligned}\right.\end{equation}
\begin{mypro}\label{pro5.1}
Suppose $\bar{A}(t)=\bar{B}(t)=\bar{C}(t)=0$ for $t\in[s,s+\delta]$ and (\ref{important equality}) holds, and suppose $\Sigma(\cdot)\in C([s,T];\textbf{S}_+^n)$, $L(\cdot)\in C([s,T];\textbf{S}_+^n)$, $(\bar{X}(\cdot),\Lambda(\cdot),\Gamma(\cdot))$ $\in$ $ L^2_{\mathcal{F}}(\Omega;C([s,T];\textbf{R}^n))$ $\times L^2_{\mathcal{F}}(\Omega;C([s,T];\textbf{R}^n))\times L_{\mathcal{F}}^2([s,T];\textbf{R}^n)$ are the solutions to (\ref{delayed Riccati}), (\ref{Riccati without delay}), (\ref{linear delayed and time-advanced BSDE}), respectively. Then the stochastic Hamilton system (\ref{stochastic Hamilonian-*}) is uniquely solvable. Moreover, for $t\in[s,T]$, the following relationships hold:
\begin{eqnarray}\left\{\begin{aligned}\label{relations}
Y^*(t)=&\ \Sigma(t)X^*(t)-\Lambda(t),\\
Z^*(t)=&\ \Sigma(t)\mathcal{M}^{-1}(t)\big\{B(t)^\top X^*(t)+\mathbb{E}^{\mathcal{F}_t}\big[(\bar{B}^\top X^*)|_{t+\delta}\big]\big\}\\
      &-\big[2\Sigma(t)(R(t)+\bar{R}(t+\delta))+I\big]^{-1}\Gamma(t),\quad t\in[s,T],\\
Y^*(s)=&-(I+2\Sigma(s)G)^{-1}\Lambda(s).
\end{aligned}\right.\end{eqnarray}
\end{mypro}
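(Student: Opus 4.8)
The plan is to establish both assertions — unique solvability of (\ref{stochastic Hamilonian-*}) and the decoupling identities (\ref{relations}) — together, by a verification argument built on an ansatz that reduces everything to the well-posedness of the ASDDE already proved in Theorem \ref{thm2.2}.

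For \textbf{existence}, I would posit the two affine decoupling relations $Y^*(t)=\Sigma(t)X^*(t)-\Lambda(t)$ and $X^*(t)=-L(t)Y^*(t)+S(t)$, where $S(\cdot)$ is the unique solution of the ASDDE (\ref{linear ASDDE}) supplied by Theorem \ref{thm2.2}. Eliminating $Y^*$ gives the closed-form expression $X^*(t)=[I+L(t)\Sigma(t)]^{-1}[L(t)\Lambda(t)+S(t)]$, which is well defined since $\Sigma(\cdot),L(\cdot)\in C([s,T];\mathbf{S}^n_+)$ forces $L\Sigma$ to have nonnegative eigenvalues, so $I+L\Sigma$ is invertible. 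At $t=s$ one checks, using $L(s)=2G$, $S(s)=0$ and the matrix identity $(I+2G\Sigma(s))^{-1}2G=2G(I+2\Sigma(s)G)^{-1}$, that this reproduces $X^*(s)=2G(I+2\Sigma(s)G)^{-1}\Lambda(s)=-2GY^*(s)$, matching the boundary data of (\ref{stochastic Hamilonian-*}) and the third line of (\ref{relations}). I would then \emph{define} $X^*,Y^*$ by these formulas and $Z^*$ by the second line of (\ref{relations}), and verify directly that the triple solves (\ref{stochastic Hamilonian-*}).

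The heart of the proof, and the main obstacle, is the \textbf{coefficient matching}. Applying It\^o's formula to $Y^*(t)=\Sigma(t)X^*(t)-\Lambda(t)$, using the forward dynamics of $X^*$, the equation (\ref{delayed Riccati}) for $\dot{\Sigma}$, and the $\Lambda$-line of (\ref{linear delayed and time-advanced BSDE}) for $d\Lambda$, I would first match the $dW$-coefficients. Since $\mathcal{M}=2(R+\bar{R}|_{t+\delta})\Sigma+I$ and the symmetry of $R,\bar{R},\Sigma$ give the commutation identity $\Sigma\mathcal{M}^{-1}=[2\Sigma(R+\bar{R}|_{t+\delta})+I]^{-1}\Sigma$, solving the resulting linear relation for $Z^*$ yields exactly the second line of (\ref{relations}). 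Matching the $dt$-coefficients is the delicate step: after substituting $Y^*=\Sigma X^*-\Lambda$, the control formula (\ref{optimal control-2}), and the just-derived $Z^*$, the terms quadratic in $X^*$ must cancel precisely because $\Sigma$ solves (\ref{delayed Riccati}), while the affine remainder must cancel because $(\bar{X},\Lambda,\Gamma)$ solves (\ref{linear delayed and time-advanced BSDE}). The genuine work is the bookkeeping of delay and time-advance terms: delayed arguments $(\cdot)|_{t-\delta}$ coming from the backward equation must be reconciled with advanced arguments $(\cdot)|_{t+\delta}$ and the conditional expectations $\mathbb{E}^{\mathcal{F}_t}[\cdot]$ appearing both in the forward equation and in (\ref{linear delayed and time-advanced BSDE}), via the tower property and the index-shift identities used throughout the paper. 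The hypotheses $\bar{A}(t)=\bar{B}(t)=\bar{C}(t)=0$ on $[s,s+\delta]$ together with the vanishing of the barred coefficients on $[T,T+\delta]$ guarantee that the boundary contributions from these shifts drop out, and the algebraic relation (\ref{important equality}) is exactly what forces the cross terms involving $\bar{A}(t+\delta),\bar{B}(t+\delta),\bar{C}(t+\delta)$ to collapse. I would then confirm the terminal and initial data: $\Sigma(T)=0$ gives $Y^*(T)=-\Lambda(T)=\xi$, and the prescribed values recover $X^*(t)=0$ on $(T,T+\delta]$ and $Y^*(t)=\varphi(t)$ on $[s-\delta,s)$.

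Finally, for \textbf{uniqueness}, I would take an arbitrary solution $(X^*,Y^*,Z^*)$ of (\ref{stochastic Hamilonian-*}) and set $\Delta(t):=Y^*(t)-\Sigma(t)X^*(t)+\Lambda(t)$. The computation above shows $\Delta$ satisfies a linear homogeneous delayed equation with zero terminal and initial data, and a Gronwall-type estimate of the kind used in Theorem \ref{thm2.1} forces $\Delta\equiv0$; hence every solution obeys (\ref{relations}). Substituting these relations back into the forward dynamics of $X^*$ shows that $S:=X^*+LY^*$ necessarily solves the closed ASDDE (\ref{linear ASDDE}), which has a unique solution by Theorem \ref{thm2.2}. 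Therefore $X^*$, and then $Y^*$ and $Z^*$, are uniquely determined, completing the proof.
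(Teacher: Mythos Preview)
Your existence construction takes a more circuitous route than the paper's and, more importantly, your uniqueness argument has a real gap.

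\textbf{Existence.} The paper does not bring in $L(\cdot)$ or $S(\cdot)$ at all at this stage. It simply takes the process $\bar{X}(\cdot)$ already given by the DABSDE (\ref{linear delayed and time-advanced BSDE}), sets $\bar{Y}:=\Sigma\bar{X}-\Lambda$, and checks by It\^o's formula (using the Riccati equation (\ref{delayed Riccati}), the $\Lambda$–equation in (\ref{linear delayed and time-advanced BSDE}), and the algebraic identity coming from (\ref{important equality}) in the limit $i\to\infty$) that $(\bar{X},\bar{Y},\bar{Z})$ solves (\ref{stochastic Hamilonian-*}). Your two-layer ansatz $Y^*=\Sigma X^*-\Lambda$, $X^*=-LY^*+S$ is the content of Proposition~\ref{pro5.1} \emph{and} Proposition~\ref{pro5.3} combined; it can be made to work, but it forces you to verify a much larger coefficient match, and it obscures that the existence really only needs $(\bar{X},\Lambda,\Gamma)$, not $L$ or $S$.

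\textbf{Uniqueness.} Here your plan breaks down. You claim that for an arbitrary solution, $\Delta:=Y^*-\Sigma X^*+\Lambda$ satisfies a linear homogeneous delayed BSDE, so a Gronwall estimate kills it. But the $\Lambda$–equation in (\ref{linear delayed and time-advanced BSDE}) contains the term
\[
\big[\bar{B}\Sigma|_{t-\delta}\mathcal{M}^{-1}|_{t-\delta}\bar{B}^\top+\bar{C}\mathcal{N}^{-1}|_{t-\delta}\bar{C}^\top\big]\big(\mathbb{E}^{\mathcal{F}_{t-\delta}}[\bar{X}(t)]-\bar{X}(t)\big),
\]
with the \emph{specific} $\bar{X}$ from (\ref{linear delayed and time-advanced BSDE}), while the analogous computation for $\tilde{\Lambda}:=\Sigma X^*-Y^*$ produces the same expression with $X^*$ in place of $\bar{X}$. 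Hence $d\Delta$ retains a term proportional to $\mathbb{E}^{\mathcal{F}_{t-\delta}}[X^*-\bar{X}]-(X^*-\bar{X})$: the equation for $\Delta$ is \emph{not closed}, and no Gronwall argument on $\Delta$ alone can conclude. You would be forced back onto uniqueness for the full coupled DABSDE, which the paper explicitly leaves open.

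The paper avoids this entirely by a duality argument: for two solutions, write $(\hat{X},\hat{Y},\hat{Z})$ for the difference, apply It\^o to $\langle\hat{X},\hat{Y}\rangle$ and use the shift identities (valid because $\bar{A}=\bar{B}=\bar{C}=0$ on $[s,s+\delta]$) to obtain
\[
2\langle G\hat{Y}(s),\hat{Y}(s)\rangle+\mathbb{E}\!\int_s^T\!\Big[2\langle(Q{+}\bar{Q}|_{t+\delta})\hat{Y},\hat{Y}\rangle+2\langle(R{+}\bar{R}|_{t+\delta})\hat{Z},\hat{Z}\rangle+\big\langle\mathcal{N}^{-1}\zeta,\zeta\big\rangle\Big]dt=0,
\]
with $\zeta:=C^\top\hat{X}+(\bar{C}^\top\hat{X})|_{t+\delta}$. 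Sign conditions on $G$, $Q+\bar{Q}|_{t+\delta}$, $R+\bar{R}|_{t+\delta}$, $\mathcal{N}$ force $\zeta\equiv0$, which decouples the backward equation for $\hat{Y}$ into a genuinely homogeneous linear delayed BSDE with zero data; then $\hat{Y}=\hat{Z}=0$ and finally $\hat{X}=0$. This energy identity is the missing ingredient in your proposal.
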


\begin{proof}
For the existence of solutions to (\ref{stochastic Hamilonian-*}), define $\bar{Y}(t):=\Sigma(t)\bar{X}(t)-\Lambda(t)$ for $t\in[s,T]$, noting (\ref{important equality}) and Proposition \ref{pro4.7}, we get
\begin{equation*}\begin{aligned}
  &-\Sigma(t)\mathbb{E}^{\mathcal{F}_t}\big[(\bar{A}^\top\bar{X})|_{t+\delta}\big]
   =-\lim\limits_{i\rightarrow\infty}\Sigma_i(t)\mathbb{E}^{\mathcal{F}_t}\big[(\bar{A}^\top\bar{X})|_{t+\delta}\big]\\
  &=\lim\limits_{i\rightarrow\infty}\Big\{\Sigma_i(t)P_i(t)B(t)\mathcal{R}_i^{-1}(t)\mathbb{E}^{\mathcal{F}_t}\big[(\bar{B}^\top\bar{X})|_{t+\delta}\big]
   +\Sigma_i(t)P_i(t)C(t)\mathcal{N}^{-1}(t)\mathbb{E}^{\mathcal{F}_t}\big[(\bar{C}^\top\bar{X})|_{t+\delta}\big]\Big\}\\
  &=B(t)\Sigma(t)\mathcal{M}^{-1}(t)\mathbb{E}^{\mathcal{F}_t}\big[(\bar{B}^\top\bar{X})|_{t+\delta}\big]
   +C(t)\mathcal{N}^{-1}(t)\mathbb{E}^{\mathcal{F}_t}\big[(\bar{C}^\top\bar{X})|_{t+\delta}\big],
\end{aligned}\end{equation*}
and similarly
\begin{equation*}\begin{aligned}
  &-\bar{A}(t)\Sigma(t-\delta)\bar{X}(t-\delta)\\
  &=\bar{B}(t)\Sigma(t-\delta)\mathcal{M}^{-1}(t-\delta) B(t-\delta)^\top\bar{X}(t-\delta)+\bar{C}(t)\mathcal{N}^{-1}(t-\delta)C(t-\delta)^\top\bar{X}(t-\delta).
\end{aligned}\end{equation*}
Applying It\^o's formula, we obtain
\begin{equation*}\begin{aligned}
  -d\bar{Y}(t)=&\Big\{A(t)\bar{Y}(t)+\bar{A}(t)\bar{Y}(t-\delta)+B(t)\bar{Z}(t)+\bar{B}(t)\bar{Z}(t-\delta)\\
               &\ +C(t)\mathcal{N}^{-1}(t)\big\{C(t)^\top\bar{X}(t)+\mathbb{E}^{\mathcal{F}_t}\big[(\bar{C}^\top\bar{X})|_{t+\delta}\big]\big\}
                +\bar{C}(t)\mathcal{N}^{-1}(t-\delta)\\
               &\ \times\big\{C(t-\delta)^\top\bar{X}(t-\delta)+\mathbb{E}^{\mathcal{F}_{t-\delta}}\big[\bar{C}(t)^\top\bar{X}(t)\big]\big\}\Big\}dt
                -\bar{Z}(t)dW(t),\\
\end{aligned}\end{equation*}
where
\begin{equation}\begin{aligned}\label{bar Z}
  \bar{Z}(t)=&\ \Sigma(t)\mathcal{M}^{-1}(t)\big\{B(t)^\top\bar{X}(t)+\mathbb{E}^{\mathcal{F}_t}\big[(\bar{B}^\top\bar{X})|_{t+\delta}\big]\big\}\\
             &-\big[2\Sigma(t)(R(t)+\bar{R}(t+\delta))+I\big]^{-1}\Gamma(t).
\end{aligned}\end{equation}
Substituting $\bar{Y}(t)=\Sigma(t)\bar{X}(t)-\Lambda(t)$ for $t\in[s,T]$ and (\ref{bar Z}) into (\ref{linear delayed and time-advanced BSDE}), and letting $\bar{Y}(s)=-(I+2\Sigma(s)G)^{-1}\Lambda(s)$, it follows that
\begin{equation}\left\{\begin{aligned}\label{bar X---}
  d\bar{X}(t)=&\big\{A(t)^\top\bar{X}(t)-2(Q(t)+\bar{Q}(t+\delta))\bar{Y}(t)+\mathbb{E}^{\mathcal{F}_t}\big[(\bar{A}^\top\bar{X})|_{t+\delta}\big]\big\}dt\\
              &+\big\{B(t)^\top\bar{X}(t)-2(R(t)+\bar{R}(t+\delta))\bar{Z}(t)+\mathbb{E}^{\mathcal{F}_t}\big[(\bar{B}^\top\bar{X})|_{t+\delta}\big]\big\}dW(t),\quad t\in[s,T],\\
   \bar{X}(s)=&-2G\bar{Y}(s),\ \bar{X}(t)=0,\quad t\in(T,T+\delta].
\end{aligned}\right.\end{equation}

Let $\bar{Y}(t)=\varphi(t),\bar{Z}(t)=\psi(t)$ for $t\in[s-\delta,s)$, noting $\bar{Y}(T)=\Sigma(T)\bar{X}(T)-\Lambda(T)=\xi$, $(\bar{X}(\cdot),\bar{Y}(\cdot),\bar{Z}(\cdot))$ is a solution to the stochastic Hamiltonian system (\ref{stochastic Hamilonian-*}) and the relationships in (\ref{relations}) hold.

As for the uniqueness of solutions to (\ref{stochastic Hamilonian-*}). Suppose $(X_1(\cdot),Y_1(\cdot),Z_1(\cdot))$, $(X_2(\cdot),Y_2(\cdot),Z_2(\cdot))$ are two solutions to (\ref{stochastic Hamilonian-*}). Denote $\hat{X}(\cdot)=X_1(\cdot)-X_2(\cdot)$, $\hat{Y}(\cdot)=Y_1(\cdot)-Y_2(\cdot)$, $\hat{Z}(\cdot)=Z_1(\cdot)-Z_2(\cdot)$, then $(\hat{X}(\cdot),\hat{Y}(\cdot),\hat{Z}(\cdot))$ is a solution to the following:
\begin{equation}\left\{\begin{aligned}\label{stochastic Hamilonian---}
  d\hat{X}(t)=&\big\{A(t)^\top\hat{X}(t)-2(Q(t)+\bar{Q}(t+\delta))\hat{Y}(t)+\mathbb{E}^{\mathcal{F}_t}\big[(\bar{A}^\top\hat{X})|_{t+\delta}\big]\big\}dt\\
              &+\big\{B(t)^\top\hat{X}(t)-2(R(t)+\bar{R}(t+\delta))\hat{Z}(t)+\mathbb{E}^{\mathcal{F}_t}\big[(\bar{B}^\top\hat{X})|_{t+\delta}\big]\big\}dW(t),\\
 -d\hat{Y}(t)=&\Big\{A(t)\hat{Y}(t)+\bar{A}(t)\hat{Y}(t-\delta)+B(t)\hat{Z}(t)+\bar{B}(t)\hat{Z}(t-\delta)\\
              &\ +C(t)\mathcal{N}^{-1}(t)\big\{C(t)^\top\hat{X}(t)+\mathbb{E}^{\mathcal{F}_t}\big[(\bar{C}^\top\hat{X})|_{t+\delta}\big]\big\}\\
              &\ +\bar{C}(t)\mathcal{N}^{-1}(t-\delta)\big\{C(t-\delta)^\top\hat{X}(t-\delta)+\mathbb{E}^{\mathcal{F}_{t-\delta}}\big[\bar{C}(t)^\top\hat{X}(t)\big]\big\}\Big\}dt\\
              &-\hat{Z}(t)dW(t),\quad t\in[s,T],\\
   \hat{X}(s)=&-2G\hat{Y}(s),\ \hat{X}(t)=0,\quad t\in(T,T+\delta],\\
   \hat{Y}(T)=&\ 0,\ \hat{Y}(t)=0,\hat{Z}(t)=0,\quad t\in[s-\delta,s).
\end{aligned}\right.\end{equation}
Since $\bar{C}(t)=0$ for $t\in[s,s+\delta]$, we have
\begin{equation*}\begin{aligned}
  &\mathbb{E}\int_s^T\big\langle\hat{X}(t),C(t)\mathcal{N}^{-1}(t)\mathbb{E}^{\mathcal{F}_t}\big[(\bar{C}^\top\hat{X})|_{t+\delta}\big]\big\rangle dt\\
  &\quad=\mathbb{E}\int_{s}^{T}\big\langle\hat{X}(t-\delta),C(t-\delta)\mathcal{N}^{-1}(t-\delta)\bar{C}(t)^\top\hat{X}(t)\big\rangle dt,\\
  &\mathbb{E}\int_s^T\big\langle\hat{X}(t),\bar{A}(t)\hat{Y}(t-\delta)\big\rangle dt
   =\mathbb{E}\int_s^T\big\langle\bar{A}(t+\delta)^\top\hat{X}(t+\delta),\hat{Y}(t)\big\rangle dt,\\
  &\mathbb{E}\int_s^T\big\langle\hat{X}(t),\bar{B}(t)\hat{Z}(t-\delta)\big\rangle dt
   =\mathbb{E}\int_s^T\big\langle\bar{B}(t+\delta)^\top\hat{X}(t+\delta),\hat{Z}(t)\big\rangle dt,\\
  &\mathbb{E}\int_s^T\big\langle\hat{X}(t),\bar{C}(t)\mathcal{N}^{-1}(t-\delta)\bar{C}(t)^\top\hat{X}(t)\big\rangle dt\\
  &\quad=\mathbb{E}\int_{s}^{T}\big\langle\bar{C}(t+\delta)^\top\hat{X}(t+\delta),\mathcal{N}^{-1}(t)\bar{C}(t+\delta)^\top\hat{X}(t+\delta)\big\rangle dt.
\end{aligned}\end{equation*}
Applying It\^o's formula to $\langle\hat{X}(\cdot),\hat{Y}(\cdot)\rangle$, we derive
\begin{equation*}\begin{aligned}
  &2\langle G\hat{Y}(s),\hat{Y}(s)\rangle
=\mathbb{E}\int_s^T\Big[-2\big\langle(Q(t)+\bar{Q}(t+\delta))\hat{Y}(t),\hat{Y}(t)\big\rangle\\
&\quad-\big\langle\hat{X}(t),C(t)\mathcal{N}^{-1}(t)C(t)^\top\hat{X}(t)+\bar{C}(t)\mathcal{N}^{-1}(t-\delta)\bar{C}(t)^\top\hat{X}(t)\\
&\quad+2C(t)\mathcal{N}^{-1}(t)\bar{C}(t+\delta)^\top\hat{X}(t+\delta)\big\rangle-2\big\langle\hat{Z}(t),(R(t)+\bar{R}(t+\delta))\hat{Z}(t)\big\rangle\Big]dt\\
&=\mathbb{E}\int_s^T\Big[-2\big\langle(Q(t)+\bar{Q}(t+\delta))\hat{Y}(t),\hat{Y}(t)\big\rangle-2\big\langle\hat{Z}(t),(R(t)+\bar{R}(t+\delta))\hat{Z}(t)\big\rangle\\
&\qquad-\big\langle\mathcal{N}^{-1}(t)[C(t)^\top\hat{X}(t)+\bar{C}(t+\delta)^\top\hat{X}(t+\delta)],C(t)^\top\hat{X}(t)+\bar{C}(t+\delta)^\top\hat{X}(t+\delta)\big\rangle\Big]dt.
\end{aligned}\end{equation*}
Due to $R(\cdot)+\bar{R}(\cdot+\delta)\geq 0$, $Q(\cdot)+\bar{Q}(\cdot+\delta)\geq 0$, $G\geq 0$ and $\mathcal{N}(\cdot)>0$, it follows that
\begin{equation}\label{eq5.9}
  C(t)^\top\hat{X}(t)+\bar{C}(t+\delta)^\top\hat{X}(t+\delta)=0,\quad a.e.\ t\in[s,T],\ \mathbb{P}\mbox{-}a.s.
\end{equation}
Substituting (\ref{eq5.9}) into (\ref{stochastic Hamilonian---}), we obtain
\begin{equation}\left\{\begin{aligned}\label{eq5.10}
  -d\hat{Y}(t)=&\big[A(t)\hat{Y}(t)+\bar{A}(t)\hat{Y}(t-\delta)+B(t)\hat{Z}(t)+\bar{B}(t)\hat{Z}(t-\delta)\big]dt\\
               &-\hat{Z}(t)dW(t),\quad t\in[s,T],\\
    \hat{Y}(T)=&\ 0,\ \hat{Y}(t)=0,\ \hat{Z}(t)=0,\quad t\in[s-\delta,s).
\end{aligned}\right.\end{equation}
By the uniqueness of solutions to the linear delayed BSDE (\ref{eq5.10}), we have $(\hat{Y}(\cdot),\hat{Z}(\cdot))\equiv(0,0)$. Substituting it into (\ref{stochastic Hamilonian---}), we deduce $\hat{X}(\cdot)\equiv 0$. Hence the proof is completed.
\end{proof}

\begin{mypro}\label{pro5.2}
Let $(X^*(\cdot),Y^*(\cdot),Z^*(\cdot))$ be the solution to the stochastic Hamiltonian system (\ref{stochastic Hamilonian-*}), suppose $\bar{A}(t)=\bar{B}(t)=\bar{C}(t)=0$ for $t\in[s,s+\delta]$ and $\bar{Q}(t)=\bar{R}(t)=\bar{N}(t)=0$ for $t\in[T,T+\delta]$. Let
\begin{equation}\label{optimal control---}
  u^*(t):=\mathcal{N}^{-1}(t)\big\{C(t)^\top X^*(t)+\mathbb{E}^{\mathcal{F}_t}\big[(\bar{C}^\top X^*)|_{t+\delta}\big]\big\}.
\end{equation}
Then $(Y^*(\cdot),Z^*(\cdot))$ is the solution to the state equation (\ref{controlled delayed BSDE}) associated with the control $u^*(\cdot)$. Furthermore, the corresponding cost can be expressed as
\begin{equation}\begin{aligned}\label{optimal cost---}
  &J(s,\xi;u^*(\cdot))=\mathbb{E}\bigg\{\big\langle\bar{G}\varphi(s-\delta),\varphi(s-\delta)\big\rangle+\big\langle\Lambda(s),(I+2\Sigma(s)G)^{-1}G\Lambda(s)\big\rangle\\
  &\quad+\int_{s-\delta}^s\Big[\big\langle\bar{Q}(t+\delta)\varphi(t),\varphi(t)\big\rangle+\big\langle\bar{R}(t+\delta)\psi(t),\psi(t)\big\rangle
   +\big\langle\bar{N}(t+\delta)\eta(t),\eta(t)\big\rangle\Big]dt\\
  &\quad+\int_s^T\Big[\big\langle(R(t)+\bar{R}(t+\delta))\big[2\Sigma(t)(R(t)+\bar{R}(t+\delta))+I\big]^{-1}\Gamma(t),\Gamma(t)\big\rangle\\
  &\qquad+\big\langle(Q(t)+\bar{Q}(t+\delta))\Lambda(t),\Lambda(t)\big\rangle\Big]dt\bigg\}.
\end{aligned}\end{equation}
\end{mypro}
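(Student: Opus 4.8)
The plan is to verify the two assertions in turn: the first by direct substitution into (\ref{stochastic Hamilonian-*}), the second by two successive applications of It\^o's formula.

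\emph{Verifying the state trajectory.} First I would substitute the definition (\ref{optimal control---}) of $u^*$ into the $Y^*$-equation of the stochastic Hamiltonian system (\ref{stochastic Hamilonian-*}). By the tower property one has $u^*(t-\delta)=\mathcal{N}^{-1}(t-\delta)\{C(t-\delta)^\top X^*(t-\delta)+\mathbb{E}^{\mathcal{F}_{t-\delta}}[\bar{C}(t)^\top X^*(t)]\}$, so that $C(t)u^*(t)+\bar{C}(t)u^*(t-\delta)$ reproduces exactly the two $\mathcal{N}^{-1}$-terms in the drift of $-dY^*$. For $t\in[s,s+\delta)$ the term $\bar{C}(t)u^*(t-\delta)$ is absent because $\bar{C}(t)=0$ there, which removes any dependence on the (undefined) values of $u^*$ on $[s-\delta,s)$; together with the data $Y^*(t)=\varphi(t),Z^*(t)=\psi(t)$ on $[s-\delta,s)$ and $Y^*(T)=\xi$ this shows $(Y^*,Z^*)$ solves (\ref{controlled delayed BSDE}) driven by $u^*$, and by the uniqueness in Theorem \ref{thm2.1} it is the state trajectory. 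Admissibility $u^*\in\mathcal{U}[s,T]$ follows from $X^*\in L^2_{\mathcal{F}}(\Omega;C([s,T];\mathbf{R}^n))$, the boundedness of $\mathcal{N}^{-1}$ (since $\mathcal{N}\geq 2\alpha I$ by \textbf{(A3)}), and the adaptedness of the conditional expectation.

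\emph{Reducing the cost.} For the cost I would first apply the time-shift identities, together with $\bar{Q}(t)=\bar{R}(t)=\bar{N}(t)=0$ on $[T,T+\delta]$ and the history data on $[s-\delta,s)$, to rewrite every delayed quadratic term; this turns $J(s,\xi;u^*)$ into $\mathbb{E}\langle\bar{G}\varphi(s-\delta),\varphi(s-\delta)\rangle$, the history integral $\int_{s-\delta}^s[\langle\bar{Q}(t+\delta)\varphi,\varphi\rangle+\langle\bar{R}(t+\delta)\psi,\psi\rangle+\langle\bar{N}(t+\delta)\eta,\eta\rangle]dt$, the boundary term $\langle GY^*(s),Y^*(s)\rangle$, and a current-time integral of $\langle(Q+\bar{Q}(t+\delta))Y^*,Y^*\rangle+\langle(R+\bar{R}(t+\delta))Z^*,Z^*\rangle+\langle(N+\bar{N}(t+\delta))u^*,u^*\rangle$. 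Next I would apply It\^o's formula to $\langle X^*(t),Y^*(t)\rangle$ on $[s,T]$: using $\langle A^\top X^*,Y^*\rangle=\langle X^*,AY^*\rangle$, $\langle B^\top X^*,Z^*\rangle=\langle X^*,BZ^*\rangle$, the shift identity $\mathbb{E}\int_s^T\langle f(t),\mathbb{E}^{\mathcal{F}_t}[g(t+\delta)]\rangle dt=\mathbb{E}\int_s^T\langle f(t-\delta),g(t)\rangle dt$ (with edge terms killed by $\bar{A}=\bar{B}=0$ on $[s,s+\delta]$ and $X^*=0$ on $(T,T+\delta]$) to cancel the advanced terms against $\bar{A}Y^*(t-\delta),\bar{B}Z^*(t-\delta)$, and the definition of $u^*$ to collapse $\langle X^*,Cu^*+\bar{C}u^*(t-\delta)\rangle$ into $\langle\mathcal{N}u^*,u^*\rangle$, I obtain the identity $\mathbb{E}\int_s^T[2\langle(Q+\bar{Q}(t+\delta))Y^*,Y^*\rangle+2\langle(R+\bar{R}(t+\delta))Z^*,Z^*\rangle+\langle\mathcal{N}u^*,u^*\rangle]dt=\mathbb{E}[\langle X^*(s),Y^*(s)\rangle-\langle X^*(T),Y^*(T)\rangle]$. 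Note this step uses only the Hamiltonian system and the formula for $u^*$, not the relations (\ref{relations}).

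\emph{Evaluating the terminal pairing.} Combining the last identity with $X^*(s)=-2GY^*(s)$ and $Y^*(T)=\xi=-\Lambda(T)$ collapses the entire state-dependent part of $J$ to $\tfrac12\mathbb{E}\langle X^*(T),\Lambda(T)\rangle$. I would then compute this by a second application of It\^o's formula, to $\langle X^*(t),\Lambda(t)\rangle$ on $[s,T]$, using the $X^*$-dynamics of (\ref{stochastic Hamilonian-*}) and the $\Lambda$-dynamics of (\ref{linear delayed and time-advanced BSDE}). Here the relations (\ref{relations}), $Y^*=\Sigma X^*-\Lambda$ and the stated expression for $Z^*$, are substituted so that the $A$-terms and the $\Sigma(Q+\bar{Q}(t+\delta))$-terms cancel, while the $\Gamma$-terms (including the quadratic-covariation contribution pairing the diffusion coefficient of $X^*$ with $\Gamma$) reorganize into $\langle(R+\bar{R}(t+\delta))[2\Sigma(R+\bar{R}(t+\delta))+I]^{-1}\Gamma,\Gamma\rangle$; the delayed/advanced terms again cancel under the shift identity. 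What survives is $\int_s^T[\langle(Q+\bar{Q}(t+\delta))\Lambda,\Lambda\rangle+\langle(R+\bar{R}(t+\delta))[2\Sigma(R+\bar{R}(t+\delta))+I]^{-1}\Gamma,\Gamma\rangle]dt$, and the boundary term, evaluated through $X^*(s)=2G(I+2\Sigma(s)G)^{-1}\Lambda(s)$, yields $\langle\Lambda(s),(I+2\Sigma(s)G)^{-1}G\Lambda(s)\rangle$. Assembling all pieces gives exactly (\ref{optimal cost---}), which coincides with the lower bound (\ref{look for lower bound-3}) and hence proves optimality.

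The main obstacle is the second It\^o computation on $\langle X^*,\Lambda\rangle$: it carries a large number of advanced ($\mathbb{E}^{\mathcal{F}_t}[\,\cdot\,|_{t+\delta}]$) and delayed ($\cdot|_{t-\delta}$) cross terms, and the cancellations hinge on correctly pairing each advanced term with its delayed counterpart under the shift identity and on inserting the Proposition \ref{pro5.1} relations in precisely the right places so that the $\Sigma$- and $\mathcal{M}$-weighted terms collapse to the stated quadratic forms. This is bookkeeping rather than a conceptual difficulty. One could alternatively read the result off the completion-of-squares identity (\ref{look for lower bound-1}) by checking that its three squared terms vanish for $u^*$; however that route forces the delicate passage $i\to\infty$ with $P_i=\Sigma_i^{-1}$ blowing up near $T$, so I prefer the direct computation above.
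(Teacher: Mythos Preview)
Your proposal is correct and follows essentially the same route as the paper: verify the state equation by direct substitution, then apply It\^o's formula first to $\langle X^*,Y^*\rangle$ (combined with the time-shift identities for the delayed weights) to reduce the cost to $-\tfrac12\mathbb{E}\langle X^*(T),\xi\rangle$ plus the history terms, and then to $\langle X^*,\Lambda\rangle$, inserting the relations (\ref{relations}) from Proposition~\ref{pro5.1}, to produce (\ref{optimal cost---}). The only differences from the paper are cosmetic---you perform the time shifts before the first It\^o step rather than after, and you make the admissibility check and the role of $\bar C=0$ on $[s,s+\delta]$ explicit---but the two It\^o computations and the use of (\ref{relations}) are identical.
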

\begin{proof}
The first conclusion is easy to verify and we mainly prove the second conclusion. Applying It\^o's formula to $\langle X^*(\cdot),Y^*(\cdot)\rangle$, we have
\begin{equation*}\begin{aligned}
  &\frac{1}{2}\mathbb{E}\langle X^*(T),\xi\rangle+\mathbb{E}\langle GY^*(s),Y^*(s)\rangle
   =\frac{1}{2}\mathbb{E}\int_s^T\Big[-2\big\langle Y^*(t),(Q(t)+\bar{Q}(t+\delta))Y^*(t)\big\rangle\\
  &\quad-\big\langle X^*(t),C(t)\mathcal{N}^{-1}(t)C(t)^\top X^*(t)+C(t)\mathcal{N}^{-1}(t)\bar{C}(t+\delta)^\top X^*(t+\delta)\\
  &\quad+\bar{C}(t)\mathcal{N}^{-1}(t-\delta)C(t-\delta)^\top X^*(t-\delta)+\bar{C}(t)\mathcal{N}^{-1}(t-\delta)\bar{C}(t)^\top X^*(t)\big\rangle\\
  &\quad-2\big\langle Z^*(t),(R(t)+\bar{R}(t+\delta))Z^*(t)\big\rangle\Big]dt.
\end{aligned}\end{equation*}
Hence the corresponding cost functional associated with the control $u^*(\cdot)$ becomes
\begin{eqnarray*}\begin{aligned}
&J(s,\xi;u^*(\cdot))=\mathbb{E}\bigg\{\big\langle\bar{G}Y^*(s-\delta),Y^*(s-\delta)\big\rangle-\frac{1}{2}\big\langle X^*(T),\xi\big\rangle\\
&\ +\int_s^T\Big[-\big\langle Y^*(t),(Q(t)+\bar{Q}(t+\delta))Y^*(t)\big\rangle-\frac{1}{2}\big\langle X^*(t),C(t)\mathcal{N}^{-1}(t)C(t)^\top X^*(t)\\
&\qquad+\frac{1}{2}C(t)\mathcal{N}^{-1}(t)\bar{C}(t+\delta)^\top X^*(t+\delta)+\frac{1}{2}\bar{C}(t)\mathcal{N}^{-1}(t-\delta)C(t-\delta)^\top X^*(t-\delta)\\
&\qquad+\frac{1}{2}\bar{C}(t)\mathcal{N}^{-1}(t-\delta)\bar{C}(t)^\top X^*(t)\big\rangle-\big\langle Z^*(t),(R(t)+\bar{R}(t+\delta))Z^*(t)\big\rangle\\
\end{aligned}\end{eqnarray*}\begin{eqnarray*}\begin{aligned}
&\qquad+\big\langle Q(t)Y^*(t),Y^*(t)\big\rangle+\big\langle\bar{Q}(t)Y^*(t-\delta),Y^*(t-\delta)]\big\rangle+\big\langle R(t)Z^*(t),Z^*(t)\big\rangle\\
&\qquad+\big\langle\bar{R}(t)Z^*(t-\delta),Z^*(t-\delta)\big\rangle+\big\langle N(t)u^*(t),u^*(t)\big\rangle
 +\big\langle\bar{N}(t)u^*(t-\delta),u^*(t-\delta)\big\rangle\Big]dt\bigg\}.
\end{aligned}\end{eqnarray*}
Since $\bar{Q}(t)=\bar{R}(t)=\bar{N}(t)=0$ for $t\in[T,T+\delta]$, we get
\begin{equation*}\begin{aligned}
  &\mathbb{E}\int_s^T\big\langle\bar{Q}(t)Y^*(t-\delta),Y^*(t-\delta)\big\rangle dt\\
&=\mathbb{E}\int_{s-\delta}^s\big\langle\bar{Q}(t+\delta)\varphi(t),\varphi(t)\big\rangle dt+\mathbb{E}\int_s^T\big\langle\bar{Q}(t+\delta)Y^*(t),Y^*(t)\big\rangle dt,\\
&\mathbb{E}\int_s^T\big\langle\bar{R}(t)Z^*(t-\delta),Z^*(t-\delta)\big\rangle dt\\
&=\mathbb{E}\int_{s-\delta}^s\big\langle\bar{R}(t+\delta)\psi(t),\psi(t)\big\rangle dt+\mathbb{E}\int_s^T\big\langle\bar{R}(t+\delta)Z^*(t),Z^*(t)\big\rangle dt,\\
&\mathbb{E}\int_s^T\big\langle\bar{N}(t)u^*(t-\delta),u^*(t-\delta)\big\rangle dt\\
&=\mathbb{E}\int_{s-\delta}^s\big\langle\bar{N}(t+\delta)\eta(t),\eta(t)\big\rangle dt+\mathbb{E}\int_s^T\big\langle\bar{N}(t+\delta)u^*(t),u^*(t)\big\rangle dt.
\end{aligned}\end{equation*}
Noting $\bar{C}(t)=0$ for $t\in[s,s+\delta]$, we have
\begin{equation*}\begin{aligned}
  &\mathbb{E}\int_s^T\big\langle X^*(t),\bar{C}(t)\mathcal{N}^{-1}(t-\delta)C(t-\delta)^\top X^*(t-\delta)\big\rangle dt\\
&=\mathbb{E}\int_s^T\big\langle\bar{C}(t+\delta)^\top X^*(t+\delta),\mathcal{N}^{-1}(t)C(t)^\top X^*(t)\big\rangle dt.\\
&\mathbb{E}\int_s^T\big\langle X^*(t),\bar{C}(t)\mathcal{N}^{-1}(t-\delta)\bar{C}(t)^\top X^*(t)\big\rangle dt\\
&=\mathbb{E}\int_s^T\big\langle\bar{C}(t+\delta)^\top X^*(t+\delta),\mathcal{N}^{-1}(t)\bar{C}(t+\delta)^\top X^*(t+\delta)\big\rangle dt.
\end{aligned}\end{equation*}
Thus it follows that
\begin{equation}\begin{aligned}\label{eq5.13}
  J(s,\xi;u^*(\cdot))=&\ \mathbb{E}\Big\{\big\langle\bar{G}Y^*(s-\delta),Y^*(s-\delta)\big\rangle
   -\frac{1}{2}\big\langle X^*(T),\xi\big\rangle+\int_{s-\delta}^s\Big[\big\langle\bar{Q}(t+\delta)\varphi(t),\varphi(t)\big\rangle\\
  &\quad+\big\langle\bar{R}(t+\delta)\psi(t),\psi(t)\big\rangle+\big\langle\bar{N}(t+\delta)\eta(t),\eta(t)\big\rangle\Big]dt\Big\}.
\end{aligned}\end{equation}
Next applying It\^o's formula to $\langle X^*(\cdot),\Lambda(\cdot)\rangle$, we obtain
\begin{equation}\begin{aligned}\label{eq5.14}
 &-\mathbb{E}\langle X^*(T),\xi\rangle
=\mathbb{E}\langle X^*(s),\Lambda(s)\rangle+\mathbb{E}\int_s^T\Big\{\big\langle\Lambda(t),2(Q(t)+\bar{Q}(t+\delta))(-Y^*(t)\\
 &\quad+\Sigma(t)X^*(t))\big\rangle+\big\langle X^*(t),-B(t)\big[2\Sigma(t)(R(t)+\bar{R}(t+\delta))+I\big]^{-1}\Gamma(t)\\
 &\quad-\bar{B}(t)\big[2\Sigma(t-\delta)(R(t-\delta)+\bar{R}(t))+I\big]^{-1}\Gamma(t-\delta)\big\rangle\\
 &\quad+\big\langle\Gamma(t),B(t)^\top X^*(t)+\bar{B}(t+\delta)^\top X^*(t+\delta)-2(R(t)+\bar{R}(t+\delta))Z^*(t)\big\rangle\Big\}dt.
\end{aligned}\end{equation}
Substituting (\ref{relations}) and (\ref{eq5.14}) into (\ref{eq5.13}), we derive (\ref{optimal cost---}). The proof is complete.
\end{proof}

Next we give the proof of Theorem \ref{thm3.2}.
\begin{proof}
The existence and uniqueness of the solution to the stochastic Hamiltonian system (\ref{stochastic Hamiltonian}), has been proved by Proposition \ref{pro5.1}. The optimality of (\ref{optimal control-2}) can be obtained by (\ref{optimal cost---}) associated with the control (\ref{optimal control---}), which is the lower bound of the cost (see (\ref{look for lower bound-3})). Finally we are able to conclude that the control (\ref{optimal control-2}) is unique because {\bf Problem (D-BSLQ)} is a strictly convex optimization problem.
\end{proof}

Now we continue to prove Theorem \ref{thm3.1}. First we give the following proposition.

\begin{mypro}\label{pro5.3}
Let $(X^*(\cdot),Y^*(\cdot),Z^*(\cdot))$ be the solution to the stochastic Hamiltonian system (\ref{stochastic Hamiltonian}) and $L(\cdot)$, $S(\cdot)$ be the solutions to (\ref{Riccati without delay}) and (\ref{linear ASDDE}), respectively. Suppose $\bar{A}(t)=\bar{B}(t)=\bar{C}(t)=0$ for $t\in[s,s+\delta]\cup[T,T+\delta]$, then the following relationship holds:
\begin{equation}\label{relations---}
  X^*(t)=-L(t)Y^*(t)+S(t),\quad t\in[s,T],\ \mathbb{P}\mbox{-}a.s.
\end{equation}
\end{mypro}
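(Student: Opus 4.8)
The plan is to build an explicit candidate for $S(\cdot)$ out of the data already in hand and then invoke uniqueness. By Proposition \ref{pro5.1} the solution of the Hamiltonian system is exactly the triple constructed there from the solution $(\bar X,\Lambda,\Gamma)$ of the DABSDE (\ref{linear delayed and time-advanced BSDE}); in particular $X^*(\cdot)=\bar X(\cdot)$ and the relations (\ref{relations}) hold, so that $Y^*(t)=\Sigma(t)X^*(t)-\Lambda(t)$. I would then set
\[
\widehat S(t):=X^*(t)+L(t)Y^*(t)=\big[I+L(t)\Sigma(t)\big]\bar X(t)-L(t)\Lambda(t),\qquad t\in[s,T],
\]
extended by $\widehat S\equiv0$ on $[s-\delta,s)\cup(T,T+\delta]$ so as to match the boundary data of $S(\cdot)$ in (\ref{linear ASDDE}). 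Since (\ref{linear ASDDE}) is uniquely solvable by Theorem \ref{thm2.2}, it suffices to prove that $\widehat S(\cdot)$ is a solution of (\ref{linear ASDDE}); then $\widehat S=S$, and $X^*=\widehat S-LY^*=-LY^*+S$ is exactly (\ref{relations---}).

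The advantage of the second form $\widehat S=(I+L\Sigma)\bar X-L\Lambda$ is that it involves only the deterministic kernels $\Sigma,L$ and the solution $(\bar X,\Lambda,\Gamma)$ of (\ref{linear delayed and time-advanced BSDE}), so $d\widehat S$ is obtained by the ordinary product rule, with no appearance of $u^*$ and hence no circularity. Computing $d\widehat S$ from the dynamics of $(\bar X,\Lambda)$ and then substituting $\dot\Sigma$ and $\dot L$ from the Riccati equations (\ref{delayed Riccati}) and (\ref{Riccati without delay}) will cancel the blocks that are quadratic in $\Sigma$ and in $L$. The remaining reduction rests on the algebraic identities coming from $S=X^*+LY^*$ and $Y^*=\Sigma X^*-\Lambda$, above all $(I+\Sigma L)^{-1}(\Sigma S-\Lambda)=Y^*=\Sigma\bar X-\Lambda$ together with its shifts at $t\pm\delta$; this is precisely the mechanism by which the $[I+(\Sigma L)|_{t\pm\delta}]^{-1}$-blocks in (\ref{linear ASDDE}) turn $S$-dependence back into $\bar X$-dependence, so that the drift and diffusion of $d\widehat S$ line up coefficient by coefficient with those of (\ref{linear ASDDE}).

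For the boundary condition at $t=s$ one checks directly that with $L(s)=2G$ and $\bar X(s)=2G(I+2\Sigma(s)G)^{-1}\Lambda(s)$ one has $(I+2G\Sigma(s))\,2G\,(I+2\Sigma(s)G)^{-1}=2G$, whence $\widehat S(s)=2G\Lambda(s)-2G\Lambda(s)=0=S(s)$. The standing hypotheses $\bar A=\bar B=\bar C=0$ on $[s,s+\delta]\cup[T,T+\delta]$ together with the key identity (\ref{important equality}) are used to shift the delayed and advanced integrals and the conditional expectations $\mathbb E^{\mathcal F_t}[(\cdot)|_{t+\delta}]$, and to ensure that the advanced terms reaching into $(T,T+\delta]$ vanish; this is what makes the coefficient matching on $[s,T]$ exact rather than only approximate.

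I expect the main obstacle to be the bookkeeping in this matching and nothing more conceptual: $d\widehat S$ generates a large collection of current, delayed ($t-\delta$), and advanced ($t+\delta$) terms, and one must verify that, after the two Riccati substitutions, they collapse \emph{exactly} onto the rather long right-hand side of (\ref{linear ASDDE}). The two delicate points are keeping the conditional expectations $\mathbb E^{\mathcal F_t}[(\cdot)|_{t+\delta}]$ correctly aligned under the filtration shifts, and checking that every $\Sigma$- and $L$-quadratic block is annihilated by the corresponding Riccati right-hand side. Once the matching and the boundary data are confirmed, the unique solvability of (\ref{linear ASDDE}) (Theorem \ref{thm2.2}) forces $\widehat S=S$ on $[s,T]$, and substituting back yields $X^*(t)=-L(t)Y^*(t)+S(t)$ for $t\in[s,T]$, $\mathbb P$-a.s.
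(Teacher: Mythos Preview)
Your proposal is correct and amounts to the same computation as the paper's, only organized differently. The paper first writes down an auxiliary equation (call it (\ref{S})) which is nothing but the dynamics of $X^*+LY^*$ with the $Y^*(t\pm\delta)$-dependence left explicit; it then shows that the difference $-X^*-LY^*+S$ satisfies a \emph{homogeneous} linear ASDDE with zero boundary data, concludes it vanishes by Theorem~\ref{thm2.2}, and only afterwards substitutes $Y^*=(I+\Sigma L)^{-1}(\Sigma S-\Lambda)$ to turn the auxiliary equation into (\ref{linear ASDDE}). You collapse these two steps into one: compute $d\widehat S$ and immediately rewrite the $Y^*(t\pm\delta)$ terms via the same algebraic identity, matching (\ref{linear ASDDE}) directly, then invoke uniqueness of (\ref{linear ASDDE}). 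Both routes rest on the same It\^o computation of $d(LY^*)$ and the same Riccati cancellations; the paper's ordering has the cosmetic advantage that the homogeneous ASDDE for the difference is much shorter to display than the full (\ref{linear ASDDE}), while your ordering avoids introducing an intermediate equation at all. Incidentally, your sign in $(I+\Sigma L)^{-1}(\Sigma S-\Lambda)=Y^*$ is the correct one given $Y^*=\Sigma X^*-\Lambda$; the paper's printed formula has $+\Lambda$, which appears to be a typo.
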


\begin{proof}
Let us assume for the time being that $S(\cdot)$ be the solution to the following equation:
\begin{equation}\left\{\begin{aligned}\label{S}
dS(t)=&\bigg\{\Big\{A(t)^\top-L(t)B(t)\Sigma(t)\mathcal{M}^{-1}(t)B(t)^\top-L(t)C(t)\mathcal{N}^{-1}(t)C(t)^\top\Big\}S(t)\\
      &\quad+\mathbb{E}^{\mathcal{F}_t}\big[(\bar{A}^\top S)|_{t+\delta}\big]-L(t)B(t)\Sigma(t)\mathcal{M}^{-1}(t)\mathbb{E}^{\mathcal{F}_t}\big[(\bar{B}^\top S)|_{t+\delta}\big]\\
      &\quad+L(t)B(t)\big\{[2\Sigma(t)[R(t)+\bar{R}(t+\delta)]+I\big\}^{-1}\Gamma(t)\\
      &\quad-L(t)\bar{B}(t)\Sigma(t-\delta)\mathcal{M}^{-1}(t-\delta)(B^\top S)|_{t-\delta}-L(t)\bar{B}(t)\Sigma(t-\delta)\mathcal{M}^{-1}(t-\delta)\\
      &\quad\times\bar{B}(t)^\top S(t)+L(t)\bar{B}(t)\big\{2\Sigma(t-\delta)[R(t-\delta)+\bar{R}(t)]+I\big\}^{-1}\Gamma(t-\delta)\\
      &\quad-L(t)C(t)\mathcal{N}^{-1}(t)\mathbb{E}^{\mathcal{F}_{t}}\big[(\bar{C}^\top S)|_{t+\delta}\big]-L(t)\bar{C}(t)(\mathcal{N}^{-1}C^\top S)|_{t-\delta}\\
      &\quad-L(t)\bar{C}(t)\mathcal{N}^{-1}(t-\delta)\bar{C}(t)^\top S(t)+\Big\{L(t)B(t)\Sigma(t)\mathcal{M}^{-1}(t)\bar{B}(t+\delta)^\top\\
      &\quad+L(t)C(t)\mathcal{N}^{-1}(t)\bar{C}(t+\delta)^\top-\bar{A}(t+\delta)^\top\Big\}L(t+\delta)\mathbb{E}^{\mathcal{F}_t}\big[Y^*(t+\delta)\big]\\
      &\quad+L(t)\Big\{\bar{B}(t)\Sigma(t-\delta)\mathcal{M}^{-1}(t-\delta)(B^\top L)|_{t-\delta}-\bar{A}(t)\\
      &\quad+\bar{C}(t)(\mathcal{N}^{-1}C^\top L)|_{t-\delta}\Big\}Y^*(t-\delta)-L(t)\big[\bar{B}(t)\Sigma(t-\delta)\mathcal{M}^{-1}(t-\delta)\bar{B}(t)^\top\\
      &\quad+\bar{C}(t)\mathcal{N}^{-1}(t-\delta)\bar{C}(t)^\top\big]\big[\mathbb{E}^{\mathcal{F}_{t-\delta}}(\bar{X}(t))-\bar{X}(t)\big]\bigg\}dt\\
      &+\bigg\{\big[I+L(t)\Sigma(t)\big]\mathcal{M}^{-1}(t)\big\{B(t)^\top S(t)+\mathbb{E}^{\mathcal{F}_t}\big[(\bar{B}^\top S)|_{t+\delta}\big]\big\}\\
      &\qquad-\big[L(t)-2R(t)-2\bar{R}(t+\delta)\big]\big\{2\Sigma(t)[R(t)+\bar{R}(t+\delta)]+I\big\}^{-1}\Gamma(t)\\
      &\qquad-\big[I+L(t)\Sigma(t)\big]\mathcal{M}^{-1}(t)(\bar{B}^\top L)|_{t+\delta}\mathbb{E}^{\mathcal{F}_t}\big[Y^*(t+\delta)\big]\\
      &\qquad-\big[I+L(t)\Sigma(t)\big]\mathcal{M}^{-1}(t)B(t)^\top L(t)Y^*(t)\bigg\}dW(t),\quad t\in[s,T],\\
 S(s)=&\ 0,\ S(t)=0,\quad t\in[s-\delta,s)\cup(T,T+\delta].
\end{aligned}\right.\end{equation}
Note that (\ref{linear ASDDE}) and (\ref{S}) are very similar. In fact, it will be shown later that (\ref{linear ASDDE}) and (\ref{S}) have the same solutions. However, it is easier to deal with (\ref{S}).

Recall we have shown $X^*(\cdot)=\bar{X}(\cdot)$, which satisfies (\ref{linear delayed and time-advanced BSDE}). Applying It\^o's formula to $L(\cdot)Y^*(\cdot)$ and using (\ref{relations}), (\ref{linear delayed and time-advanced BSDE}), (\ref{optimal control---}), we derive
\begin{equation}\begin{aligned}\label{eq5.17}
 &d\big[-X^*(t)-L(t)Y^*(t)+S(t)\big]\\
=&\Big\{\big\{A(t)^\top-L(t)B(t)\Sigma(t)\mathcal{M}^{-1}(t)B(t)^\top-L(t)C(t)\mathcal{N}^{-1}(t)C(t)^\top\\
 &\ -L(t)\bar{B}(t)\Sigma(t-\delta)\mathcal{M}^{-1}(t-\delta)\bar{B}(t)^\top-L(t)\bar{C}(t)\mathcal{N}^{-1}(t-\delta)\bar{C}(t)^\top\big\}\\
 &\ \times\big[-X^*(t)-L(t)Y^*(t)+S(t)\big]\\
 &\ +\big\{-L(t)B(t)\Sigma(t)\mathcal{M}^{-1}(t)\bar{B}(t+\delta)^\top-L(t)C(t)\mathcal{N}^{-1}(t)\bar{C}(t+\delta)^\top+\bar{A}(t+\delta)^\top\big\}\\
 &\ \times\mathbb{E}^{\mathcal{F}_t}\big[(-X^*-LY^*+S)|_{t+\delta}\big]\\
 &\ +\big\{-L(t)\bar{B}(t)\Sigma(t-\delta)\mathcal{M}^{-1}(t-\delta)B(t-\delta)^\top-L(t)\bar{C}(t)\mathcal{N}^{-1}(t-\delta)C(t-\delta)^\top\big\}\\
 &\ \times\big[(-X^*-LY^*+S)|_{t-\delta}\big]\Big\}dt\\
 &+\Big\{(I+L(t)\Sigma(t))\mathcal{M}^{-1}(t)B(t)^\top\big[-X^*(t)-L(t)Y^*(t)+S(t)\big]\\
 &\quad+(I+L(t)\Sigma(t))\mathcal{M}^{-1}(t)\bar{B}(t+\delta)^\top\mathbb{E}^{\mathcal{F}_t}\big[(-X^*-LY^*+S)|_{t+\delta}\big]\Big\}dW(t),
\end{aligned}\end{equation}
and $-X^*(s)-L(s)Y^*(s)+S(s)=2GY^*(s)-2GY^*(s)=0$. Let $X^*(t)=L(t)=0$ for $t\in[s-\delta,s)$, then we have $-X^*(t)-L(t)Y^*(t)+S(t)=0$ for $t\in[s-\delta,s)$. Let $L(t)=0$ for $t\in(T,T+\delta]$, then it yields $-X^*(t)-L(t)Y^*(t)+S(t)=0$ for $t\in(T,T+\delta]$. Thus by the unique solvability of the ASDDE (\ref{eq5.17}), we obtain
\begin{equation}\label{relations---*}
  -X^*(t)-L(t)Y^*(t)+S(t)=0,\quad t\in[s,T],
\end{equation}
where $S(\cdot)$ is the solution to (\ref{S}). Substituting (\ref{relations---*}) into (\ref{relations}), it follows that
\begin{equation}\label{relations---**}
  Y^*(t)=(I+\Sigma(t)L(t))^{-1}(\Sigma(t)S(t)+\Lambda(t)),\quad t\in[s,T].
\end{equation}
Finally, substituting (\ref{relations---**}) into (\ref{S}), it follows that $S(\cdot)$ is the unique solution to (\ref{linear ASDDE}). The proof is complete.
\end{proof}

Finally we give the proof of Theorem \ref{thm3.1}.
\begin{proof}
It apparently follows from Proposition \ref{pro5.2} and Proposition \ref{pro5.3}.
\end{proof}

\section{Concluding Remarks}

In this paper, we have discussed the D-BSLQ optimal control problem. It is, in fact, an optimal control problem where the state equation is a controlled linear delayed BSDE and the cost is a quadratic functional. This kind of optimal control problem has three interesting characteristics worthy of being emphasized. Firstly, it is solved by using the completion-of-squares technique, where the optimal control is a linear feedback of the entire past history and the future state trajectory in a short period of time, and the optimal cost is expressed by a delayed Riccati equation and a DABSDE, which is different from the BSLQ optimal control problem without delay. Secondly, a new class of ASDDEs is introduced to seek the state feedback expression of the optimal control, which has not been studied yet. Thirdly, the existence and uniqueness of the delayed Riccati equations mentioned above are discussed in detail, which have not appeared in the previous literature.

Solvability of the DABSDE (\ref{linear delayed and time-advanced BSDE}) is an open problem. Differential games of delayed BSDEs and {\it forward-backward SDEs} (FBSDEs for short) are well worth studying, considering both Nash and Stackelberg equilibria (\cite{YJ08,WY10,WY12,SW16,XSZ18,WXX18}). The solvability of related delayed Riccati equations are more complicated, and some numerical methods are desirable. We will consider these topics in the future.


\end{document}